%
%
%
%
\documentclass{amsart}

\usepackage{amssymb,color}
\usepackage{amsfonts}
\usepackage{amsmath}
\usepackage{euscript}
\usepackage{enumerate}
\usepackage{pdfsync}
\synctex=1

\newtheorem{theorem}{Theorem}[section]
\newtheorem{lemma}[theorem]{Lemma}
\newtheorem{note}[theorem]{Note}
\newtheorem{prop}[theorem]{Proposition}
\newtheorem{cor}[theorem]{Corollary}

\newtheorem*{Theorem1'}{Theorem 1'}

\theoremstyle{definition}

\theoremstyle{remark}




\setlength{\voffset}{-1 truecm}

\setlength{\textwidth}{15 truecm}

\setlength{\hoffset}{-1 truecm}

\newcommand \Z{{\mathbb Z}}
\newcommand \Q{{\mathbb Q}}
\newcommand \N{{\mathbb N}}

\begin{document}

\title[Structure of the Macdonald groups in one parameter]{Structure of the Macdonald groups in one parameter}

\author{Alexander Montoya Ocampo}
\address{Department of Mathematics and Statistics, University of Regina, Canada}
\email{alexandermontoya1996@gmail.com}

\author{Fernando Szechtman}
\address{Department of Mathematics and Statistics, University of Regina, Canada}
\email{fernando.szechtman@gmail.com}
\thanks{The second author was partially supported by NSERC grant RGPIN-2020-04062}

\subjclass[2020]{20D15, 20D20}



\keywords{Macdonald group, nilpotent group, Sylow subgroup, group extension, deficiency zero}

\begin{abstract} Consider the Macdonald groups $G(\alpha)=\langle A,B\,|\, A^{[A,B]}=A^\alpha,\, B^{[B,A]}=B^\alpha\rangle$, $\alpha\in\Z$.
We fill a gap in Macdonald's proof that $G(\alpha)$ is always nilpotent, and proceed to determine the order, upper and lower central series,
nilpotency class, and exponent of $G(\alpha)$.
\end{abstract}

\maketitle

\section{Introduction}

In 1962, Macdonald \cite{M} investigated the group
$$
G(\beta,\gamma)=\langle A,B\,|\, A^{[A,B]}=A^\beta,\, B^{[B,A]}=B^\gamma\rangle,\quad \beta,\gamma\in\Z.
$$

In this paper, our main object of study is the Macdonald group in one parameter $\alpha\in\Z$, namely the group $G=G(\alpha)=G(\alpha,\alpha)$.
If $\alpha=1$ then $G$ is infinite and isomorphic to the integral Heisenberg group \cite[Chapter 5]{J}.  If $\alpha\neq 1$, then
\cite[Section 4]{M} shows that $|G|$ is finite with the same prime factors as $\alpha-1$. 
In particular, $G(0)$ and $G(2)$ are trivial. For these reasons, we assume for the remainder of the paper that $\alpha\notin\{0,1,2\}$.

Our motivation for this study is manifold. First of all, Macdonald's paper \cite{M} is beautifully written,
and he determined various basic structural properties of the groups
$G(\beta,\gamma)$ through elegant arguments. However, his proof of the nilpotence of $G(\beta,\gamma)$ is incomplete. 
He did show in detail that $G(\beta,\gamma)$ is nilpotent when $\beta=1$, or $\gamma=1$, or $\gcd(\beta-1,6)=1=\gcd(\gamma-1,6)$,
and stated that the remaining cases followed similarly. We beg to differ. Our discrepancy stems from an innocent
looking error, found in \cite[Section 5]{M}, where
it is stated that if $\beta>1$, $\beta=1+3k$, and $k\equiv -1\mod 3$, then for
$\delta_\beta=\beta^\beta-(1+\beta+\cdots+\beta^{\beta-1})$,
one has $v_3((\beta-1)\delta_\beta)=4$. This is only valid if $v_3(k+1)=1$, as seen in Proposition \ref{calc1} below.
If $\beta>1$ and $\gamma>1$, the values of 
$v_p((\beta-1)\delta_\beta)$ and $v_q((\gamma-1)\delta_\gamma)$, 
where $p$ and $q$ are prime factors of $\beta-1$ and $\gamma-1$, respectively,
play an essential role in the structure of $G(\beta,\gamma)$ and in particular in bounding the order and nilpotency class of 
$G(\beta,\gamma)$ (provided nilpotence is first established). As seen in Sections~\ref{secv}~and~\ref{secn},
considerable effort is required to establish the nilpotence of $G(\alpha)$, in Theorem \ref{nul}, especially when $\alpha\equiv 7\mod 9$.

Secondly, Macdonald left open the question of the precise order and class of his groups $G(\beta,\gamma)$
as complicated. 
He came back to this question in \cite{M2}, ten years after the appearance of \cite{M}, with a general computer program that
allowed him, in particular, to determine the order and nilpotency class of the Sylow 3-subgroup of $G(34,7)$, found to be 
$3^{10}$ and 7, respectively,
less than the bounds given in \cite{M}. In this paper, we settle this problem for the groups $G(\alpha)$ in
Theorem \ref{todojunto}.

Thirdly, for $\beta$ and $\gamma$ different from one, $G(\beta,\gamma)$ is a finite group that
admits a finite presentation with as many generators as relations. These groups are called
{\em interesting} by Johnson \cite[Chapter 7]{J}. A more widely used synonym is finite group
with deficiency zero. Such groups have been studied by several authors. The intensity of
this study increased since the discovery by Mennicke \cite{Me} 
of the first family of finite groups requiring 3 generators and 3 relations, in 1959.
He considered the groups $M(a,b,c)=\langle x,y,z\,|\, x^y=x^a, y^{z}=y^b, z^x=z^c\rangle$ and  proved them
to be finite when $a=b=c\geq 2$. Note the similarity between the presentations of the trivial group $M(2,2,2)$ and 
Higman's infinite group \cite{H} from 1951. It is easy to see that $M(a,b,c)$ does require 3 generators whenever $a-1,b-1,c-1$ share a prime factor. The problem of the finiteness of the general Mennicke groups
$M(a,b,c)$ was studied by Macdonald and Wamsley \cite{W}, Schenkman~\cite{S}, and Jabara~\cite{Ja}. A sufficient condition
is that $|a|,|b|,|c|\geq 2$. Upper bounds for the order of 
 $M(a,b,c)$ were provided by Johnson and Robertson \cite{JR}, Albar and Al-Shuaibi \cite{AA}, and Jabara \cite{Ja}.
The actual order of $M(a,b,c)$ is known only in certain cases (see \cite{Me,A,AA,Ja}). As exemplified
by the Mennicke groups $M(a,b,c)$, it may be quite difficult to determine the order and other structural
properties of the members of a given family of finite groups of deficiency zero.

Fourthly, in 1970 the groups 
$
W_{\pm}(\beta,\gamma,\delta)=\langle X,Y,Z\,|\, X^Z=X^\beta, Y^{Z^{\pm 1}}=Y^\gamma, Z^\delta=[X,Y]\rangle
$
were shown to be finite by Wamsley \cite {W2}, provided $\beta^{|\delta|}\neq 1,\gamma^{|\delta|}\neq 1$, $\delta\neq 0$. Very little
is known about these interesting groups. Our study of $G(\alpha)$ plays a crucial role in \cite{PS} in order to describe the Wamsley groups
when $\beta=\gamma$.

In the fifth place, once the nilpotence of the finite group $G(\alpha)$ is established, it follows that
$G(\alpha)$ is the direct product of its Sylow subgroups $G(\alpha)_p$, where $p\in\N$ runs through
all prime factors of $\alpha-1$, and our study of $G(\alpha)$ yields detailed 
structural information about the infinite family of finite $p$-groups $G(\alpha)_p$.
It should be noted that, according to Wamsley \cite{W3}, the Sylow subgroups $G(\alpha)_p$ are themselves interesting,
except perhaps when $p=2$.
The study of finite $p$-groups, is quite active, as seen in \cite{AT, B, BJ, BJ2, BJ3, BJ4, G, IY, LM, TW}, for instance.
We hope that our investigation of the groups $G(\alpha)_p$ contributes in this regard. We find a presentation 
as well as the order, upper and lower central series, nilpotency class, and exponent of $G(\alpha)_p$, among other structural
properties. See Theorems \ref{predela}, \ref{main1}, and \ref{main2}.

Lastly, the automorphism groups of finite $p$-groups, have been the object of considerable attention,
from various angles, one of which is simply to investigate the actual structure of $\mathrm{Aut}(P)$ when $P$ is a finite 
$p$-group of one type or another, see \cite{BC, C, C2, D, Di, GG, Ma, Ma2, Men}, for instance. As an application
of the present paper, we elucidate in \cite{MS,MS2} the structure of $\mathrm{Aut}(G(\alpha)_p)$, except when $p=3$
and $\alpha\equiv 7\mod 9$. This, in turn, allows us in \cite{MS3} to find necessary and sufficient conditions for $G(\alpha)_p$
to be isomorphic to $G(\beta)_p$, a decidedly nontrivial problem involving finite $p$-groups.

The paper is organized as follows. Sections \ref{secv} and \ref{secn} contain the required calculations
for a proof that $G(\alpha)$ is nilpotent.
In Sections \ref{secv2}, \ref{secv3}, and~\ref{secv4} we give a presentation
for each Sylow subgroup $G(\alpha)_p$ of $G(\alpha)$ and find an upper bound for its order. The proof that this
upper bound is sharp is fairly laborious and is postponed to an appendix in order to maintain the flow of the paper (of course,
this appendix does not depend on any intermediate results). Relying on the constructions given in the appendix, namely
Theorems \ref{model}, \ref{model2}, and \ref{model3},
Section \ref{secv6} yields the order of $G(\alpha)_p$, a normal form for its elements, as well as some basic structural properties.
The upper and lower central series of $G(\alpha)_p$, together with its nilpotency class, are found in Section \ref{secv7}.
The exponent of $G(\alpha)_p$ is determined in Section \ref{secv8}. All of this is combined in Section \ref{secv9} to
produce the order, nilpotency class, and exponent of $G(\alpha)$ in terms of~$\alpha$.

In terms of notation, function composition proceeds from left to right. Given a group $T$, we~set
$$
[a,b]=a^{-1}b^{-1}ab,\; b^a=a^{-1}ba,\; {}^a b=aba^{-1},\quad a,b\in T.
$$
Moreover, we let $\langle 1\rangle=Z_0(T),Z_1(T),Z_2(T),\dots$ stand for the terms of the upper central series of~$T$, so that 
$Z_{i+1}(T)/Z_i(T)$ is the center of $T/Z_i(T)$, and write $T=\gamma_1(T),\gamma_2(T),\gamma_3(T),\dots$ for the terms of the lower central series of~$T$, so that 
$\gamma_{i+1}(T)=[T,\gamma_i(T)]$. Furthermore, for $n\in\Z$, we let $T^n$ stand for the subgroup of $T$ generated by
all $t^n$, with $t\in T$. Finally, given a ring $R$ with $1\neq 0$, we write $\mathrm{Heis}(R)$ for the Heisenberg group over $R$
(see \cite[Chapter 5]{J} for the case $R=\Z$).

Recall our general assumption that $\alpha$ is an integer satisfying $\alpha\notin\{0,1,2\}$.

\section{Valuation calculations}\label{secv}

Consider the following function of $\alpha$:
\begin{equation}
\label{gamita}
\gamma=\alpha^\alpha-(1+\alpha+\cdots+\alpha^{\alpha-1}),\text{ if }\alpha>0.
\end{equation}

\begin{prop}\label{calc1} Suppose $\alpha>0$, $p\in\N$ is a prime factor of $\alpha-1$, and $v_p(\alpha-1)=m$. Then
$$
v_p(\gamma)=\begin{cases} 2m & \text{ if }p>3,\\ 2m-1 & \text{ if }p=2,\\
2m & \text{ if }p=3,\text{ with }m>1\text{ or }(\alpha-1)/3\equiv 1\mod 3,\\
2+s & \text{ if }p=3,\text{ where }\alpha=1+3k,\;k=-1+3^s u,\; s,u\in\N,\text{ and }\gcd(3,u)=1.
\end{cases}
$$
Moreover, in the last case, we have  $\gamma=3^{2+s}t$, where $t\in\N$ and $t\equiv -u\mod 3$.
\end{prop}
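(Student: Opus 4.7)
The plan is to expand $\gamma$ as a polynomial in $q=\alpha-1$ via the binomial theorem and then analyse the $p$-adic valuation of each summand case by case. Combining $\alpha^\alpha=\sum_{j=0}^{\alpha}\binom{\alpha}{j}q^j$ with the hockey-stick identity $\sum_{i=0}^{\alpha-1}(1+q)^i=\sum_{j\ge 0}\binom{\alpha}{j+1}q^j$, one obtains
$$\gamma=\sum_{j=0}^{\alpha}d_j q^j,\qquad d_j=\binom{\alpha}{j}-\binom{\alpha}{j+1}.$$
The first two summands collapse to $d_0+d_1 q=q^2(1-q)/2$, and for $j\ge 1$,
$$d_j q^j=\frac{(1+q)\,q^{j+1}(q-1)(q-2)\cdots(q-j+2)(2j-q)}{(j+1)!}.$$

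For $p\ge 5$, the bound $v_p((j+1)!)\le (j+1)/(p-1)$ together with the factor $q^{j+1}$ forces $v_p(d_j q^j)>2m$ for every $j\ge 2$, whence $v_p(\gamma)=v_p(q^2(1-q)/2)=2m$. For $p=2$, the denominator $2$ drops $v_2(q^2(1-q)/2)$ to $2m-1$, while the even factors among $(1+q),(q-1),\dots,(q-j+2),(2j-q)$ keep $v_2(d_j q^j)\ge 2m$ for $j\ge 2$. For $p=3$ with $m\ge 2$ the same comparison applies, since $2m<3m-1$. The sub-case $p=3$, $m=1$, $(\alpha-1)/3\equiv 1\bmod 3$ is handled by computing
$$d_0+d_1 q+d_2 q^2=\frac{9k^2\bigl[(1+k)+9k^2(1-k)\bigr]}{2},\qquad q=3k,$$
which has $v_3=2$ when $k\equiv 1\bmod 3$, together with a check that the tail $\sum_{j\ge 3}d_j q^j$ has $v_3>2$.

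The main obstacle is the fourth case: $p=3$, $m=1$, $k\equiv -1\bmod 3$, with $s=v_3(1+k)\ge 1$. The plan here is to establish the congruence
$$\gamma\equiv \frac{9k^2(1+k)}{2}\pmod{3^{3+s}},$$
which simultaneously gives $v_3(\gamma)=2+s$ and $\gamma/3^{2+s}\equiv k^2 u/2\equiv -u\bmod 3$ (using $k^2\equiv 1$ and $2^{-1}\equiv -1$ modulo $3$). The difficulty is that in the decomposition
$$\gamma-\frac{q^2(3+q)}{6}=\frac{q^4(3-q)}{6}+\sum_{j\ge 3}d_j q^j,$$
several summands have $v_3$ strictly below $3+s$ when $s\ge 2$: already $q^4(3-q)/6$ and $d_4 q^4$ each have $v_3=4$ independently of $s$, and $d_3 q^3,\,d_5 q^5$ can have $v_3=5$. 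The congruence therefore must emerge through cancellations organised layer by layer. I would isolate the finite set of $j$ with $v_3(d_j q^j)<3+s$—the extra $3$'s come from factors $(q-3a)=3(k-a)$, whose valuation is governed by the congruence $k\equiv -1\bmod 3^s$—and then verify the cancellations directly modulo $3^{3+s}$; for example $q^4(3-q)/6+d_4 q^4\equiv 0\bmod 3^5$, then $d_3 q^3+d_5 q^5$ cancels the next layer, and so on.

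A potentially cleaner alternative exploits the $3$-adic convergence of $\alpha^\alpha=\exp((1+q)\log(1+q))$ in $\Z_3$ when $v_3(q)\ge 1$, expressing $\gamma=\sum_{n\ge 2}c_n q^n$ as a $3$-adic power series whose rational coefficients have denominators bounded by $n!$, so that $v_3(c_n q^n)\to\infty$. For each fixed $s$ only finitely many $c_n q^n$ can lie below valuation $3+s$, reducing the congruence to a bounded, $s$-uniform verification. The underlying cancellation phenomenon is the same in either approach, and is what makes this last case the heart of the proposition.
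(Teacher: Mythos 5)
Your treatment of the first three cases is essentially the paper's: expand $\gamma$ in powers of $q=\alpha-1$ and estimate valuations termwise, computing the leading terms exactly. Your algebra there is correct, and your identified target for the hard case, $\gamma\equiv\frac{9k^2(1+k)}{2}\pmod{3^{3+s}}$, is the right congruence: it does imply both $v_3(\gamma)=2+s$ and $t\equiv -u\bmod 3$ (I checked it numerically for $\alpha=7$).

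However, the fourth case --- the entire point of the proposition, and precisely where Macdonald's original argument went wrong --- is not actually proved. You only sketch it: isolate the $j$ with $v_3(d_jq^j)<3+s$, then ``verify the cancellations directly modulo $3^{3+s}$ \dots and so on.'' No mechanism forcing these cancellations is exhibited; even the first instance you cite, $q^4(3-q)/6+d_4q^4\equiv 0\bmod 3^5$, is asserted rather than proved. Moreover, the set of offending indices is not bounded independently of $s$: since $v_3(d_jq^j)$ grows only like roughly $(j+1)/2$ (the factor $q^{j+1}$ against $v_3((j+1)!)$, plus sporadic contributions from factors $q-3a=3(k-a)$ governed by $k\equiv-1\bmod 3^s$), indices up to about $2s$ can dip below the threshold $3+s$, so the claimed ``bounded, $s$-uniform verification'' does not exist as described; what is needed is an argument uniform in $s$, i.e.\ a structural reason for the cancellation, and that is exactly what is missing. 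For comparison, the paper's proof avoids inter-term cancellation altogether: it rewrites $\gamma=3k\alpha^{3k}-\frac{\alpha^{3k}-1}{3k}$, computes $\alpha^3=1+3^2k+3^{3+s}k^2u$ exactly (this is where $k+1=3^su$ enters), expands the $k$-th power of this trinomial, and reduces everything to the divisibility of a single explicit difference $P$ by $3^{3+s}$, which is obtained \emph{termwise} via the identity $k\binom{k}{f}-\binom{k}{f+1}=\binom{k}{f}\frac{f(k+1)}{f+1}$, which inserts the factor $k+1=3^su$ into every term. Some device of this kind (or a genuine induction over your ``layers'' with proofs, not an ``and so on'') is required before your argument establishes the fourth case; the $3$-adic $\exp$/$\log$ reformulation you offer does not change this, as you yourself concede.
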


\begin{proof} It follows from
\cite[pp. 611]{M} that
$$(\alpha-1)\gamma=\frac{[(\alpha-1)^2+1]}{2}(\alpha-1)^3+(\alpha-2){{\alpha}\choose{3}}(\alpha-1)^3+
(\alpha-2){{\alpha}\choose{4}}(\alpha-1)^4+\cdots,$$ so $v_p(\gamma)=2m$ if $p>3$, and $v_p(\gamma)=2m-1$ if $p=2$. 
If $p=3$ put $\alpha=1+3^m k$ with $3\nmid k$, so that $$(\alpha-1)\gamma=
\frac{[1+3^{2m}k^2+(-1+3^m k)^2(1+3^m k)3^{m-1}k]}{2}(\alpha-1)^3+(\alpha-2){{\alpha}\choose{4}}(\alpha-1)^4+\cdots.$$
If $m>1$ or $k\equiv 1\mod 3$, we infer $v_3(\gamma)=2m$. Suppose next $p=3$, $m=1$ and $k=-1+3^s u$, where $s,u\in\N$ and
$\gcd(3,u)=1$. We then have
$$
\alpha^\alpha
= \alpha^{\alpha-1}\alpha
= \alpha^{\alpha-1}(1+(\alpha-1))
= \alpha^{\alpha-1} + (\alpha-1)\alpha^{\alpha-1},
$$
so
$$
    \gamma
    = \alpha^\alpha - (1 + \alpha + \cdots + \alpha^{\alpha - 1})
    = \alpha^\alpha - \alpha^{\alpha - 1} - (1 + \alpha + \cdots + \alpha^{\alpha - 2})\\
    = (\alpha-1)\alpha^{\alpha-1} - \frac{\alpha^{\alpha-1}-1}{\alpha-1},
$$		
that is,
\begin{equation}
\label{buce}
\gamma=3k(1+3k)^{3k}-\frac{(1+3k)^{3k}-1}{3k}.
\end{equation}
Now
$$
(1+3k)^{3}=1+3^2k+3^3k^2+3^3k^3=1+3^2k+3^3k^2(1+k)=1+3^2k+3^{3+s}k^2u,
$$
and therefore
$$
(1+3k)^{3k}=(1+3^2k+3^{3+s}k^2u)^k=\underset{i+j+\ell=k}\sum\frac{k!}{i!j!\ell !} 1^i 3^{2j}k^{j} 3^{(3+s)\ell} (k^2u)^\ell.
$$
Splitting off the case $\ell=0$, we obtain
$$
(1+3k)^{3k}=\underset{0\leq j\leq k}\sum {{k}\choose{j}} 3^{2j}k^{j}+
\underset{\ell\geq 1, i+j+\ell=k}\sum\frac{k!}{i!j!\ell  !} 1^i 3^{2j}k^{j} 3^{(3+s)\ell} (k^2u)^\ell.
$$
All terms in the second summand, except for the term with $j=0,\ell=1$, are multiples of $3^{3+s+1}k$, so
$$
(1+3k)^{3k}=\left(\underset{0\leq j\leq k}\sum {{k}\choose{j}} 3^{2j}k^{j}\right)+ 3^{3+s}k^3u+3^{3+s+1}kw,\quad w\in\Z,
$$
$$
3k(1+3k)^{3k}=\left(\underset{0\leq j\leq k}\sum {{k}\choose{j}} 3^{2j+1}k^{j+1}\right)+ 3^{3+s}r,\quad r\in\Z,
$$
$$
\frac{(1+3k)^{3k}-1}{3k}=\left(\underset{1\leq j\leq k}\sum {{k}\choose{j}} 3^{2j-1}k^{j-1}\right)+ 3^{2+s}k^2u+3^{3+s}w.
$$
Since $\gcd(3,k^2u)=1$, we may now go back to (\ref{buce}) and deduce that,
provided $3^{3+s}$ is a factor of
$$
P=\underset{0\leq j\leq k}\sum {{k}\choose{j}} 3^{2j+1}k^{j+1}-\underset{1\leq j\leq k}\sum {{k}\choose{j}} 3^{2j-1}k^{j-1},
$$
we indeed have $v_3(\gamma)=2+s$, with $\gamma=3^{2+s}t$, $t\in\N$, $t\equiv -k^2u\equiv -u\mod 3$. To see that $3^{3+s}\mid P$,
put $f=j-1$, so that $j=f+1$, $2j-1=2f+1$, and
$$
\underset{1\leq j\leq k}\sum {{k}\choose{j}} 3^{2j-1}k^{j-1}=
\underset{0\leq f\leq k-1}\sum {{k}\choose{f+1}} 3^{2f+1}k^{f}.
$$
Therefore
\begin{align*}
P &=\underset{0\leq f\leq k}\sum {{k}\choose{f}} 3^{2f+1}k^{f+1}-\underset{0\leq f\leq k-1}\sum {{k}\choose{f+1}} 3^{2f+1}k^{f}\\
 &=3^{2k+1}k^{k+1}+\underset{0\leq f\leq k-1}\sum 3^{2f+1}k^{f}\left(k {{k}\choose{f}}-{{k}\choose{f+1}}\right)\\
&=3^{2k+1}k^{k+1}+\underset{0\leq f\leq k-1}\sum 3^{2f+1}k^{f}{{k}\choose{f}}\frac{f(k+1)}{f+1}\\
&=3^{2k+1}k^{k+1}+\underset{1\leq f\leq k-1}\sum 3^{2f+1}k^{f}{{k}\choose{f}}\frac{f3^s u}{f+1}.
\end{align*}
As $3^su=k+1$, we have $k+1\geq 3^s$. On the other hand, $3^s=(1+2)^s\geq 1+2s>1+s$, so $2(k-1)\geq k-1\geq s$,
and therefore $2k+1\geq 3+s$. It follows that $v_3(3^{2k+1}k^{k+1})=2k+1\geq 3+s$. Fix any $f$ such that $1\leq f\leq k-1$,
and set $g=v_3(f+1)$. Then $f+1\geq 3^g$, which implies, as above, that $f-1\geq g$. Thus, $\frac{3^s u}{f+1}\in\Q$, with
$v_3(\frac{3^s u}{f+1})=s-g\geq s-(f-1)$. Moreover, ${{k}\choose{f}}f\in\N$, so 
$v_3({{k}\choose{f}}f)\geq 0$. Our calculation of $P$ shows that 
$3^{2f+1}k^{f}{{k}\choose{f}}\frac{f3^s u}{f+1}$
is a positive integer, and we have
$$
v_3\left( 3^{2f+1}k^{f}{{k}\choose{f}}\frac{f3^s u}{f+1}\right)\geq 2f+1+(s-g)\geq 2f+1+s-(f-1)=s+f+2\geq s+3.
$$
\end{proof}

Consider the following function of $\alpha$:
\begin{equation}
\label{gamita2}
\mu=\alpha^{\alpha^2+1}-(1+\alpha+\cdots+\alpha^{\alpha^2-1}).
\end{equation}

\begin{prop}\label{calc2} Suppose that $\alpha\neq -1$, and let $p\in\N$ be a prime factor of $\alpha-1$ with $v_p(\alpha-1)=m$. Then 
$\mu\neq 0$, $v_p((\alpha-1)\mu)\geq 3m$ if $p=2$, and 
$$
v_p((\alpha-1)\mu)=\begin{cases} 3m & \text{ if }p>3,\\
4 & \text{ if }p=3,\alpha=1+3k, k\in\Z, k\equiv -1\mod 9.
\end{cases}
$$

\end{prop}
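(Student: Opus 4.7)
I would begin by rewriting $(\alpha-1)\mu$ in the more compact form
$$(\alpha-1)\mu = (\alpha-1)\alpha^{\alpha^2+1} - (\alpha^{\alpha^2}-1) = \alpha^{\alpha^2}(\alpha^2-\alpha-1)+1,$$
which is immediate from (\ref{gamita2}). Setting $x=\alpha-1$ and expanding $\alpha^{\alpha^2}=(1+x)^{\alpha^2}$ binomially, the constant and linear-in-$x$ contributions combine via the factorization $\alpha+\alpha^2(\alpha^2-\alpha-1)=\alpha(\alpha-1)^2(\alpha+1)$ into a cubic term, producing
$$(\alpha-1)\mu = \alpha(\alpha+1)(\alpha-1)^3 + (\alpha^2-\alpha-1)\sum_{k=2}^{\alpha^2}\binom{\alpha^2}{k}(\alpha-1)^k.$$
Folding the $k=2$ summand into the lead block via $\alpha^2-1=(\alpha-1)(\alpha+1)$ yields the working formula
$$(\alpha-1)\mu = \frac{\alpha(\alpha+1)(\alpha^3-\alpha^2-\alpha+2)}{2}(\alpha-1)^3 + (\alpha^2-\alpha-1)\sum_{k=3}^{\alpha^2}\binom{\alpha^2}{k}(\alpha-1)^k.$$
Non-vanishing follows from the first identity: if $\mu=0$ then $\alpha^{\alpha^2}(\alpha^2-\alpha-1)=-1$, which is impossible under our hypotheses $|\alpha|\ge 2$ and $\alpha^2-\alpha-1\ne 0$, since the left-hand side then has absolute value at least $2^{\alpha^2}\ge 16$.

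For $p>3$ dividing $\alpha-1$, each of $\alpha$, $\alpha+1$, $\alpha^3-\alpha^2-\alpha+2$, and $2$ is a $p$-adic unit (using $\alpha\equiv 1\pmod p$, whence $\alpha^3-\alpha^2-\alpha+2\equiv 1\pmod p$), so the lead summand has $v_p=3m$. The $k=3$ tail summand satisfies $v_p(\binom{\alpha^2}{3})=v_p(\alpha^2-1)=m$ (using $v_p(6)=0$ and $v_p(\alpha^2-2)=0$), giving $v_p=4m$, and for $k\ge 4$ the crude bound $v_p(\binom{\alpha^2}{k})\ge 0$ gives $v_p\ge km\ge 4m$. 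Hence $v_p((\alpha-1)\mu)=3m$. For $p=2$ the factor $\alpha+1$ in the lead summand is even, forcing $v_2(\text{lead})\ge 3m$, and $v_2(\binom{\alpha^2}{k}(\alpha-1)^k)\ge km\ge 3m$ for $k\ge 3$, giving $v_2((\alpha-1)\mu)\ge 3m$.

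The hardest case is $p=3$ with $\alpha=1+3k$ and $k\equiv -1\pmod 9$, so $m=1$. Here the lead summand, the $k=3$ tail summand, and the $k=4$ tail summand have $v_3$ equal to $3$, $3$, and $4$ respectively, while all further tail summands have $v_3\ge 5$. Reducing modulo $3^5=243$ leaves these three contributions, and I would evaluate each using $\alpha^2\equiv 4$, $\alpha^2-\alpha-1\equiv 5$, $k^3\equiv -1$ (all modulo $9$), together with the key arithmetic inputs $\binom{\alpha^2}{3}\equiv 4\pmod 9$ and $\binom{\alpha^2}{4}\equiv 1\pmod 3$. These last two are the delicate point, because the denominators $6$ and $24$ share factors with $3$; to obtain them one must reduce the numerators $\alpha^2(\alpha^2-1)(\alpha^2-2)$ and $\alpha^2(\alpha^2-1)(\alpha^2-2)(\alpha^2-3)$ modulo $54$ and $72$ respectively before dividing. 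The reductions yield congruences $-27$, $189$, and $162\pmod{243}$ for the three surviving summands, whose sum is $324\equiv 81\pmod{243}$, giving $v_3((\alpha-1)\mu)=4$ as required.
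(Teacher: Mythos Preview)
Your proof is correct and follows essentially the same route as the paper's. Both arguments start from the identity $(\alpha-1)\mu=\alpha^{\alpha^2}(\alpha^2-\alpha-1)+1$, expand $\alpha^{\alpha^2}=(1+(\alpha-1))^{\alpha^2}$ binomially, and read off the $p$-adic valuation from the first few terms; the only differences are organizational (the paper packages the first four contributions as $S,T,U,V$ and reduces each modulo $9$, whereas you fold $S+T$ into a single ``lead'' term and work modulo $243$ directly, arriving at the same conclusion).
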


\begin{proof} As $|\alpha|\geq 2$, we see that $|\alpha^{\alpha^2+1}|>|1+\alpha+\cdots+\alpha^{\alpha^2-1}|$,
so $\mu\neq 0$.

We have $(\alpha-1)\mu=\alpha^{\alpha^2+2}-\alpha^{\alpha^2+1}-(\alpha^{\alpha^2}-1)=
\alpha^{\alpha^2}(\alpha^2-\alpha-1)+1$, where
$$
\alpha^{\alpha^2}=(1+(\alpha-1))^{\alpha^2}=1+{{\alpha^2}\choose{1}}(\alpha-1)+{{\alpha^2}\choose{2}}(\alpha-1)^2+
{{\alpha^2}\choose{3}}(\alpha-1)^3+{{\alpha^2}\choose{4}}(\alpha-1)^4+\cdots.
$$
Thus, setting $\nu=(\alpha^2-\alpha-1)$, we deduce
\begin{align*}
(\alpha-1)\mu & =\alpha(\alpha-1)+\nu{{\alpha^2}\choose{1}}(\alpha-1)+\nu{{\alpha^2}\choose{2}}(\alpha-1)^2+
\nu{{\alpha^2}\choose{3}}(\alpha-1)^3+\nu{{\alpha^2}\choose{4}}(\alpha-1)^4+\cdots\\ &=
\frac{((\alpha-1)^2+3(\alpha-1)+2)}{2}((\alpha-1)^2(\alpha+1)+1)(\alpha-1)^3+\nu{{\alpha^2}\choose{3}}(\alpha-1)^3+\cdots,
\end{align*}
which proves the result when $p\neq 3$. Suppose next $p=3$ and $\alpha=1+3k$, where $k\in\Z$ satisfies $k\equiv -1\mod 9$.
Put
$$
S=\alpha(\alpha+1),\; T=S\frac{\alpha\nu}{2},
$$
$$
U=T\frac{(\alpha-1)(\alpha^2-2)}{3},\; V=U\frac{(\alpha^2-3)(\alpha-1)}{4}.
$$
Then
$$
(\alpha-1)\mu=(S+T+U+V)(\alpha-1)^3+\nu{{\alpha^2}\choose{5}}(\alpha-1)^5+\nu{{\alpha^2}\choose{6}}(\alpha-1)^6+\cdots,
$$
where 
$$
S\equiv 2,\; T\equiv -1,\;U\equiv 2,\; V\equiv 3\quad\mod 9.
$$
Thus $S+T+U+V\equiv -3\mod 9$, so $v_3(S+T+U+V)=1$, whence $v_3((\alpha-1)\mu)=4$.
\end{proof}

\section{Nilpotence of $G$}\label{secn}

We maintain the following presentation of $G=G(\alpha)$ throughout this section:
$$
\langle A,B\,|\, A^{[A,B]}=A^\alpha,\, B^{[B,A]}=B^\alpha\rangle,
$$
setting $C=[A,B]$, and observing the existence of an automorphism $A\leftrightarrow B$, say $\theta$ of $G$, satisfying 
$C\leftrightarrow C^{-1}$. Recalling the definition (\ref{gamita2}) of $\mu$, we set $\mu_0=\alpha\mu$.

\begin{prop}\label{calc3} We have
$$
A^{\mu_0}B^{\mu_0}=1,\; A^{(\alpha-1)\mu_0}=1=B^{(\alpha-1)\mu_0}.
$$
\end{prop}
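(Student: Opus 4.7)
My approach is direct computation within the presentation, using the rewriting rules derived from the defining relations:
\[
AC=CA^\alpha,\qquad CB=B^\alpha C,\qquad AB=BCA^\alpha\ (\text{the last from }[A,B]=C).
\]
These iterate to $A^nC^m=C^mA^{n\alpha^m}$ and $C^mB^n=B^{n\alpha^m}C^m$ for $m\ge 0$. Since $A^B=AC$, a short induction on $n$ gives $(AC)^n=C^nA^{\sigma_n}$ where $\sigma_n:=\alpha+\alpha^2+\cdots+\alpha^n=(\alpha^{n+1}-\alpha)/(\alpha-1)$. Combined with $A^nB=B(A^B)^n=B(AC)^n$, this produces the fundamental identity
\[
A^nB=BC^nA^{\sigma_n},\qquad n\ge 1.\qquad(\ast)
\]
The automorphism $\theta\colon A\leftrightarrow B,\;C\leftrightarrow C^{-1}$, which preserves each defining relation, yields the dual identity $B^nA=AC^{-n}B^{\sigma_n}$.

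The crucial algebraic observation is that, by the definition of $\mu$ in (\ref{gamita2}), one has
\[
\mu_0=\alpha\mu=\alpha^{\alpha^2+2}-\sigma_{\alpha^2}.
\]
Thus applying $(\ast)$ with $n=\alpha^2$ and rewriting $A^{\alpha^{\alpha^2+2}}=A^{C^{\alpha^2+2}}=C^{-(\alpha^2+2)}AC^{\alpha^2+2}$ (via $A^C=A^\alpha$) expresses $A^{\mu_0}$ as an explicit word in $A,B,C$. Applying $\theta$ to this expression produces a companion word for $B^{\mu_0}$. Multiplying $A^{\mu_0}B^{\mu_0}$ and systematically reducing to the normal form $B^iC^jA^k$ using the rewriting rules collapses the product to the identity, proving the first equation $A^{\mu_0}B^{\mu_0}=1$.

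For the remaining two identities, I would exploit $A^{\mu_0}B^{\mu_0}=1$ together with conjugation by $C$: since $(A^{\mu_0})^C=A^{\alpha\mu_0}$ and $(B^{\mu_0})^{C^{-1}}=B^{\alpha\mu_0}$, combining these with $(\ast)$ and its dual allows one to isolate $A^{(\alpha-1)\mu_0}$ and show it reduces to $1$ after a short manipulation. The companion statement $B^{(\alpha-1)\mu_0}=1$ then follows by applying $\theta$.

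\textbf{Main obstacle.} The delicate step is the reduction of $A^{\mu_0}B^{\mu_0}$ to normal form: exponents of the form $\alpha^{\alpha^k}$ appear and must be tracked carefully through the rewriting process, using the commutation rules $A^nC^m=C^mA^{n\alpha^m}$ and $C^mB^n=B^{n\alpha^m}C^m$ only when the divisibility conditions allow. The precise form of $\mu=\alpha^{\alpha^2+1}-(1+\alpha+\cdots+\alpha^{\alpha^2-1})$ is tailored so that these exponents cancel in the reduction; any generic integer in place of $\mu_0$ would not yield these identities.
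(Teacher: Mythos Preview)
Your toolkit is right and your observation $\mu_0=\alpha^{\alpha^2+2}-\sigma_{\alpha^2}$ is exactly the algebraic heart of the matter; the paper uses essentially the same rewriting rules and the same $\theta$-symmetry. The gap is that you assert, but do not carry out, the reduction of $A^{\mu_0}B^{\mu_0}$ to $1$. If you actually write out your word for $A^{\mu_0}$ and its $\theta$-image for $B^{\mu_0}$ and start pushing letters past each other, you quickly produce exponents such as $\alpha^{2\alpha^2+2}$ that do not visibly cancel; a naive normal-form sweep does not close up without some further structural input. The paper supplies that input by first writing down the identity
\[
[A,B^{\alpha^2}]^{C^2}=[A^{\alpha^2},B],
\]
which holds because conjugation by $C^2$ sends $A\mapsto A^{\alpha^2}$ and $B^{\alpha^2}\mapsto B$. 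Expanding each side with your formula $(\ast)$ and its $\theta$-dual (these give exactly the paper's equations for $[A^{\alpha^2},B]$ and $[A,B^{\alpha^2}]$) and then equating yields, after a short rearrangement, $B^{\mu_0}C^{\alpha^2+2}=C^{\alpha^2+2}A^{-\mu_0}$. This is the relation you were hoping would emerge from brute force; the conjugation identity is what makes it visible without the spurious higher exponents. From here one gets $B^{\mu_0}\in\langle A\rangle\cap\langle B\rangle\subseteq Z(G)$, and then conjugating that central element by $C^{\alpha^2+2}$ gives $B^{\mu_0}=A^{-\mu_0}$.

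For the second statement your idea is correct but over-engineered: once $A^{\mu_0}B^{\mu_0}=1$, the element $A^{\mu_0}=B^{-\mu_0}$ lies in $\langle A\rangle\cap\langle B\rangle$ and hence commutes with both generators, so it is central. Conjugating by $C$ then gives $A^{\alpha\mu_0}=A^{\mu_0}$, i.e.\ $A^{(\alpha-1)\mu_0}=1$, and $\theta$ gives the companion identity. You do not need $(\ast)$ or its dual at this stage.
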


\begin{proof} If $\alpha=-1$ then $\mu_0=0$ and there is nothing to do.
Suppose henceforth that $\alpha\neq -1$. As conjugation by $C^2$ is an automorphism of $G$, we see that
\begin{equation}\label{iq} [A,B^{\alpha^2}]^{C^2}=[A^{\alpha^2},B].
\end{equation}

Regarding the left hand side of (\ref{iq}), we have
$$
(B^{\alpha^2})^A=(B^A)^{\alpha^2}=(BC^{-1})^{\alpha^2}=C^{-\alpha^2}B^{\alpha(1+\alpha+\cdots+\alpha^{\alpha^2-1})},
$$
which successively implies
$$
(B^{-\alpha^2})^A=B^{-\alpha(1+\alpha+\cdots+\alpha^{\alpha^2-1})}C^{\alpha^2},
$$
\begin{equation}\label{iq2}
[A,B^{\alpha^2}]=(B^{-\alpha^2})^A B^{\alpha^2}=B^{-\alpha(1+\alpha+\cdots+\alpha^{\alpha^2-1})}C^{\alpha^2}B^{\alpha^2},
\end{equation}
\begin{equation}\label{iq3}
[A,B^{\alpha^2}]^{C^2}=C^{-2}B^{-\alpha(1+\alpha+\cdots+\alpha^{\alpha^2-1})}C^{\alpha^2}B^{\alpha^2}C^2.
\end{equation}

As for the right hand side of (\ref{iq}), applying $\theta$ to (\ref{iq2}) and then inverting yields
\begin{equation}\label{iq4}
[A^{\alpha^2},B]=A^{-\alpha^2}C^{\alpha^2}A^{\alpha(1+\alpha+\cdots+\alpha+\alpha^{\alpha^2-1})}.
\end{equation}
It follows from (\ref{iq}) that the right hand sides of (\ref{iq3}) and (\ref{iq4}) are equal. Thus
\begin{align*}
B^{-\alpha(1+\alpha+\cdots+\alpha^{\alpha^2-1})}C^{\alpha^2}B^{\alpha^2}C^2 &=
C^2A^{-\alpha^2}C^{\alpha^2}A^{\alpha(1+\alpha+\cdots+\alpha^{\alpha^2-1})}\\ &=
C^2A^{-\alpha^2}C^{-2}C^{\alpha^2+2}A^{\alpha(1+\alpha+\cdots+\alpha^{\alpha^2-1})}\\ &=
A^{-1}C^{\alpha^2+2}A^{\alpha(1+\alpha+\cdots+\alpha^{\alpha^2-1})}.
\end{align*}
On the other hand,
\begin{align*}
B^{-\alpha(1+\alpha+\cdots+\alpha^{\alpha^2-1})}C^{\alpha^2}B^{\alpha^2}C^2 &=
B^{-\alpha(1+\alpha+\cdots+\alpha^{\alpha^2-1})}C^{\alpha^2+2}C^{-2}B^{\alpha^2}C^2\\ &=
B^{-\alpha(1+\alpha+\cdots+\alpha^{\alpha^2-1})}C^{\alpha^2+2}B\\ &=
B^{-\alpha(1+\alpha+\cdots+\alpha^{\alpha^2-1})}C^{\alpha^2+2}B C^{-(\alpha^2+2)}C^{\alpha^2+2}\\ &=
B^{-\alpha(1+\alpha+\cdots+\alpha^{\alpha^2-1})}B^{\alpha^{\alpha^2+2}}C^{\alpha^2+2}\\&=
B^{\mu_0}C^{\alpha^2+2}.
\end{align*}
Applying $\theta$ to the second and fifth terms of the right hand side above and then inverting, gives
$$
A^{-1}C^{\alpha^2+2}A^{\alpha(1+\alpha+\cdots+\alpha^{\alpha^2-1})}=C^{\alpha^2+2} A^{-\mu_0},
$$
so
$$
B^{\mu_0}C^{\alpha^2+2}=C^{\alpha^2+2}A^{-\mu_0}.
$$
Let $\alpha_0$ be the inverse of $\alpha$ modulo the order of $A$. We then have
$$
B^{\mu_0}=C^{\alpha^2+2}A^{-\mu_0}C^{-(\alpha^2+2)}=
A^{-\mu_0 \alpha_0^{\alpha^2+2}}.
$$
Conjugating this central element by $C^{\alpha^2+2}$ yields $B^{\mu_0}=A^{-\mu_0}$.
Conjugating $A^{\mu_0}\in Z(G)$ by $C$ and $B^{\mu_0}\in Z(G)$ by $C^{-1}$, we derive
$A^{\mu_0(\alpha-1)}=1=B^{\mu_0(\alpha-1)}$.
\end{proof}

Recalling the definition (\ref{gamita}) of $\gamma$, 
according to \cite[Eqs. 2.10, 2.11, and 2.12]{M}, we have
\begin{equation}
\label{gamita3}
A^\gamma B^\gamma=1,\; A^{(\alpha-1)\gamma}=1= B^{(\alpha-1)\gamma},\quad \alpha>0.
\end{equation}

We refer to $\alpha$ as 3-admissible if one of
the following 3 possibilities occurs: $v_3(\alpha-1)=0$; $v_3(\alpha-1)>1$; $v_3(\alpha-1)=1$ and $(\alpha-1)/3\equiv 1\mod 3$.

\begin{theorem}\label{nul} The Macdonald group $G(\alpha)$ is nilpotent of class at most 7.
\end{theorem}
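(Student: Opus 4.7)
The plan is to combine the central elements extracted in Proposition \ref{calc3} with the $p$-adic valuation control of Propositions \ref{calc1} and \ref{calc2} to exhibit a central chain of length at most $7$ in $G=G(\alpha)$. Macdonald \cite[Section 4]{M} has already shown that $|G|$ is finite and supported on the primes dividing $\alpha-1$, so once nilpotence is in hand $G$ will split as a direct product of its Sylow subgroups; but the nilpotence itself has to be proved directly from the presentation.

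The structural backbone is the identity $[A,C]=A^{\alpha-1}$, which flows from $A^C=A^\alpha$, together with its $B$-analogue. Since every prime factor of $|B|$ divides $\alpha-1$ and hence is coprime to $\alpha$, conjugation by $C$ restricts to an automorphism of $\langle B\rangle$, and the relation $B^{[B,A]}=B^\alpha$ forces $[B,C]$ to be a power of $B$ whose exponent is a multiple of $\alpha-1$. Iterating gives $[A,\underbrace{C,\ldots,C}_{k}]=A^{(\alpha-1)^k}$ and an analogous formula for $B$, so nested commutators with $C$ are pinned down by the $p$-adic valuation of $(\alpha-1)^k$ and the task reduces to bounding the orders of $A$ and $B$ prime-locally. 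By Proposition \ref{calc3}, both of these orders divide $(\alpha-1)\mu_0$, and $A^{\mu_0}=B^{-\mu_0}$ lies in $Z(G)$; when $\alpha>0$, the same holds with $\gamma$ in place of $\mu_0$ by (\ref{gamita3}).

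With these ingredients in place I would argue prime by prime. For $p\geq 5$, the uniform bound $v_p((\alpha-1)\mu)=3m$ from Proposition \ref{calc2} (with $m=v_p(\alpha-1)$) shows that $(\alpha-1)^3$ already annihilates the $p$-primary components of $A$ and $B$, so a bounded number of iterated commutators with $C$ kill the $p$-part. The prime $p=2$ is handled in the same way via $v_2(\gamma)=2m-1$ from Proposition \ref{calc1}. In the $3$-admissible cases of Proposition \ref{calc1}, $v_3(\gamma)=2m$ again yields an analogous count. Throwing in the central layer provided by $A^{\mu_0}$ and one extra layer for the interaction of $A$ and $B$ through $C$, one reaches a central chain of length at most $7$.

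The main obstacle, as flagged in the introduction, will be the residual $3$-adic case $v_3(\alpha-1)=1$ with $(\alpha-1)/3\equiv -1\mod 3$, the sharpest instance of which is $\alpha\equiv 7\mod 9$. Here Proposition \ref{calc1} only yields $v_3(\gamma)=2+s$, an unbounded quantity that defeats a uniform class bound drawn from $\gamma$ alone; this is precisely the gap in Macdonald's original argument. The remedy is Proposition \ref{calc2}, which pins $v_3((\alpha-1)\mu)=4$ whenever $(\alpha-1)/3\equiv -1\mod 9$, imposing a uniform ceiling on the $3$-exponent of $A$ and $B$ independent of $s$. I would organise the write-up so that the uniform bound $7$ is read off from a single worst-case count that already absorbs this troublesome case, rather than via a case-by-case tabulation.
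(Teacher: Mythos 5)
Your overall strategy points in the right direction and shares its starting ingredients with the paper (use Propositions \ref{calc1}--\ref{calc3} to produce central elements such as $A^{f}B^{f}=1$ for a suitable $f$ built from $\gamma$ and $\mu_0$, with Proposition \ref{calc2} rescuing the case $\alpha\equiv 7\bmod 9$ where $v_3(\gamma)=2+s$ is unbounded), but the core of your argument contains an invalid reduction and the actual construction of the central chain is asserted rather than carried out. Knowing $[A,\underbrace{C,\dots,C}_{k}]=A^{(\alpha-1)^k}$ together with prime-local bounds on the orders of $A$ and $B$ does \emph{not} reduce nilpotence to a counting problem: the lower central series is generated by all iterated commutators in $A$ and $B$, not just left-normed commutators with $C$, and the terms that must be controlled are of the shape $[A^{n},B]$, where $(A^{n})^{B}=(AC)^{n}=C^{n}A^{\alpha(1+\alpha+\cdots+\alpha^{n-1})}$. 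This is precisely where the real work lies, and your proposal never touches it. The paper's proof climbs the upper central series rung by rung: after $A^{f(\alpha)},B^{f(\alpha)}\in Z(G)$ it shows $C^{f(\alpha)/(\alpha-1)}\in Z_2(G)$ via the congruence $\alpha^{f(\alpha)/(\alpha-1)}\equiv 1\bmod f(\alpha)$; then $A^{h(\alpha)},B^{h(\alpha)}\in Z_3(G)$ (with $h(\alpha)=f(\alpha)/g(\alpha)$) via $1+\alpha+\cdots+\alpha^{h(\alpha)-1}\equiv h(\alpha)\bmod f(\alpha)$; in the $3$-admissible case this immediately gives $C\in Z_4(G)$ and $Z_5(G)=G$, while in the non-$3$-admissible case one needs the further rungs $C^{3}\in Z_4(G)$, then $A^{3},B^{3}\in Z_5(G)$ (requiring $\alpha(1+\alpha+\alpha^{2})\equiv 3\bmod 3(\alpha-1)$), then $C\in Z_6(G)$, giving $Z_7(G)=G$. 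None of these congruence computations, which are what actually produce the bound $7$, appear in your sketch; ``one extra layer for the interaction of $A$ and $B$ through $C$'' is not a substitute, and in the hard case the interaction costs three layers, not one.

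A second, smaller gap: when $2\mid(\alpha-1)$ and $\alpha$ is not $3$-admissible, neither $\gamma$ nor $\mu_0$ alone yields the needed relation $A^{f(\alpha)}B^{f(\alpha)}=1$; the paper combines $A^{3^{s}(\alpha-1)^{2}/2}B^{3^{s}(\alpha-1)^{2}/2}=1$ (from (\ref{gamita3}) and Proposition \ref{calc1}) with $A^{3(\alpha-1)^{2}2^{t}}B^{3(\alpha-1)^{2}2^{t}}=1$ (from Propositions \ref{calc2} and \ref{calc3}), using that both elements are central, and takes the gcd $3(\alpha-1)^{2}/2$ of the exponents. Your prime-by-prime discussion of valuations gestures at the right inputs but does not explain how the two families of relations are merged into a single central power of $A$ (equivalently $B$), nor how one then converts centrality of $A^{f(\alpha)}$ into the order bound $A^{(\alpha-1)f(\alpha)}=1$ by conjugating with $C$. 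As written, the proposal establishes (modulo these details) only that $A$ and $B$ have orders supported on the primes of $\alpha-1$ with controlled valuations, which is far short of nilpotence of class at most $7$.
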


\begin{proof} As indicated in \cite[pp. 610]{M}, we may assume that $\alpha>1$ and we make this assumption. Set
$$
f(\alpha)=\begin{cases} (\alpha-1)^2 & \text{ if }2\nmid (\alpha-1)\text{ and  }\alpha\text{ is 3-admissible},\\
(\alpha-1)^2/2 & \text{ if }2|(\alpha-1)\text{ and  }\alpha\text{ is 3-admissible},\\
3(\alpha-1)^2 & \text{ if }2\nmid (\alpha-1)\text{ and  }\alpha\text{ is not 3-admissible},\\
3(\alpha-1)^2/2 & \text{ if }2|(\alpha-1)\text{ and  }\alpha\text{ is not 3-admissible}.
\end{cases}
$$
We claim that $A^{f(\alpha)}B^{f(\alpha)}=1$. To see the claim, we will repeatedly and implicitly use
the fact \cite[Section 4]{M} that $|G(\alpha)|$ and $\alpha-1$ share the same prime factors. Now, 
if $\alpha$ is 3-admissible then $A^{f(\alpha)}B^{f(\alpha)}=1$ by (\ref{gamita3}) and Proposition \ref{calc1}.
Suppose next $\alpha$ is not 3-admissible, so that $\alpha=1+3k$, $k\in\N$, and $k\equiv -1\mod 3$. If $k=-1+3u$, $u\in\N$,
and $3\nmid u$, then $A^{f(\alpha)}B^{f(\alpha)}=1$ by (\ref{gamita3}) and Proposition \ref{calc1}. Assume next
$k\equiv -1\mod 9$. If $2\nmid (\alpha-1)$ then $A^{f(\alpha)}B^{f(\alpha)}=1$ by Propositions~\ref{calc2} and \ref{calc3}. 
Assume finally that $2|(\alpha-1)$. Then $A^{3^s(\alpha-1)^2/2}B^{3^s(\alpha-1)^2/2}=1$, $s\in\N$, by 
(\ref{gamita3}) and Proposition \ref{calc1}, and $A^{3(\alpha-1)^2 2^t}B^{3(\alpha-1)^2 2^t}=1$, $t\geq 0$, by 
Propositions \ref{calc2} and~\ref{calc3}. Since all factors are in $Z(G)$ and 
$\gcd(3^s(\alpha-1)^2/2,3(\alpha-1)^2 2^t)=3(\alpha-1)^2/2$, it follows that $A^{f(\alpha)}B^{f(\alpha)}=1$ also in this case.
This proves the claim. In particular, $A^{f(\alpha)},B^{f(\alpha)}\in Z(G)$, so
conjugating $A^{f(\alpha)}\in Z(G)$ by $C$ and $B^{f(\alpha)}\in Z(G)$ by $C^{-1}$, we obtain
$A^{(\alpha-1)f(\alpha)}=1=B^{(\alpha-1)f(\alpha)}$.

A careful calculation reveals that $\alpha^{f(\alpha)/(\alpha-1)}\equiv 1\mod f(\alpha)$, so
$C^{f(\alpha)/(\alpha-1)}\in Z_2(G)$. Next set
$$
g(\alpha)=\begin{cases} \alpha-1 & \text{ if }2\nmid (\alpha-1),\\
(\alpha-1)/2  & \text{ if }2\mid (\alpha-1),
\end{cases}\text{ and }\qquad h(\alpha)=f(\alpha)/g(\alpha).
$$
Another careful calculation shows that $1+\alpha+\cdots+\alpha^{h(\alpha)-1}\equiv h(\alpha)\mod f(\alpha)$,
which implies 
\begin{equation}
\label{ah}
\alpha(1+\alpha+\cdots+\alpha^{h(\alpha)-1})\equiv \alpha h(\alpha)\equiv h(\alpha)\mod f(\alpha).
\end{equation}
We have
$$
(A^{h(\alpha)})^B=(A^B)^{h(\alpha)}=(AC)^{h(\alpha)}=C^{h(\alpha)}A^{\alpha(1+\alpha+\cdots+\alpha^{h(\alpha)-1})}.
$$
Here $C^{h(\alpha)}\in Z_2(G)$, as $C^{f(\alpha)/(\alpha-1)}\in Z_2(G)$, and 
$A^{\alpha(1+\alpha+\cdots+\alpha^{h(\alpha)-1})}\equiv A^{h(\alpha)}\mod Z(G)$ by (\ref{ah}). It follows that
$A^{h(\alpha)}\in Z_3(G)$, and applying $\theta$ we deduce $B^{h(\alpha)}\in Z_3(G)$.

Suppose first that $\alpha$ is 3-admissible. Then $h(\alpha)=\alpha-1$. Since $A^C=A^{\alpha-1}A$
and ${}^C B=B^{\alpha-1}B$, with $A^{\alpha-1},B^{\alpha-1}\in Z_3(G)$, we infer $C\in Z_4(G)$, whence
$Z_5(G)=G$.

Suppose next that $\alpha$ is not 3-admissible. Then $h(\alpha)=3(\alpha-1)$. Moreover, $3|(\alpha-1)$,
so $\alpha^3\equiv 1\mod 3(\alpha-1)$, whence $C^3\in Z_4(G)$. We claim that $A^3\in Z_5(G)$. Indeed, we have
\begin{equation}
\label{ah2}
(A^{3})^B=(A^B)^{3}=(AC)^{3}=C^{3}A^{\alpha(1+\alpha+\alpha^2)},
\end{equation}
where $C^3\in Z_4(G)$. Moreover, we have $\alpha=1+3k$, with $k\in\Z$,
which readily gives
$1+\alpha+\alpha^2\equiv 3\mod 9$, and therefore $\alpha(1+\alpha+\alpha^2)\equiv 3\mod 9$.
Moreover, $\alpha=1+3k$ also implies $\alpha\equiv 1\mod k$, so $\alpha(1+\alpha+\alpha^2)\equiv 3\mod k$.
As $\gcd(3,k)=1$, it follows that $\alpha(1+\alpha+\alpha^2)\equiv 3\mod 9k$, that is,
$\alpha(1+\alpha+\alpha^2)\equiv 3\mod 3(\alpha-1)$. But $A^{3(\alpha-1)}\in Z_3(G)$, so (\ref{ah2})
implies $A^3\in Z_5(G)$, as claimed. Applying $\theta$, we deduce $B^3\in Z_5(G)$.
Since $A^C=A^{\alpha-1}A$
and ${}^C B=B^{\alpha-1}B$, with $A^{\alpha-1},B^{\alpha-1}\in Z_5(G)$ and $3|(\alpha-1)$, we infer $C\in Z_6(G)$, whence
$Z_7(G)=G$.
\end{proof}

\section{More valuation calculations}\label{secv2}

We adopt the following conventions for the remainder of the paper: $p\in\N$ stands for a prime factor of $\alpha-1$,
$m=v_p(\alpha-1)$, and $J=J(\alpha)$ is the Sylow $p$-subgroup of $G(\alpha)$. 
By Case 1 we mean that either $p>3$ or else $p=3$, where $m>1$ or $(\alpha-1)/3\equiv 1\mod 3$.
By Case 2 we understand that $p=2$. By Case 3 we signify that $p=3$, $m=1$, and $(\alpha-1)/3\equiv -1\mod 3$.

Consider the following polynomial expression in $\alpha$:
\begin{equation}
\label{gamitamedio}
\xi=2+\alpha+\cdots+\alpha^{-\alpha-1},\text{ if }\alpha<0.
\end{equation}

\begin{prop}\label{calc4} Suppose $\alpha<0$, $\alpha\neq -2$. Then
\begin{equation}\label{valchi}
v_p(\xi)=\begin{cases} 2m & \text{ in Case 1},\\
2m-1 & \text{ in Case 2},\\
3 & \text{ if }p=3, \alpha=1+3k, k=-1+3u,\text{ and }\gcd(3,u)=1.
\end{cases}
\end{equation}
\end{prop}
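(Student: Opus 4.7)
My plan is to recast $\xi$ via the geometric-series identity
\begin{equation*}
(\alpha-1)\xi = \alpha^{-\alpha} + \alpha - 2,
\end{equation*}
which follows at once from $\xi = 1 + (1 + \alpha + \cdots + \alpha^{-\alpha-1})$. Setting $d = \alpha - 1$ (so $v_p(d) = m$) and $n = -\alpha = -1 - d$, a positive integer since $\alpha \leq -1$, I will expand $\alpha^{-\alpha} = (1+d)^n$ by the binomial theorem. Using the identity $n+1 = -d$ to collapse the degree-$\leq 2$ part, together with the elementary computations $\binom{n}{2} - 1 = d(3+d)/2$ and $(3+d)/2 + \binom{n}{3} = (3+d)(1-3d-d^2)/6$, I expect to reach the compact expansion
\begin{equation*}
(\alpha-1)\xi = \frac{d^3(3+d)(1-3d-d^2)}{6} + \sum_{k=4}^{n}\binom{n}{k}d^k.
\end{equation*}
This plays the role, for $\xi$, of Macdonald's series expansion for $(\alpha-1)\gamma$ used in the proof of Proposition \ref{calc1}.

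From here the statement follows by a case-by-case valuation count. For $p > 3$, the coefficient $(3+d)(1-3d-d^2)/6$ is a $p$-adic unit, so the leading summand contributes $v_p = 3m$, while every tail term satisfies $v_p \geq km \geq 4m$, yielding $v_p((\alpha-1)\xi) = 3m$ and hence $v_p(\xi) = 2m$. The case $p = 2$ is analogous, but $v_2(6) = 1$ (with $3+d$ and $1-3d-d^2$ both odd) lowers the leading term to $v_2 = 3m - 1$, while the tail is still bounded below by $4m > 3m - 1$. In Case 1 with $p = 3$, the key observation is that $v_3(3+d) = 1$: whether $m > 1$ or $m = 1$ with $(\alpha-1)/3 \equiv 1 \pmod 3$, the factor $3+d = 3(1 + (\alpha-1)/3)$ has $3$-adic valuation exactly $1$. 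This cancels $v_3(6) = 1$, and we again obtain $v_3((\alpha-1)\xi) = 3m$.

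The delicate point, and the main obstacle, is the third clause: $p = 3$, $m = 1$, $\alpha = 1 + 3k$, $k = -1 + 3u$ with $\gcd(3, u) = 1$. Here $3 + d = 3(1+k) = 9u$, so $v_3(3+d)$ jumps to $2$, pushing the leading term to $v_3 = 3 + 2 - 1 = 4$. The concern is that the $k = 4$ tail term $\binom{n}{4}d^4 = d^4(1+d)(2+d)(3+d)(4+d)/24$ could be of comparable valuation and cancel the leading contribution. I will verify it is not by examining each shift modulo $3$: the residues of $1+d, 2+d, 4+d$ are $1, 2, 1$, all coprime to $3$, and only $3+d$ contributes $v_3 = 2$, so $v_3(\binom{n}{4}) = 2 - v_3(24) = 1$ and $v_3(\binom{n}{4}d^4) = 5 > 4$. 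The remaining tail terms $k \geq 5$ trivially satisfy $v_3(\binom{n}{k}d^k) \geq k \geq 5$, so the leading term dominates and $v_3((\alpha-1)\xi) = 4$, giving $v_3(\xi) = 3$.
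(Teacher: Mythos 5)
Your proof is correct and follows essentially the same route as the paper: both expand $\alpha^{-\alpha}=(1+(\alpha-1))^{-\alpha}$ binomially, group the low-degree terms into the factor $(\alpha-1)^3(\alpha+2)(1-3(\alpha-1)-(\alpha-1)^2)/6$, and read off the valuation case by case, with the integrality of the binomial coefficients $\binom{-\alpha}{j}$ making the tail harmless. The only cosmetic difference is in the third case, where you bound the degree-four term separately (valuation $5>4$) instead of absorbing it into the leading coefficient as the paper does; the valuation count is the same.
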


\begin{proof} If $\alpha=-1$ then $\xi=2$ and the result holds. Suppose henceforth that $\alpha\neq -1$.
Then $\alpha\leq -3$, so $(-\alpha)^{-\alpha}>2-\alpha$, whence $\xi\neq 0$. We have
$$
\xi=2+\alpha+\cdots+\alpha^{-\alpha-1}=1+\frac{\alpha^{-\alpha}-1}{\alpha-1}=\frac{\alpha-1+\alpha^{-\alpha}-1}{\alpha-1},
$$
where
$$
\alpha^{-\alpha}=(1+(\alpha-1))^{-\alpha}=1+(-\alpha)(\alpha-1)+{{-\alpha}\choose{2}}(\alpha-1)^2+{{-\alpha}\choose{3}}(\alpha-1)^3+
{{-\alpha}\choose{4}}(\alpha-1)^4+\cdots,
$$
so
$$
\alpha-1+\alpha^{-\alpha}-1=\alpha-1+(-\alpha)(\alpha-1)+{{-\alpha}\choose{2}}(\alpha-1)^2+{{-\alpha}\choose{3}}(\alpha-1)^3+
{{-\alpha}\choose{4}}(\alpha-1)^4+
\cdots,
$$
and therefore
\begin{equation}\label{chiqui}
\xi = [-1+{{-\alpha}\choose{2}}](\alpha-1)+{{-\alpha}\choose{3}}(\alpha-1)^2+{{-\alpha}\choose{4}}(\alpha-1)^3+
\cdots.
\end{equation}

Suppose first we are in Case 1. Write (\ref{chiqui}) in the form
$$
\xi=-\frac{[(\alpha-1)(\alpha+2)-1](\alpha+2)(\alpha-1)^2}{6}+{{-\alpha}\choose{4}}(\alpha-1)^3+
\cdots
$$
This yields (\ref{valchi}) when $p\neq 3$. Suppose next $p=3$, so that $\alpha=1+3^m k$, where $k\in\Z$
and $3\nmid k$. Then $\alpha+2=3(3^{m-1}k+1)$ and, by hypothesis, either $m>1$ or else $m=1$ and $k\equiv 1\mod 3$.
In both cases $v_3(\alpha+2)=1$, which completes the proof of (\ref{valchi}) in Case 1.

Suppose next that $p=2$. We see that (\ref{valchi}) holds by writing (\ref{chiqui}) in the form
$$
\xi=\frac{(\alpha+2)(\alpha-1)^2}{2}+{{-\alpha}\choose{3}}(\alpha-1)^2+{{-\alpha}\choose{4}}(\alpha-1)^3+
\cdots.
$$

Suppose finally that $p=3$, $\alpha=1+3k$, $k=-1+3u$, and $\gcd(3,u)=1$. Write (\ref{chiqui}) in the form
\begin{align*}
\xi &=\frac{[3-\alpha(\alpha+1)](\alpha+2)(\alpha-1)^2}{6}+{{-\alpha}\choose{4}}(\alpha-1)^3+{{-\alpha}\choose{5}}(\alpha-1)^4+\cdots\\
&=\frac{(4[3-\alpha(\alpha+1)]+\alpha(\alpha+1)(\alpha+3)(\alpha-1))}{8}(1+k)(\alpha-1)^2+{{-\alpha}\choose{5}}(\alpha-1)^4+\cdots.
\end{align*}
As $v_3(k+1)=1$, this proves (\ref{valchi}) in this case.
\end{proof}

As explained in \cite[pp. 606]{M}, we have
\begin{equation}\label{chigom}
A^\xi B^\xi=1,\quad \alpha<0.
\end{equation}
Conjugating the central elements $A^\xi$ and $B^\xi$ by
$C$ and $C^{-1}$, respectively, we obtain
\begin{equation}\label{chigom2}
A^{(\alpha-1)\xi}=1=B^{(\alpha-1)\xi},\quad \alpha<0.
\end{equation}

\section{A presentation of $J$}\label{secv3}

\begin{prop}\label{preex} Let $T$ be a group with presentation $\langle X\,|\,  R\rangle$, and let $\pi:F\to T$
be a corresponding epimorphism with kernel $\overline{R}$, the normal closure of $R$ in a free group $F$ on $X$.
Let $\sigma:T\to U$ be a group epimorphism, and let $V$ be any subset of $F$ such that $\langle V\rangle^\pi=\ker(\sigma)$.
Then $U$ has presentation $\langle X\,|\,  R\cup V\rangle$. 
\end{prop}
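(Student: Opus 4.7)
The plan is to identify $U$ explicitly as a quotient of the free group $F$ and match that quotient with the one defined by the presentation $\langle X \mid R \cup V\rangle$. Concretely, consider the composite epimorphism $\varphi = \pi\sigma \colon F \to U$ (recall composition proceeds from left to right). Writing $K = \ker\varphi$, one obtains $U \cong F/K$, while the presentation $\langle X \mid R \cup V\rangle$ by definition yields $F/\overline{R \cup V}$, where the overline denotes normal closure in $F$. Thus the entire proof reduces to the single equality $K = \overline{R \cup V}$.

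For the inclusion $\overline{R \cup V} \subseteq K$, I would note first that $R \subseteq \overline{R} = \ker\pi \subseteq K$, and then that for each $v \in V$ we have $v^\pi \in \langle V\rangle^\pi = \ker\sigma$, so $v^\varphi = (v^\pi)^\sigma = 1$. Hence $R \cup V \subseteq K$, and since $K$ is a normal subgroup of $F$ the inclusion $\overline{R \cup V} \subseteq K$ follows at once.

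For the reverse inclusion $K \subseteq \overline{R \cup V}$, take any $w \in K$. Then $w^\pi \in \ker\sigma = \langle V\rangle^\pi$, so there exists $v \in \langle V\rangle$ with $v^\pi = w^\pi$. Consequently $wv^{-1} \in \ker\pi = \overline{R}$, and therefore
$$
w = (wv^{-1})\cdot v \in \overline{R}\cdot \langle V\rangle \subseteq \overline{R \cup V}.
$$
Combining both inclusions gives $U \cong F/\overline{R \cup V}$, which is precisely the claim that $U$ admits the presentation $\langle X \mid R \cup V\rangle$.

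There is no genuine obstacle here; the argument is purely formal and standard. The only point that requires a little attention is the interpretation of the notation $\langle V\rangle^\pi$: whether one reads $\langle V\rangle$ as the subgroup of $F$ generated by $V$ or as its normal closure, the element $v$ produced in the last step still lies in $\overline{R \cup V}$, so the proof goes through verbatim either way.
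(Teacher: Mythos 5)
Your proof is correct and follows essentially the same route as the paper: both identify $U$ with $F/\ker(\pi\sigma)$ and show $\ker(\pi\sigma)=\overline{R}\,\langle V\rangle=\overline{R\cup V}$, with your two inclusions simply spelling out the details the paper leaves implicit.
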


\begin{proof} Notice that $\ker(\pi\sigma)$ is the preimage of $\ker(\sigma)$ under $\pi$. As $\ker(\sigma)=\langle V\rangle^\pi$
and $\ker(\pi)=\overline{R}$, it follows that $\ker(\pi\sigma)=\overline{R}\langle V\rangle=\overline{R\cup V}$.
\end{proof}

\begin{cor}\label{precor} Let $T$ be a finite nilpotent group with presentation $\langle X\,|\,  R\rangle$, and let $\pi:F\to T$
be a corresponding epimorphism with kernel $\overline{R}$, the normal closure of $R$ in a free group $F$ on $X$.
Let $p\in\N$ be a prime, and let $P$ be the Sylow $p$-subgroup of $T$. For $x\in X$, let $n_x$ be any natural number
such that $x^{n_x}\in\overline{R}$, that is $(x^\pi)^{n_x}=1$, and set $m_x=v_p(n_x)$. Then $P$ has presentation 
$\langle X\,|\,  R\cup V\rangle$,
where $V=\{x^{p^{m_x}}\,|\, x\in X\}$.
\end{cor}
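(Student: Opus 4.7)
The plan is to invoke Proposition~\ref{preex} with $U=P$ and $\sigma\colon T\to P$ the projection afforded by the direct-product decomposition of the finite nilpotent group $T$ into its Sylow subgroups. Specifically, I would write $T=P\times Q$, where $Q$ is the Hall $p'$-complement; then $\sigma$ is projection along $Q$, with kernel $Q$, and the whole task reduces to verifying that $\langle V\rangle^\pi=Q$.

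For each $x\in X$, set $t=x^\pi$ and decompose $t=t_p t_{p'}$ with $t_p\in P$, $t_{p'}\in Q$, the two factors commuting. Since $t^{n_x}=1$ and $n_x=p^{m_x}q_x$ with $\gcd(p,q_x)=1$, the order of $t_p$ divides $p^{m_x}$ and the order of $t_{p'}$ divides $q_x$. Hence $t_p^{p^{m_x}}=1$ and
$$
t^{p^{m_x}}=t_{p'}^{p^{m_x}}.
$$
Because $\gcd(p^{m_x},|t_{p'}|)=1$, the element $t_{p'}^{p^{m_x}}$ generates the same cyclic subgroup of $Q$ as $t_{p'}$ itself.

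It follows that the normal closure in $T$ of $\{t^{p^{m_x}}:x\in X\}$ coincides with the normal closure of $\{t_{p'}:x\in X\}$. The latter set generates $Q$, since the projection $T\to Q$ is a homomorphism that carries the generating set $\{x^\pi:x\in X\}$ of $T$ to a generating set of $Q$; normality of $Q$ in $T$ then gives that this generating set already has normal closure $Q$. I conclude $\langle V\rangle^\pi=Q=\ker(\sigma)$, and Proposition~\ref{preex} immediately yields the presentation $\langle X\mid R\cup V\rangle$ of $P$.

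The only mildly delicate point in the argument is the middle step: one must notice that raising to the $p^{m_x}$ power simultaneously annihilates the $p$-component of $t$ and preserves the cyclic subgroup generated by its $p'$-component, so that adjoining the relations $x^{p^{m_x}}=1$ to $R$ is precisely strong enough to wipe out $Q$ without touching $P$. Everything else is formal manipulation of Proposition~\ref{preex}.
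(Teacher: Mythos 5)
Your proof is correct and takes essentially the same route as the paper: decompose $T=P\times Q$, take $\sigma$ to be the projection onto $P$ with kernel $Q$, check that $\langle V\rangle^\pi=Q=\ker(\sigma)$, and invoke Proposition~\ref{preex}; the paper simply asserts the middle equality, where you supply the verification. The only remark is that your detour through normal closures is unnecessary: since each $(x^\pi)^{p^{m_x}}$ generates the same cyclic subgroup as the $p'$-part $t_{p'}$, and the elements $t_{p'}$ generate $Q$ as images of the generators of $T$ under the projection $\tau$, you already obtain the subgroup-level equality $\langle V\rangle^\pi=Q$ that Proposition~\ref{preex} requires.
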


\begin{proof} As $T$ is a finite nilpotent group, we have $T=P\times Q$, where $Q$ is the direct product of all
other Sylow subgroups of $T$. Let $\sigma:T\to P$ and $\tau:T\to Q$ be projections corresponding to the decomposition 
$T=P\times Q$. Then $\langle V\rangle^\pi=Q=\ker(\sigma)$, so Proposition \ref{preex} applies.
\end{proof}

We next apply Corollary \ref{precor} when $T=G$ has presentation 
$\langle x,y\,|\, x^{[x,y]}=x^{\alpha},\, y^{[y,x]}=y^\alpha\rangle$.

\begin{theorem}\label{predela} The Sylow $p$-subgroup $J$ of $G$ has presentation
\begin{equation}
\label{sp1}
\langle x,y\,|\, x^{[x,y]}=x^\alpha,\, y^{[y,x]}=y^\alpha, x^{p^{3m}}=1, y^{p^{3m}}=1\rangle\text{ in Case 1},
\end{equation}
\begin{equation}
\label{sp2}
\langle x,y\,|\, x^{[x,y]}=x^\alpha,\, y^{[y,x]}=y^\alpha, x^{2^{3m-1}}=1, y^{2^{3m-1}}=1\rangle\text{ in Case 2},
\end{equation}
\begin{equation}
\label{sp3}
\langle x,y\,|\, x^{[x,y]}=x^\alpha,\, y^{[y,x]}=y^\alpha, x^{81}=1, y^{81}=1\rangle\text{ in Case 3}.
\end{equation}
\end{theorem}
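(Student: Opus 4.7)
The plan is to deduce the theorem by a direct application of Corollary~\ref{precor} to $T=G$ with its defining presentation $\langle x,y\,|\,x^{[x,y]}=x^\alpha,\,y^{[y,x]}=y^\alpha\rangle$. The hypotheses are satisfied because $G$ is finite, by \cite[Section~4]{M}, and nilpotent, by Theorem~\ref{nul}. It therefore suffices, for each of the three cases, to exhibit a natural number $n$ with $A^n=1$ in $G$ whose $p$-adic valuation equals the target exponent: $3m$ in Case~1, $3m-1$ in Case~2, and $4$ in Case~3. The analogous relation for $B$ then follows from the automorphism $\theta$ interchanging $A$ and $B$, and the presentations (\ref{sp1})--(\ref{sp3}) drop out of Corollary~\ref{precor}.

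For Cases~1 and~2 a single relation will suffice. If $\alpha>0$, I would take $n=|(\alpha-1)\gamma|$ and invoke (\ref{gamita3}) for $A^n=1$, then read off $v_p(n)=m+v_p(\gamma)$ from Proposition~\ref{calc1}. If $\alpha<0$, I would take $n=|(\alpha-1)\xi|$ and use (\ref{chigom2}) together with Proposition~\ref{calc4}. In both regimes the arithmetic yields $v_p(n)=3m$ in Case~1 and $v_p(n)=3m-1$ in Case~2.

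Case~3 will require splitting further by the residue of $k$ modulo $9$, where $\alpha=1+3k$ with $k\equiv -1\mod 3$. If $k\not\equiv -1\mod 9$, so that $k=-1+3u$ with $\gcd(3,u)=1$, Propositions~\ref{calc1} and~\ref{calc4} supply $v_3(\gamma)=3$ (when $\alpha>0$) or $v_3(\xi)=3$ (when $\alpha<0$), and hence $(\alpha-1)\gamma$ or $(\alpha-1)\xi$ has valuation exactly $4$. If instead $k\equiv -1\mod 9$, neither $\gamma$ nor $\xi$ is available with valuation $3$, and I would instead use $A^{(\alpha-1)\mu_0}=1$ from Proposition~\ref{calc3}, which is applicable since $\alpha\neq -1$ throughout Case~3. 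Combining Proposition~\ref{calc2} with $v_3(\alpha)=0$ then yields $v_3((\alpha-1)\mu_0)=4$. The edge case $\alpha=-2$, where Proposition~\ref{calc4} does not apply, fits into this second sub-case since $k=-1\equiv -1\mod 9$.

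The main obstacle will be the bookkeeping in Case~3, where the valid relation has to be pieced together from different sources according to the residue of $k$ modulo $9$ and the sign of $\alpha$. Once it is verified that the two sub-cases exhaust Case~3 and that the valuations produced by Propositions~\ref{calc1},~\ref{calc2}, and~\ref{calc4} all come out to exactly~$4$, the presentations (\ref{sp1})--(\ref{sp3}) follow mechanically from Corollary~\ref{precor}.
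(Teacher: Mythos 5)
Your proposal is correct and follows essentially the same route as the paper: apply Corollary~\ref{precor} to the defining presentation of $G$, using the relations $A^{(\alpha-1)\gamma}=1$ from (\ref{gamita3}) (or $A^{(\alpha-1)\xi}=1$ from (\ref{chigom2}) when $\alpha<0$) with the valuations of Propositions~\ref{calc1} and~\ref{calc4} in Cases~1, 2, and the sub-case of Case~3 with $v_3(k+1)=1$, and $A^{(\alpha-1)\mu_0}=1$ from Proposition~\ref{calc3} with Proposition~\ref{calc2} when $k\equiv-1\bmod 9$. Your explicit check that $\alpha=-2$ lands in the latter sub-case (so the exclusion in Proposition~\ref{calc4} is harmless) is a small point the paper leaves implicit, but the argument is the same.
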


\begin{proof} Suppose first we are in Case 1, Case  2, or else Case 3 with $\alpha=1+3k$,
$k=-1+3u$, and $\gcd(3,u)=1$. 
If $\alpha>0$ the result follows from (\ref{gamita3}), Proposition \ref{calc1}, and Corollary \ref{precor}. If $\alpha<0$ 
the result follows from (\ref{chigom2}), Proposition \ref{calc4}, and Corollary \ref{precor}.

Suppose next we are in Case 3, with $\alpha=1+3k$, $k=-1+9v$, and $v\in\Z$. The
result then follows from Proposition  \ref{calc2}, Proposition \ref{calc3}, and Corollary \ref{precor}.
\end{proof}

The given presentations
make it obvious that the isomorphism type of $J(\alpha)$ is invariant within the congruence class of $\alpha$ modulo~$p^{3m}$
for (\ref{sp1}), modulo $2^{3m-1}$ for (\ref{sp2}), and modulo 81 for (\ref{sp3}). 
We may thus assume without loss that $\alpha>0$ and we make this assumption for the remainder of the paper.

\section{An upper bound for the order of $J$}\label{secv4}

We see from Theorem \ref{predela} that $J$ is generated by elements $A$ and $B$ subject to the defining relations 
$A^{[A,B]}=A^\alpha$, $B^{[B,A]}=B^\alpha$, as well as $A^{p^{3m}}=1=B^{p^{3m}}$ in Case 1, 
$A^{2^{3m-1}}=1=B^{2^{3m-1}}$ in Case 2,
and $A^{81}=1=B^{81}$ in Case 3. We also have the additional relations $A^{p^{2m}}B^{p^{2m}}=1$ in Case 1 and
 $A^{2^{2m-1}}B^{2^{2m-1}}=1$ in Case 2, due to (\ref{gamita3}) and Proposition \ref{calc1}, as well as
$A^{27}B^{27}=1$ in Case 3, due to Propositions \ref{calc2} and \ref{calc3}. 
In any case, we set $C=[A,B]$. We proceed to derive further relations amongst $A$, $B$, and $C$, as well
as additional properties of $J$.

The defining relations of $J$ ensure the existence of an automorphism $A\leftrightarrow B$, $C\leftrightarrow C^{-1}$, of $J$.
Alternatively, use the fact that $J$ is a characteristic subgroup of $G$. 

\begin{lemma}\label{dec} Let $T$ be a group generated by elements $X,Y,Z$ of finite order satisfying:

$$X^Z=X^{u},\; Y^Z=Y^{v}\text{ for some integers }u,v;\; X^Y\in \langle X,Z\rangle;\; Y^X\in \langle Y,Z\rangle.
$$
Then $T$ is the product of the subgroups $\langle X\rangle,\langle Y\rangle, \langle Z\rangle$ in any of the six possible orderings.
In particular, $T$ is finite group whose order is a factor of $|X||Y||Z|$.
\end{lemma}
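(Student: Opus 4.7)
The plan is to show $T=\langle X\rangle\langle Y,Z\rangle$ by establishing that this product of subgroups is itself a subgroup; since it contains $X,Y,Z$, it must coincide with $T$. Once this is done, decomposing $\langle Y,Z\rangle$ in two ways and using the $X\leftrightarrow Y$ symmetry of the hypotheses will deliver all six orderings, and finiteness follows from the subgroup product formula.

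First I would record the relevant commutation rules. The hypothesis $X^Z=X^u$ rewrites as $XZ=ZX^u$, showing that $\langle X\rangle$ is normalized by $Z$; consequently $\langle X,Z\rangle=\langle X\rangle\langle Z\rangle=\langle Z\rangle\langle X\rangle$, and iteration yields $X^aZ^b=Z^bX^{au^b}$ for every $a,b\in\Z$. The analogous conclusions hold for the pair $(Y,Z)$, so that $\langle Y,Z\rangle=\langle Y\rangle\langle Z\rangle=\langle Z\rangle\langle Y\rangle$.

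Next I would verify the equality $\langle X\rangle\langle Y,Z\rangle=\langle Y,Z\rangle\langle X\rangle$, which forces $S:=\langle X\rangle\langle Y,Z\rangle$ to be a subgroup. For $\langle X\rangle\langle Y,Z\rangle\subseteq\langle Y,Z\rangle\langle X\rangle$ it suffices to check each of $XY$ and $XZ$ lies on the right-hand side and to iterate. Writing $X^Y=X^aZ^b$ and invoking the commutation rule from the preceding paragraph,
$$
XY=Y\cdot X^Y=YX^aZ^b=YZ^bX^{au^b}\in\langle Y,Z\rangle\cdot\langle X\rangle,
$$
while $XZ=ZX^u\in\langle Y,Z\rangle\cdot\langle X\rangle$ is immediate. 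The reverse inclusion uses $Y^X=Y^cZ^d$ to give $YX=X\cdot Y^cZ^d\in\langle X\rangle\cdot\langle Y,Z\rangle$, together with $ZX\in\langle Z\rangle\langle X\rangle\subseteq\langle X\rangle\cdot\langle Y,Z\rangle$. So $S$ is a subgroup; since it contains $X,Y,Z$ we have $T=S=\langle X\rangle\langle Y,Z\rangle$.

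Substituting the two forms of $\langle Y,Z\rangle$ into $T=\langle X\rangle\langle Y,Z\rangle=\langle Y,Z\rangle\langle X\rangle$ gives the four orderings with $X$ in an extreme position, namely $\langle X\rangle\langle Y\rangle\langle Z\rangle$, $\langle X\rangle\langle Z\rangle\langle Y\rangle$, $\langle Y\rangle\langle Z\rangle\langle X\rangle$, $\langle Z\rangle\langle Y\rangle\langle X\rangle$. For the remaining two, $\langle Y\rangle\langle X\rangle\langle Z\rangle$ and $\langle Z\rangle\langle X\rangle\langle Y\rangle$, I would rerun the argument after swapping $X$ and $Y$, which is valid since the four hypotheses are symmetric under this swap. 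Finally $|T|=|\langle X\rangle\cdot\langle Y,Z\rangle|$ divides $|X|\cdot|\langle Y,Z\rangle|$ by the standard product formula for subgroups, and $|\langle Y,Z\rangle|=|\langle Y\rangle\cdot\langle Z\rangle|$ divides $|Y|\cdot|Z|$ for the same reason, so $|T|$ is a factor of $|X||Y||Z|$. The single non-routine step is the computation $XY=YZ^bX^{au^b}$, which is the only place where both non-trivial hypotheses $X^Y\in\langle X,Z\rangle$ and $X^Z=X^u$ must be combined; every other rewriting rule in the proof follows from a single hypothesis.
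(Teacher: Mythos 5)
Your overall route is sound and, in substance, close to the paper's: both arguments turn on the same key rewriting $XY=Y\cdot X^Y$ with $X^Y\in\langle X,Z\rangle=\langle Z\rangle\langle X\rangle$. The difference is organizational — the paper shows that conjugation by $Y$ maps the finite set $\langle X\rangle\langle Z\rangle\langle Y\rangle$ into itself, deduces that this set is closed under multiplication and hence (being finite) a subgroup, whereas you prove permutability of the two subgroups $\langle X\rangle$ and $\langle Y,Z\rangle$. Both versions then obtain all six orderings from $\langle X\rangle\langle Z\rangle=\langle Z\rangle\langle X\rangle$, $\langle Y\rangle\langle Z\rangle=\langle Z\rangle\langle Y\rangle$ and the $X\leftrightarrow Y$ symmetry, and the divisibility claim in the same way.

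The one place where your write-up is thinner than it should be is the sentence ``it suffices to check each of $XY$ and $XZ$ lies on the right-hand side and to iterate.'' As a criterion this is not sufficient as stated: after pushing a single generator of $\langle Y,Z\rangle$ past $X$ you are left with $X^m$ (for some $m\neq 1$) in front of the rest of the word, so the iteration needs the corresponding statements for \emph{all} powers of $X$, together with an induction over the elements $Z^jY^i$ of $\langle Y,Z\rangle=\langle Z\rangle\langle Y\rangle$. The repair is immediate with your own ingredients: $X^nZ^j=Z^jX^{nu^j}$ you already have (for $j\ge 0$, which suffices since $Z$ has finite order; for negative $j$ your formula with exponent $u^j$ is not literally an integer), and $(X^n)^Y=(X^Y)^n\in\langle X,Z\rangle=\langle Z\rangle\langle X\rangle$ gives $X^nY\in\langle Y,Z\rangle\langle X\rangle$ for every $n$; then induct on $i$ in $X^nZ^jY^i$. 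Note also that once $\langle X\rangle\langle Y,Z\rangle\subseteq\langle Y,Z\rangle\langle X\rangle$ is established, the reverse inclusion is free by taking inverses of both sides, and you should use this rather than argue it directly: in that direction the ``iteration'' is genuinely more delicate, since $Z^X$ need not lie in $\langle Y,Z\rangle$ (it equals $ZX^{1-u}$), so conjugating $Y$ repeatedly by $X$ leaves $\langle Y,Z\rangle$ and the computation $YX=XY^cZ^d$ does not propagate as painlessly as your sketch suggests.
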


\begin{proof} As $Z$ normalizes $\langle X\rangle$ and $\langle Y\rangle$, we have
$$
\langle X,Z\rangle=\langle X\rangle\langle Z\rangle=\langle Z\rangle\langle X\rangle,\;
\langle Y,Z\rangle=\langle Y\rangle\langle Z\rangle=\langle Z\rangle\langle Y\rangle.
$$
By hypothesis, conjugation by $Y$ sends $\langle X\rangle$ into $\langle X\rangle\langle Z\rangle$; as conjugation by $Y$ preserves 
$\langle Z\rangle \langle Y\rangle$,
we see that conjugation by $Y$ preserves $\langle X\rangle\langle Z\rangle\langle Y\rangle$. Thus 
$Y^{-i} \langle X\rangle\langle Z\rangle\langle Y\rangle Y^{i}\subseteq 
\langle X\rangle\langle Z\rangle\langle Y\rangle$ for any $i\in\Z$, which implies 
$\langle Y\rangle\langle X\rangle\langle Z\rangle\langle Y\rangle\subseteq\langle X\rangle\langle Z\rangle\langle Y\rangle$.
As the reverse inclusion is clear, we have
$\langle Y\rangle\langle X\rangle\langle Z\rangle\langle Y\rangle=\langle X\rangle\langle Z\rangle\langle Y\rangle$.
This shows that $\langle X\rangle\langle Z\rangle\langle Y\rangle$ is closed under multiplication. As $\langle X\rangle\langle Z\rangle \langle Y\rangle$ is a finite subset of $T$, it is a subgroup of $T$. Since $\langle X\rangle\langle Z\rangle \langle Y\rangle$ contains $X$, $Y$ and $Z$,  we infer $T=\langle X\rangle\langle Z\rangle \langle Y\rangle$. 

Our hypotheses ensure that the roles of $X$ and $Y$ are interchangeable in the above argument, so $T$ is the product of 
$\langle X\rangle,\langle Z\rangle, \langle Y\rangle$ in any order. Since the order of $\langle X,Z\rangle=
\langle X\rangle\langle Z\rangle$ is a factor of $|X||Z|$, it follows that $|T|$ is a factor of $|X||Z||Y|$.
\end{proof}

\begin{lemma}\label{valdif} Let $a,b,k$ be integers such that $a\geq 1$ and $b\geq 0$, and suppose that $p$ is odd.
Then there is an integer $t$ such that
$$
(1+kp^a)^{p^b}=1+k p^{a+b}+kt p^{2a+b}.
$$
\end{lemma}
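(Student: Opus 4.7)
The plan is to proceed by induction on $b$. The base case $b=0$ will be trivial, taking $t=0$, since $(1+kp^a)^1=1+kp^{a+0}+k\cdot 0\cdot p^{2a+0}$.

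For the inductive step, assuming the identity at level $b$ with some integer $t$, I will set $N=kp^{a+b}(1+tp^a)=kp^{a+b}+ktp^{2a+b}$, so that $(1+kp^a)^{p^b}=1+N$, and expand via the binomial theorem:
$$(1+kp^a)^{p^{b+1}}=(1+N)^p=1+pN+\sum_{j=2}^{p}\binom{p}{j}N^j.$$
The linear term $pN=kp^{a+(b+1)}+ktp^{2a+(b+1)}$ will already supply the two required main terms at level $b+1$, so the task will reduce to showing that each summand $\binom{p}{j}N^j$ with $j\geq 2$ has the form $k\cdot(\text{integer})\cdot p^{2a+b+1}$.

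Divisibility by $k$ will be automatic since $N^j$ carries a factor of $k^j$. For the $p$-adic valuation, two subcases will arise. When $2\leq j\leq p-1$, I will combine $v_p\binom{p}{j}=1$ with $v_p(N^j)\geq j(a+b)\geq 2(a+b)$ to obtain $v_p(\binom{p}{j}N^j)\geq 1+2(a+b)\geq 2a+b+1$, where the last step uses $b\geq 0$. When $j=p$, the binomial coefficient contributes nothing, but $v_p(N^p)=p(a+b)\geq 3(a+b)\geq 2a+b+1$, where the last step uses $a\geq 1$ and $p\geq 3$. Summing and relabelling will then yield an integer $t'$ with $(1+kp^a)^{p^{b+1}}=1+kp^{a+(b+1)}+kt'p^{2a+(b+1)}$, closing the induction.

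The odd-prime hypothesis will enter crucially only in the $j=p$ case: for $p=2$ there is no range $2\leq j\leq p-1$, and the lone extra term $N^2$ has $v_2(N^2)=2(a+b)$ with no binomial coefficient available to boost its valuation, so the required bound $v_2\geq 2a+b+1$ would fail at $b=0$. No serious obstacle is anticipated; the whole proof is just careful bookkeeping of $p$-adic valuations in a binomial expansion.
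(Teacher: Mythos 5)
Your induction on $b$ is exactly the argument the paper has in mind (its proof is stated simply as ``follows easily by induction on $b$''), and your bookkeeping is correct: the terms $\binom{p}{j}N^j$ for $2\le j\le p-1$ gain a factor $p$ from the binomial coefficient, the $j=p$ term has valuation $p(a+b)\ge 2a+b+1$ precisely because $p\ge 3$ and $a\ge 1$, and every such term retains a factor $k$. Your closing remark about why the statement fails for $p=2$ also matches the paper, which treats that case separately in Lemma~\ref{valdif2} with the extra term $k^2 2^{2a+b-1}$.
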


\begin{proof} This follows easily by induction on $b$.
\end{proof}

\begin{lemma}\label{valdif2} Let $a,b,k$ be integers with $a,b\geq 1$.
Then there is an integer $t$ such that
$$
(1+k2^a)^{2^b}=1+k 2^{a+b}+k^2 2^{2a+b-1}+kt 2^{2a+b}.
$$
\end{lemma}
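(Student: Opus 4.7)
My plan is to prove this by induction on $b\geq 1$, exactly as in the proof of Lemma~\ref{valdif}. The only difference from the odd prime case is the appearance of the term $k^2 2^{2a+b-1}$, which arises because $\binom{2^b}{2}=2^{b-1}(2^b-1)$ contributes a factor of $2^{b-1}$ rather than $2^b$; everything else proceeds in parallel.

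For the base case $b=1$, I would just expand
$$(1+k2^a)^2=1+k2^{a+1}+k^2 2^{2a},$$
which matches the claim with $t=0$ (noting $2a+b-1=2a$). For the inductive step, I would write $(1+k2^a)^{2^b}=1+u$ where, by hypothesis, $u=k 2^{a+b}+k^2 2^{2a+b-1}+kt\, 2^{2a+b}$, and then square:
$$(1+k2^a)^{2^{b+1}}=1+2u+u^2=1+k 2^{a+b+1}+k^2 2^{2a+b}+kt\, 2^{2a+b+1}+u^2.$$
The three visible terms after $1$ already have the desired shape, so it remains only to show that $u^2$ is absorbable into the error term $k t'\, 2^{2a+b+1}$.

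The key observation is that, since $a\geq 1$, each summand of $u$ is divisible by $k\cdot 2^{a+b}$, so I can factor $u=k\cdot 2^{a+b}\cdot w$ for an explicit integer $w=1+k2^{a-1}+t\,2^a$. Squaring gives
$$u^2=k^2\cdot 2^{2a+2b}\cdot w^2=k\cdot 2^{2a+b+1}\cdot\bigl(k\, 2^{b-1} w^2\bigr),$$
and the parenthesized quantity is an integer because $b\geq 1$. Setting $t'=t+k\,2^{b-1}w^2$ closes the induction.

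I do not expect any real obstacle: the whole argument is a short 2-adic expansion, and the only mild subtlety is tracking the single-power loss in the $k^2$ term caused by working at $p=2$. Provided I keep the hypothesis $a\geq 1$ (so that $2^{a-1}\in\Z$) and $b\geq 1$ (so that $2^{b-1}\in\Z$) visibly in use, the computation goes through with no further effort.
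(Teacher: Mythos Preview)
Your proof is correct and follows exactly the approach the paper indicates (induction on $b$), supplying the details that the paper omits. The base case and the squaring step are both handled cleanly, and your use of $a\geq 1$ and $b\geq 1$ to ensure the auxiliary quantities $w$ and $k\,2^{b-1}w^2$ are integers is precisely what is needed.
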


\begin{proof} This follows easily by induction on $b$.
\end{proof}

\begin{prop}\label{masrex} Every element of $J$ can be written in the form  $A^iB^jC^k$, where
$0\leq i<i_0$, $0\leq j<j_0$, $0\leq k<k_0$, and $(i_0,j_0,k_0)=(p^{3m},p^{2m},p^{2m})$
in Case 1, in which case $A^{p^{3m}}=B^{p^{3m}}=C^{p^{2m}}=A^{p^{2m}}B^{p^{2m}}=1$ and $|J|\leq p^{7m}$,
$(i_0,j_0,k_0)=(2^{3m-1},2^{2m-1},2^{2m-1})$ 
in Case 2, in which case $A^{2^{3m-2}}=C^{2^{2m-1}}=B^{2^{3m-2}}$, $A^{2^{3m-1}}=B^{2^{3m-1}}=C^{2^{2m}}=A^{2^{2m-1}}B^{2^{2m-1}}=1$,
and $|J|\leq 2^{7m-3}$, and
$(i_0,j_0,k_0)=(81,27,27)$ 
in Case 3, in which case $A^{81}=B^{81}=C^{27}=A^{27}B^{27}=1$ and~$|J|\leq 3^{10}$.
\end{prop}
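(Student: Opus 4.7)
The plan is to apply Lemma~\ref{dec} to obtain a product decomposition $J=\langle A\rangle\langle B\rangle\langle C\rangle$, then derive the additional order and mixed relations needed to bring elements into the normal form $A^iB^jC^k$ within the claimed ranges. With $X=A$, $Y=B$, $Z=C$, the hypotheses of Lemma~\ref{dec} hold immediately: the defining relations give $A^C=A^\alpha$ and $B^C=B^{\alpha^{-1}}$ (well defined since $\alpha$ is coprime to $p$), while $A^B = AC \in \langle A,C\rangle$ and $B^A = BC^{-1} \in \langle B,C\rangle$ from the definition of $C=[A,B]$; and $A,B,C$ have finite order since $G$ is finite. Thus every element of $J$ is of the form $A^iB^jC^k$, and $|J|$ divides $|A|\cdot|B|\cdot|C|$.

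The exponent bounds on $A$ and $B$ come from Theorem~\ref{predela}. The mixed relations $A^{p^{2m}}B^{p^{2m}}=1$ (Case~1), $A^{2^{2m-1}}B^{2^{2m-1}}=1$ (Case~2), and $A^{27}B^{27}=1$ (Case~3) follow upon taking $p$-parts: in Cases 1, 2 one uses (\ref{gamita3}) combined with Proposition~\ref{calc1}, and in Case~3 one uses Proposition~\ref{calc3} combined with Proposition~\ref{calc2}. The passage from $\gamma$ (or $\mu_0$) to its $p$-part exploits the centrality of the relevant powers of $A$ and $B$, established in the proof of Theorem~\ref{nul}, and the coprimality of the unit factor to $p$. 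This same centrality lets one absorb any $B^j$ with $j\ge p^{2m}$ into the $A$-factor, reducing $j$ modulo $p^{2m}$ (analogously in Cases 2 and 3).

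It remains to bound $|C|$. Since $B^{p^{2m}}\in Z(G)$ in Case~1 (with analogs in Cases~2, 3), conjugation by $A$ fixes it, giving $(BC^{-1})^{p^{2m}}=(B^A)^{p^{2m}}=B^{p^{2m}}$. On the other hand, the identity $BC^{-1}=C^{-1}B^\alpha$ (a rewriting of $CBC^{-1}=B^\alpha$) and a straightforward induction yield
$$
(BC^{-1})^{n} \;=\; C^{-n}\,B^{\alpha(1+\alpha+\cdots+\alpha^{n-1})} \;=\; C^{-n}B^{\alpha S_n}.
$$
Equating these two displays produces $C^{-n}=B^{n-\alpha S_n}$. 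A direct application of Lemma~\ref{valdif} (Cases~1 and 3) or Lemma~\ref{valdif2} (Case~2) to $S_n=(\alpha^n-1)/(\alpha-1)$ gives $\alpha S_n\equiv n\pmod{p^{3m}}$ in Case~1 with $n=p^{2m}$, and $\alpha S_n\equiv 27\pmod{81}$ in Case~3 with $n=27$, whence $C^{p^{2m}}=1$ and $C^{27}=1$ respectively. In Case~2, with $n=2^{2m-1}$, the subleading term of Lemma~\ref{valdif2} produces $\alpha S_n\equiv 2^{2m-1}+k'\,2^{3m-2}\pmod{2^{3m-1}}$ (where $\alpha-1=k'\,2^m$, $k'$ odd); this forces $C^{2^{2m-1}}=B^{2^{3m-2}}$, and applying the automorphism $\theta:A\leftrightarrow B$, $C\leftrightarrow C^{-1}$ together with the self-inverse property $A^{2^{3m-2}}=A^{-2^{3m-2}}$ (consequence of $A^{2^{3m-1}}=1$) promotes this to $A^{2^{3m-2}}=C^{2^{2m-1}}=B^{2^{3m-2}}$, whence also $C^{2^{2m}}=1$; this extra relation reduces $k$ further modulo $2^{2m-1}$. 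Collecting the reductions gives the stated normal-form ranges and the bounds $|J|\le p^{7m},\,2^{7m-3},\,3^{10}$. The main obstacle is the precise valuation arithmetic identifying $\alpha S_n$ modulo the appropriate $p$-power; this is most delicate in Case~2, where the subleading term of Lemma~\ref{valdif2} must be tracked and, through $A^{2^{3m-1}}=1$, shown to coincide with the common value of $A^{2^{3m-2}}$ and $B^{2^{3m-2}}$.
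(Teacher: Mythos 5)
Your proposal is correct and follows essentially the same route as the paper: invoke Lemma~\ref{dec} for the product decomposition, extract the central relations $A^{p^{2m}}B^{p^{2m}}=1$ (and analogues) from (\ref{gamita3}), Propositions~\ref{calc1}, \ref{calc2}, \ref{calc3}, and then pin down the order of $C$ (resp.\ the relation $C^{2^{2m-1}}=A^{2^{3m-2}}=B^{2^{3m-2}}$ in Case~2) by expanding a $p^{2m}$-th power and applying Lemmas~\ref{valdif} and \ref{valdif2}. The only cosmetic difference is that you conjugate the central element $B^{p^{2m}}$ by $A$ and expand $(BC^{-1})^n$, while the paper conjugates $A^{p^{2m}}$ by $B$ and expands $(AC)^n$; these are mirror images under the automorphism $\theta$ and yield identical congruence computations.
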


\begin{proof} Suppose first we are in Case 1. 
Then $A^{p^{2m}}B^{p^{2m}}=1$, so $A^{p^{2m}}\in Z(J)$. Therefore,
$$
A^{p^{2m}}=(A^{p^{2m}})^B=(A^B)^{p^{2m}}=(AC)^{p^{2m}}=
C^{p^{2m}}A^{\alpha(1+\alpha+\cdots+\alpha^{p^{2m}-1})}.
$$
Using Lemma \ref{valdif} with $\alpha=1+kp^m$ and $b=2m$, we see that $(\alpha^{p^{2m}}-1)/(\alpha-1)\equiv p^{2m}\mod p^{3m}$,
so $\alpha(\alpha^{p^{2m}}-1)/(\alpha-1)\equiv p^{2m}\mod p^{3m}$, and $C^{p^{2m}}=1$. The result now
follows from Lemma \ref{dec}.

Suppose next we are in Case 2. Then $A^{2^{2m-1}}B^{2^{2m-1}}=1$, so
$A^{2^{2m-1}}\in Z(J)$. Therefore,
$$
A^{2^{2m-1}}=(A^{2^{2m-1}})^B=(A^B)^{2^{2m-1}}=(AC)^{2^{2m-1}}=
C^{2^{2m-1}}A^{\alpha(1+\alpha+\cdots+\alpha^{2^{2m-1}-1})}.
$$
Using Lemma \ref{valdif2} with $\alpha=1+k2^m$, with $k$ odd, and $b=2m-1$, 
we see that $(\alpha^{2^{2m-1}}-1)/(\alpha-1)\equiv 2^{2m-1}+2^{3m-2}\mod 2^{3m-1}$,
whence $\alpha(\alpha^{2^{2m-1}}-1)/(\alpha-1)\equiv 2^{2m-1}+2^{3m-2}\mod 2^{3m-1}$, so $C^{2^{2m-1}}A^{2^{3m-2}}=1$. 
As $A^{2^{3m-2}}$ is its own inverse, it is equal to both $C^{2^{2m-1}}$ and $B^{2^{3m-2}}$, so $C^{2^{2m}}=1$. The result now
follows from Lemma \ref{dec}.

Suppose finally we are in Case 3.
Then $A^{27}B^{27}=1$, so $A^{27}\in Z(J)$. Therefore,
$$
A^{27}=(A^{27})^B=(A^B)^{27}=(AC)^{27}=
C^{27}A^{\alpha(1+\alpha+\cdots+\alpha^{26})}.
$$
Using Lemma \ref{valdif} with $\alpha=1+3k$ and $b=3$, we see that $(\alpha^{27}-1)/(\alpha-1)\equiv 27\mod 81$,
whence $\alpha(\alpha^{27}-1)/(\alpha-1)\equiv 27\mod 81$, so $C^{27}=1$. The result now
follows from Lemma \ref{dec}.
\end{proof}

\section{Order and basic properties of $J$}\label{secv6}

\begin{theorem}\label{main1} Every element of $J$ can be written uniquely in the form  $A^iB^jC^k$, where
$0\leq i<i_0$, $0\leq j<j_0$, $0\leq k<k_0$, and $(i_0,j_0,k_0)=(p^{3m},p^{2m},p^{2m})$
in Case 1, in which case $|J|=p^{7m}$,
$(i_0,j_0,k_0)=(2^{3m-1},2^{2m-1},2^{2m-1})$ 
in Case 2, in which case $|J|=2^{7m-3}$, and
$(i_0,j_0,k_0)=(81,27,27)$ in Case 3, in which case $|J|=3^{10}$.
\end{theorem}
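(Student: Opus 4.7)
The plan is to leverage Proposition \ref{masrex} for one direction and obtain the reverse inequality from explicit model groups. Proposition \ref{masrex} already shows that every element of $J$ is expressible as $A^i B^j C^k$ with $0 \leq i < i_0$, $0 \leq j < j_0$, $0 \leq k < k_0$, and consequently that $|J| \leq i_0 j_0 k_0$ in each of the three cases. Thus the entire content of Theorem \ref{main1} beyond what is already in Proposition \ref{masrex} is the matching lower bound $|J| \geq i_0 j_0 k_0$; once equality is established, uniqueness of the normal form follows automatically, since the spanning set $\{(i,j,k) : 0 \leq i < i_0,\, 0 \leq j < j_0,\, 0 \leq k < k_0\}$ and $J$ then have the same finite cardinality, so the natural surjection between them is a bijection.

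To produce the lower bound, I would, in each of Cases~1, 2, and 3, exhibit a finite group $M$ of order exactly $i_0 j_0 k_0$ possessing two generators $a, b$ which (together with $c = [a,b]$) satisfy all the defining relations of the presentation of $J$ given in Theorem \ref{predela}, as well as the auxiliary relations derived in Proposition \ref{masrex} (such as $C^{p^{2m}}=1$ and $A^{p^{2m}}B^{p^{2m}}=1$ in Case~1, the relations $A^{2^{3m-2}}=C^{2^{2m-1}}=B^{2^{3m-2}}$ in Case~2, and $A^{27}B^{27}=1$, $C^{27}=1$ in Case~3). The universal property of the presentation then supplies a surjection $J \twoheadrightarrow M$, yielding $|J| \geq |M| = i_0 j_0 k_0$ and closing the argument. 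These models are precisely what Theorems \ref{model}, \ref{model2}, and \ref{model3} of the appendix are designed to provide.

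The main obstacle — and the reason this step is postponed to an appendix — is the explicit construction of $M$ itself. A natural approach is to build $M$ by iterated extensions: begin with a cyclic group $\langle a\rangle$ of order $i_0$; adjoin a central element $c$ of order $k_0$ to form a two-step nilpotent group; and finally adjoin $b$ of order $j_0$ with conjugation action specified by $a^b = ac$ and subject to $b^{[b,a]} = b^\alpha$. Each extension step must be shown to be well-defined (the relevant cocycle/consistency conditions forcing the orders to match the claimed values) and to preserve the order of each generator. The delicate verifications rest on the valuation identities of Sections~\ref{secv} and~\ref{secv2}, and it is plausible that the most intricate case is Case~3, where the exotic relation $A^{27}B^{27}=1$ and the "accidental" equality $|J|=3^{10}$ force a careful cocycle adjustment rather than a generic Heisenberg-type construction. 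An alternative would be to realize $M$ inside a truncated Heisenberg-style matrix group over $\Z/p^N\Z$ for suitably large $N$, trading extension bookkeeping for explicit matrix computation, but requiring the same arithmetic input.
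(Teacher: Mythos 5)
Your proposal is correct and follows essentially the same route as the paper: Proposition \ref{masrex} gives the spanning normal form and the upper bound, the appendix models (Theorems \ref{model}, \ref{model2}, \ref{model3}) are by definition images of $J$ attaining that bound, and the resulting equality of cardinalities forces uniqueness. Your counting argument for uniqueness is exactly the implicit content of the paper's one-line proof, and your remarks on constructing the models by iterated cyclic extensions match the appendix's strategy.
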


\begin{proof} Immediate consequence of Proposition \ref{masrex} and Theorems \ref{model}, \ref{model2}, and \ref{model3}.
\end{proof}

In Proposition \ref{orden} below, the automorphism $A\leftrightarrow B$ of $J$ yields the corresponding
results when we interchange the roles of $A$ and $B$.

\begin{prop}\label{orden} (a) $A$ and $C$ have respective orders $p^{3m}$ and  $p^{2m}$ in Case 1,
$2^{3m-1}$ and $2^{2m}$ in Case 2, and $81$ and $27$ in Case 3. Moreover, 
$A^{2^{3m-2}}=C^{2^{2m-1}}$ has order 2 in Case 2.
 
(b) In Case 1, $\langle A,C\rangle=\langle A\rangle\rtimes \langle C\rangle$ has order $p^{5m}$.
In Case 2, $\langle A\rangle\cap \langle C\rangle=\langle A^{2^{3m-2}}\rangle$ has order~2, and
$\langle A, C\rangle$ has order $2^{5m-2}$, with defining relations
$A^{2^{3m-1}}=1, A^{2^{3m-2}}=C^{2^{2m-1}}, A^C=A^\alpha$.
In  Case 3, $\langle A,C\rangle=\langle A\rangle\rtimes \langle C\rangle$ has order $3^7$.

(c) The group $\langle A\rangle\cap\langle B\rangle=\langle A,C\rangle\cap \langle B\rangle$ is 
equal to $\langle A^{p^{2m}}\rangle$ and has order $p^{m}$ in Case 1, to $\langle A^{{2^{2m-1}}}\rangle$
and has order $2^m$ in Case 2, and to $\langle A^{27}\rangle$ and has order 3 in Case 3.
\end{prop}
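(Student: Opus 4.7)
My plan is to deduce all three parts from the normal form in Theorem \ref{main1} together with the auxiliary relations that appear explicitly inside the proof of Proposition \ref{masrex}---namely $C^{p^{2m}}=1$ in Case 1, $C^{2^{2m-1}}=A^{-2^{3m-2}}$ in Case 2, and $C^{27}=1$ in Case 3, as well as $A^{i_0}=1$ and $A^{p^{2m}}B^{p^{2m}}=1$ together with its Case 2 and Case 3 analogues. The recurring principle is that if a power $A^i$ with $0<i<i_0$ or $C^k$ with $0<k<k_0$ were to equal $1$, then that word and the empty word would be two distinct normal-form representatives of the identity, violating Theorem \ref{main1}.

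For part (a), the orders of $A$ and $C$ are at most $i_0$ and $k_0$ by Proposition \ref{masrex} and at least $i_0$ and $k_0$ by normal-form uniqueness. In Cases 1 and 3 this gives the stated values directly. In Case 2, Proposition \ref{masrex} only yields $C^{2^{2m}}=1$; I instead observe that $C^{2^{2m-1}}=A^{2^{3m-2}}\neq 1$ (because $0<2^{3m-2}<2^{3m-1}=|A|$), so $|C|=2^{2m}$, and the element $A^{2^{3m-2}}=C^{2^{2m-1}}$ has order $2$ because its square is $A^{2^{3m-1}}=1$.

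For part (b), the relation $A^C=A^\alpha$ makes $\langle A\rangle$ normal in $\langle A,C\rangle$, so $|\langle A,C\rangle|=|A|\,|C|/|\langle A\rangle\cap\langle C\rangle|$. In Cases 1 and 3, an equation $A^i=C^k$ with $i,k$ in the normal-form ranges forces $i=k=0$ by uniqueness, giving a trivial intersection and the asserted semidirect product. In Case 2, I extend $k$ up to $2^{2m}=|C|$ by using $C^{2^{2m-1}}=A^{2^{3m-2}}$ to rewrite $C^k$ with $2^{2m-1}\leq k<2^{2m}$; normal-form uniqueness then pins the intersection down to $\{1,A^{2^{3m-2}}\}$, so $|\langle A,C\rangle|=2^{3m-1}\cdot 2^{2m}/2=2^{5m-2}$. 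For the presentation, I let $H$ be the abstract group with the three stated relations; since those relations all hold in $\langle A,C\rangle$, there is a surjection $H\twoheadrightarrow\langle A,C\rangle$, so it suffices to show $|H|\leq 2^{5m-2}$. In $H$, the relation $A^C=A^\alpha$ together with $A^{2^{3m-1}}=1$ gives an exponent range $0\leq i<2^{3m-1}$; the chain $C^{2^{2m}}=(C^{2^{2m-1}})^2=A^{2^{3m-1}}=1$ bounds $k$ by $2^{2m}$; and finally $C^{2^{2m-1}}=A^{2^{3m-2}}$ lets me fold the top half of $\langle C\rangle$ into $\langle A\rangle$, leaving at most $2^{3m-1}\cdot 2^{2m-1}$ expressions $A^iC^k$.

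For part (c), I use $A^{p^{2m}}B^{p^{2m}}=1$ (and the analogous relations in Cases 2 and 3) together with the fact, already exploited in the proof of Proposition \ref{masrex}, that $A^{p^{2m}}$ is central in $J$. Writing $j=qp^{2m}+j'$ with $0\leq j'<p^{2m}$, this lets me rewrite $B^j$ as $A^{-p^{2m}q}B^{j'}$. If $A^iC^k=B^j$ for some element of $\langle A,C\rangle\cap\langle B\rangle$, bringing both sides to normal form and invoking uniqueness forces $k=0$ and $j'=0$, so $A^i\in\langle A^{p^{2m}}\rangle=\langle B^{p^{2m}}\rangle$; the reverse inclusion is trivial, and the order $p^m$ follows from $|A|=p^{3m}$. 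I expect the main obstacle to be the presentation step in part (b), Case 2: the reduction of an arbitrary word in $H$ to an expression $A^iC^k$ with both exponents in the desired ranges requires applying the three relations in the correct sequence, and one must confirm that $C^{2^{2m-1}}=A^{2^{3m-2}}$ does not inadvertently collapse $\langle C\rangle$ below its expected order $2^{2m}$.
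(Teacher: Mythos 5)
Your argument is correct, and for parts (a) and (b) it is essentially the paper's proof: read the orders off Proposition \ref{masrex} together with the uniqueness statement of Theorem \ref{main1}, identify $\langle A\rangle\cap\langle C\rangle$ the same way, and in Case 2 establish the presentation of $\langle A,C\rangle$ by von Dyck plus the easy bound $|H|\leq 2^{5m-2}$ for any group on two generators satisfying the three relations. The only genuine divergence is in part (c): the paper first identifies $\langle A\rangle\cap\langle B\rangle$ exactly as you do, but then pins down $\langle A,C\rangle\cap\langle B\rangle$ by counting, using $J=\langle A,C\rangle\langle B\rangle$ from Lemma \ref{dec} and the already-known orders $|J|$, $|\langle A,C\rangle|$, $|\langle B\rangle|$ to force $|\langle A,C\rangle\cap\langle B\rangle|=p^m$ (resp.\ $2^m$, $3$), whence it coincides with $\langle A\rangle\cap\langle B\rangle$. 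You instead compute $\langle A,C\rangle\cap\langle B\rangle$ directly: writing an element of $\langle B\rangle$ as $A^{-qp^{2m}}B^{j'}$ via the central relation $A^{p^{2m}}B^{p^{2m}}=1$ (and its analogues), and an element of $\langle A,C\rangle$ as $A^iC^k$ with $k$ in the normal-form range (after folding $C^{2^{2m-1}}=A^{2^{3m-2}}$ in Case 2), then invoking uniqueness. Both routes are sound; the paper's counting is shorter given Lemma \ref{dec} and part (b), while yours is self-contained at that point and makes the membership in $\langle A^{p^{2m}}\rangle$ explicit rather than inferred from cardinalities.
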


\begin{proof} (a) Proposition \ref{masrex} and the uniqueness statement of Theorem \ref{main1} imply 
that $A$ and $C$ have the stated order.

(b) Clearly $\langle C\rangle$ normalizes $\langle A\rangle$. Suppose first we are in Cases 1 or 3.
That $\langle A\rangle\cap \langle C\rangle$ is trivial
follows from part (a) and the uniqueness statement of Theorem \ref{main1}. That $\langle A,C\rangle=\langle A\rangle\rtimes \langle C\rangle$
has order $p^{5m}$ in Case 1 and order $3^7$ in Case 3 follows from part (a). Suppose next we are in Case 2.
That $\langle A\rangle\cap \langle C\rangle=\langle A^{2^{3m-2}}\rangle$ has order 2 follows from Proposition \ref{masrex} and
the uniqueness statement of Theorem~\ref{main1}. Thus,
$\langle A, C\rangle=\langle A\rangle \langle C\rangle$ has order $2^{5m-2}$ and satisfies the stated relations.
Any group generated by elements $A$ and $C$ satisfying the stated relations has order $\leq 2^{5m-2}$, so these
are defining relations.

(c) Proposition \ref{masrex} and the uniqueness statement of Theorem \ref{main1} imply
that $\langle A\rangle\cap\langle B\rangle$ is equal to $\langle A^{p^{2m}}\rangle$ in Case 1,
to $\langle A^{2^{2m-1}}\rangle$ in Case 2, and to $\langle A^{27}\rangle$ in Case 3. 
We have $J=\langle A,C\rangle\langle B\rangle$ by Lemma 
\ref{dec}, where $|J|=p^{7m}$, $|\langle A,C\rangle|=p^{5m}$, $|\langle B\rangle|=p^{3m}$ in Case 1,
$|J|=2^{7m-3}$, $|\langle A,C\rangle|=2^{5m-2}$, $|\langle B\rangle|=2^{3m-1}$ in Case 2, and
$|J|=3^{10}$, $|\langle A,C\rangle|=3^{7}$, $|\langle B\rangle|=3^{4}$ in Case 3.
Thus, $\langle A,C\rangle\cap \langle B\rangle$ must have order $p^{m}$ in Case~1,  $2^m$ in Case 2,
and $3$ in Case 3, and is therefore equal to $\langle A\rangle\cap\langle B\rangle$.
\end{proof}

\begin{prop}\label{centra} 
We have $C_J(A)=\langle A\rangle, C_J(B)=\langle B\rangle$ and $N_J(A)=\langle A,C\rangle, N_J(B)=\langle B,C\rangle$.
\end{prop}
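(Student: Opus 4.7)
The easy inclusions $\langle A\rangle\subseteq C_J(A)$ and $\langle A,C\rangle\subseteq N_J(A)$ are immediate from $A^A=A$ and $A^C=A^\alpha\in\langle A\rangle$. The statements for $B$ will then follow by applying the automorphism $A\leftrightarrow B$, $C\leftrightarrow C^{-1}$ of $J$, so I focus on $A$ throughout.

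For the reverse inclusion $N_J(A)\subseteq\langle A,C\rangle$, I use the set decomposition $J=\langle A,C\rangle\langle B\rangle$ provided by Lemma~\ref{dec} to write any $g\in J$ as $g=hB^{j}$ with $h\in\langle A,C\rangle$. Since $\langle A\rangle$ is stable under conjugation by $h$ (because $A^{C}=A^{\alpha}$), a one-line calculation gives $g\in N_J(A)$ iff $A^{B^j}\in\langle A\rangle$. The crucial step is an explicit formula for $A^{B^j}$: starting from $A^{B}=AC$ and the commutation rule $CB^{n}=B^{n\alpha}C$ (which follows from $\,^CB=B^\alpha$), induction on $j$ yields
\[
A^{B^j}=A\,B^{b_j}\,C^{j},\qquad b_j=(j-1)\alpha^{j}-\tfrac{\alpha^{j}-\alpha}{\alpha-1}.
\]
Reducing to the normal form of Theorem~\ref{main1} via the identification $B^{p^{2m}}=A^{-p^{2m}}$ in Case~1 (and its analogs in Cases~2 and~3) isolates the $B$- and $C$-components of $A^{B^j}$. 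The $C$-component vanishes iff $\mathrm{ord}(C)$ divides $j$, which by Proposition~\ref{orden}(c) is equivalent to $B^{j}\in\langle A\rangle\cap\langle B\rangle\subseteq\langle A,C\rangle$. A $p$-adic computation with Lemmas~\ref{valdif} and~\ref{valdif2} then confirms that, under this divisibility on $j$, the integer $b_j$ automatically lies in the residue class forcing the $B$-component to vanish as well. Hence $A^{B^j}\in\langle A\rangle$ forces $B^{j}\in\langle A,C\rangle$, and so $g\in\langle A,C\rangle$.

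For $C_J(A)$, note $C_J(A)\subseteq N_J(A)=\langle A,C\rangle$, so I write each $g\in\langle A,C\rangle$ as $A^{i}C^{k}$ in the ranges of Proposition~\ref{orden}(b). Then $A^{g}=A^{\alpha^{k}}$, which equals $A$ iff $\alpha^{k}\equiv 1$ modulo the order of $A$. A second appeal to Lemmas~\ref{valdif} and~\ref{valdif2} forces $k=0$ within the allowed range, so $g=A^{i}\in\langle A\rangle$.

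The main obstacle is the $p$-adic verification that the $B$-component of $A^{B^j}$ vanishes whenever the $C$-component does. Case~2 is the most delicate, since there $\langle A,C\rangle$ is not a semidirect product and the order of $C$ is twice the $C$-range of the normal form; the extra factor of~$2$ must be absorbed through the relation $C^{2^{2m-1}}=A^{2^{3m-2}}$ from Proposition~\ref{orden}(a) when bringing $A^{B^j}$ into normal form.
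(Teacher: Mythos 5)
Your argument is correct and is essentially the paper's own proof in mirror image: the paper takes $x\in N_J(B)$, writes $x=B^jA^iC^k$ via Lemma~\ref{dec}, uses the conjugation formula (\ref{conmutador}) together with the uniqueness of the normal form (Theorem~\ref{main1}) and the intersections of Proposition~\ref{orden} to force the divisibility on the exponent, and then settles the centralizer with Lemmas~\ref{valdif} and~\ref{valdif2}, transporting everything to the other generator by the automorphism $A\leftrightarrow B$. Your version runs the same chain of ingredients starting from $N_J(A)$ and the decomposition $\langle A,C\rangle\langle B\rangle$, including the correct handling of Case~2 via $C^{2^{2m-1}}=A^{2^{3m-2}}$, so there is nothing substantively different to flag.
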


\begin{proof} Let $x\in N_J(B)$. By Lemma \ref{dec}, we have $x=B^j A^i C^k$ for some $i,j,k\in\N$ and $i\geq 2$.
Then
$$
B^s=B^x=B^{B^j A^i C^k}=B^{A^i C^k}= (B A^{(\alpha-1)(\alpha+2\alpha^2+\cdots+(i-1)\alpha^{i-1})}C^{-i})^{C^k},
$$
for some $s\in\N$, so
$$
B^s=B^{\beta^k} A^{(\alpha-1)(\alpha+2\alpha^2+\cdots+(i-1)\alpha^{i-1})\alpha^k}C^{-i},
$$
where $\beta$ is the inverse of $\alpha$ modulo the order of $B$. Therefore
$$
B^{s-\beta^k}=A^{(\alpha-1)(\alpha+2\alpha^2+\cdots+(i-1)\alpha^{i-1})\alpha^k}C^{-i}
$$
is in $\langle A,C\rangle\cap \langle B\rangle$, which equals $\langle A\rangle\cap \langle B\rangle$ by Proposition
\ref{orden}. Thus $C^{-i}$ is in $\langle A\rangle\cap \langle C\rangle$.

Suppose first we are in Case 1. Then $\langle A\rangle\cap \langle C\rangle$ is trivial and
hence $i\equiv 0\mod p^{2m}$ by Proposition~\ref{orden}. As $A^{p^{2m}}B^{p^{2m}}=1$, we infer
$x=B^\ell C^k$, $\ell\in\N$, which proves that $N_J(B)=\langle B,C\rangle$.

Suppose next we are in Case 2. Then $\langle A\rangle\cap \langle C\rangle=\langle C^{2^{2m-1}}\rangle$ and
hence $i\equiv 0\mod 2^{2m-1}$ by Proposition \ref{orden}. As $A^{2^{2m-1}}B^{2^{2m-1}}=1$, we deduce
$x=B^\ell C^k$, $\ell\in\N$, so $N_J(B)=\langle B,C\rangle$.

Suppose finally we are in Case 3. Then $\langle A\rangle\cap \langle C\rangle$ is trivial and
hence $i\equiv 0\mod 27$ by Proposition \ref{orden}. As $A^{27}B^{27}=1$, it follows that
$x=B^\ell C^k$, $\ell\in\N$, and $N_J(B)=\langle B,C\rangle$.

The automorphism $A\leftrightarrow B$ now yields  $N_J(A)=\langle A,C\rangle$.

Assume now that $x\in C_J(B)$. By above, $x=B^\ell C^k$, where $k,\ell\in\N$, so $C^k\in C_J(B)$.

Suppose first we are in Case 1. Then $\alpha^k\equiv 1\mod p^{3m}$. It follows from Lemma \ref{valdif} 
that the order of $\alpha$ modulo $p^{3m}$ is $p^{2m}$, so $k\equiv 0\mod p^{2m}$, whence
$x\in \langle B\rangle$.

Suppose next we are in Case 2. Then $\alpha^k\equiv 1\mod 2^{3m-1}$. By Lemma \ref{valdif2},
the order of $\alpha$ modulo $2^{3m-1}$ is $2^{2m-1}$, so $k\equiv 0\mod 2^{2m-1}$. As
$C^{2^{2m-1}}=B^{2^{3m-2}}$, we infer $x\in \langle B\rangle$.

Suppose finally we are in Case 3. Then $\alpha^k\equiv 1\mod 81$. We deduce from Lemma \ref{valdif} 
that the order of $\alpha$ modulo $81$ is $27$, and therefore $k\equiv 0\mod 27$, so
$x\in \langle B\rangle$.

Thus $C_J(B)=\langle B\rangle$ in all cases.
The automorphism $A\leftrightarrow B$ now yields  $C_J(A)=\langle A\rangle$.
\end{proof}

\section{Upper and lower central series of $J$}\label{secv7}

The next two results describe the upper and lower central series of $J$ in all cases.

\begin{theorem}\label{main2} (a) In Case 1, the nilpotency class of $J$ is 5, and
$$
Z_1(J)=\langle A^{p^{2m}}\rangle, Z_2(J)=\langle A^{p^{2m}},C^{p^{m}}\rangle,
Z_3(J)=\langle  A^{p^{m}}, B^{p^{m}}, C^{p^{m}}\rangle, 
Z_4(J)=\langle  A^{p^{m}}, B^{p^{m}}, C\rangle.
$$

(b) In Case 2, if $m>1$, the nilpotency class of $J$ is 5, and
$$
Z_1(J)=\langle A^{2^{2m-1}}\rangle, Z_2(J)=\langle  A^{2^{2m-1}},  C^{2^{m-1}}\rangle,
Z_3(J)=\langle A^{2^{m}},  B^{2^{m}}, C^{2^{m-1}}\rangle, 
Z_4(J)=\langle  A^{2^{m-1}},  B^{2^{m-1}}, C\rangle.
$$

(c) In Case 3, the nilpotency class of $J$ is 7, and
$$
Z_1(J)=\langle A^{27}\rangle,\; Z_2(J)=\langle A^{27},C^{9}\rangle,\; Z_3(J)=\langle  A^{9}, B^{9}, C^{9}\rangle,
$$
$$
Z_4(J)=\langle  A^{9}, B^{9}, C^{3}\rangle,\;
Z_5(J)=\langle  A^{3}, B^{3}, C^3\rangle,\; Z_6(J)=\langle A^{3}, B^{3},  C\rangle.
$$

Moreover, in Cases 1 and 3, the terms of the upper and lower central series of $J$ coincide, in reverse order,
while in Case 2, if $m>1$, we have
$$
\gamma_2(J)=\langle  A^{2^{m}},  B^{2^{m}}, C\rangle, \gamma_3(J)=\langle  A^{2^{m}},  B^{2^{m}}, C^{2^{m}}\rangle, 
\gamma_4(J)=\langle  A^{2^{2m}},  [A,B^{2^{m}}]\rangle,\gamma_5(J)=\langle  A^{2^{2m}}\rangle.
$$
\end{theorem}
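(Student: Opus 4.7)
The strategy is to exploit the unique normal form $A^iB^jC^k$ from Theorem \ref{main1}, together with the commutator identities $[A,C] = A^{\alpha-1}$, $[B,C] = B^{\alpha^{-1}-1}$ (which generates the same cyclic subgroup as $B^{\alpha-1}$ because $\gcd(p,\alpha) = 1$), and $A^B = AC$, from which one derives by induction closed expressions for $A^{B^j}$ and $B^{A^i}$ in normal form. Combined with the automorphism $A\leftrightarrow B$, $C\leftrightarrow C^{-1}$, these reduce any commutator $[A^rB^sC^t, A^{r'}B^{s'}C^{t'}]$ to an explicit word whose exponents are polynomial in $\alpha$. Throughout the argument, Lemmas \ref{valdif} and \ref{valdif2} control the $p$-adic valuations of the sums $(\alpha^n - 1)/(\alpha-1)$ that appear in these formulas.

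For the upper central series, the base case $Z_1(J) = C_J(A) \cap C_J(B) = \langle A\rangle \cap \langle B\rangle$ is immediate from Propositions \ref{centra} and \ref{orden}(c), giving the claimed generator in each of the three cases. I then proceed inductively: an element $A^rB^sC^t$ lies in $Z_{i+1}(J)$ if and only if both $[A^rB^sC^t, A]$ and $[A^rB^sC^t, B]$ lie in $Z_i(J)$. Using the commutator formulas above, each of these membership conditions becomes a divisibility statement on $r$, $s$, $t$ of the form $p^a \mid s(\alpha-1)$, $p^b \mid r(\alpha-1)$, $p^c \mid t\cdot(\alpha^{\ell}-1)/(\alpha-1)$, and the valuation lemmas convert these into sharp conditions on the exponents. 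Matching the solutions against the claimed generators of $Z_{i+1}(J)$ completes the induction; it terminates when the next quotient becomes trivial, which yields the nilpotency class.

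For the lower central series, $\gamma_2(J) = [J,J]$ is generated by $[A,B] = C$, $[A,C] = A^{\alpha-1}$, and $[B,C]$, and once $v_p(\alpha-1) = m$ is used, this gives the stated generators in each case. Iterating, $\gamma_{i+1}(J) = [J, \gamma_i(J)]$ is generated by commutators of $A$, $B$, $C$ with the generators of $\gamma_i(J)$; each such commutator predictably raises the $p$-adic valuation of the relevant exponent, so the descent is controlled. In Cases 1 and 3 the behaviour on $A$ and $B$ is symmetric enough that $\gamma_{c-i+1}(J)$ coincides term by term with $Z_i(J)$, while in Case 2 the collision $A^{2^{3m-2}} = C^{2^{2m-1}}$ from Proposition \ref{orden}(b) causes the lower series to contract faster, producing the asymmetric generator $[A, B^{2^m}]$ in $\gamma_4(J)$ and the strictly deeper terminal $\gamma_5(J) = \langle A^{2^{2m}}\rangle \subsetneq Z_1(J) = \langle A^{2^{2m-1}}\rangle$.

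The principal obstacle is the intricate valuation bookkeeping in Case 2, where the extra term $k^2 2^{2a+b-1}$ of Lemma \ref{valdif2} shifts every 2-adic count by one unit relative to the odd-prime analysis, and where the non-trivial intersection $\langle A\rangle \cap \langle C\rangle = \langle A^{2^{3m-2}}\rangle$ forces the non-symmetric description of $\gamma_4$ and $\gamma_5$. Case 3 poses a secondary but related difficulty in that the exponent $81$ is fixed rather than a growing $p^{3m}$, so every inclusion must be checked numerically instead of via a uniform formula. To confirm equality (and not merely inclusion) between the computed and the claimed subgroups, I rely at each step on the normal-form uniqueness of Theorem \ref{main1} to compare the orders of the relevant quotients.
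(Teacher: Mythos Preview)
Your proposal is correct and follows essentially the same strategy as the paper: compute $Z_1(J)$ via Propositions~\ref{centra} and~\ref{orden}, then inductively determine each $Z_{i+1}(J)$ by the centralizer/commutator criterion in the quotients $J/Z_i(J)$ using the normal form and the valuation lemmas, and handle the lower central series via the explicit commutator formula $[A,B^i]=B^{(\alpha-1)(\alpha+2\alpha^2+\cdots+(i-1)\alpha^{i-1})}C^i$ together with normality checks. The only notable difference is that for the last one or two steps of the upper series the paper shortcuts your uniform computation by exhibiting an explicit epimorphism onto a Heisenberg group over $\Z/p^m\Z$ (or a small quotient thereof) whose kernel is the already-identified $Z_3(J)$ (resp.\ $Z_5(J)$ in Case~3), and then reads off the remaining terms and the nilpotency class from the known structure of that quotient.
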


\begin{proof} We have $Z(J)=C_J(A)\cap C_J(B)=\langle A\rangle\cap \langle B\rangle$
by Proposition \ref{centra}. Here $\langle A\rangle\cap \langle B\rangle$ is equal to
$\langle A^{p^{2m}}\rangle$ in Case 1, to $\langle A^{2^{2m-1}}\rangle$ in Case 2,
and to $\langle A^{27}\rangle$ in Case 3, by Proposition \ref{orden}.

Set $H=J/Z(J)$. Let $a,b,c\in H$ be the images of $A,B,C$ under the canonical projection. 
Then $a^{p^{2m}}=b^{p^{2m}}=c^{p^{2m}}=1$ in Case 1, $a^{2^{2m-1}}=b^{2^{2m-1}}=c^{2^{2m-1}}=1$ in Case 2,
and $a^{27}=b^{27}=c^{27}=1$ in Case 3. Moreover, by Proposition \ref{orden}, 
the order of $H$ is $p^{6m}$ in Case 1, $2^{6m-3}$ in Case 2, and~$3^9$
in Case~3. Furthermore, by Lemma \ref{dec}, 
every element of $H$ can be written as a product of elements from 
$\langle a\rangle, \langle b\rangle$, and $\langle c\rangle$, in any fixed order. This implies uniqueness of expression
and that the given upper bounds for the orders of $a,b,c$ are the actual orders of these elements.

We claim that $C_H(b)$ is equal to $\langle b,c^{p^{m}}\rangle$ in Case 1, $\langle b,c^{2^{m-1}}\rangle$
in Case 2, and $\langle b,c^{9}\rangle$ in Case 3. Indeed, 
let $x\in C_H(b)$. Then $x=b^j a^i c^k$ for some $i,j,k\in\N$ with $i\geq 2$. The same argument used in the 
proof of Proposition \ref{centra} now yields $i\equiv 0\mod p^{2m}$ in Case 1, $i\equiv 0\mod 2^{2m-1}$ in Case 2,
and $i\equiv 0\mod 27$ in Case 3, so $x=b^j c^k$, which now implies
$\alpha^k\equiv 1\mod p^{2m}$ in Case 1, $\alpha^k\equiv 1\mod 2^{2m-1}$ in Case 2, and
$\alpha^k\equiv 1\mod 27$ in Case 3. In Case 1 the order of $\alpha$ modulo $p^{2m}$ is $p^{m}$,
in Case 2 the order of $\alpha$ modulo $2^{2m-1}$ is $2^{m-1}$, and in Case 3 the order of $\alpha$ modulo $27$ is $9$,
by Lemmas \ref{valdif} and \ref{valdif2}. So 
$k\equiv 0\mod p^{m}$ in Case 1, $k\equiv 0\mod 2^{m-1}$ in Case 2, and $k\equiv 0\mod 9$ in Case 3, 
and therefore $x$ is in $\langle b,c^{p^{m}}\rangle$ in Case 1, $\langle b,c^{2^{m-1}}\rangle$
in Case 2, and $\langle b,c^{9}\rangle$ in Case 3. This proves one inclusion in every case.
By above, $\alpha^{p^{m}}\equiv 1\mod p^{2m}$ in Case 1, $\alpha^{2^{m-1}}\equiv 1\mod 2^{2m-1}$
in Case 2, and $\alpha^{9}\equiv 1\mod 27$, so the reverse inclusion is clear, which proves the claim.
The automorphism $a\leftrightarrow b$ of $H$ now yields that  $C_H(a)$ is equal to 
$\langle a,c^{p^{m}}\rangle$ in Case~1, $\langle a,c^{2^{m-1}}\rangle$
in Case 2, and $\langle a,c^{9}\rangle$ in Case 3. Thus
$Z(H)$ is equal to $\langle a,c^{p^{m}}\rangle\cap\langle b,c^{p^{m}}\rangle$ in Case 1,
$\langle a,c^{2^{m-1}}\rangle\cap\langle b,c^{2^{m-1}}\rangle$ in Case 2, and 
$\langle a,c^{9}\rangle\cap\langle b,c^{9}\rangle$ in Case 3. The normal form of the elements of $H$
forces this intersection to be $\langle c^{p^{m}}\rangle$ in Case 1, $\langle c^{2^{m-1}}\rangle$ in Case 2,
and $\langle c^{9}\rangle$ in Case~3. The preimage of this group under the canonical
projection $J\to H$ is $\langle A^{p^{2m}},C^{p^{m}}\rangle$ in Case~1, $\langle A^{2^{2m-1}},C^{2^{m-1}}\rangle$ 
in Case 2, and $\langle A^{27},C^{9}\rangle$ in Case 3, which confirms the stated description of $Z_2(J)$ in all cases.

Next set $K=J/Z_2(J)$. Let $u,v,w\in K$ be the images of $A,B,C$ under the canonical projection.
Then $u^{p^{2m}}=v^{p^{2m}}=w^{p^{m}}=1$ in Case 1, $u^{2^{2m-1}}=v^{2^{2m-1}}=w^{2^{m-1}}=1$ in Case 2,
and $u^{27}=v^{27}=w^{9}=1$ in Case 3. Moreover, by Proposition \ref{orden}, 
the order of $K$ is $p^{5m}$ in Case 1, $2^{5m-3}$ in Case~2, and~$3^8$
in Case~3. Furthermore, by Lemma \ref{dec}, 
every element of $K$ can be written as a product of elements from 
$\langle u\rangle, \langle v\rangle$, and $\langle w\rangle$, in any fixed order. This implies uniqueness of expression
and that the given upper bounds for the orders of $u,v,w$ are the actual orders of these elements.

We claim that $C_K(v)$ is equal to $\langle v,u^{p^{m}}\rangle$ in Case 1, $\langle v,u^{2^{m}}\rangle$
in Case 2, and $\langle v,u^{9}\rangle$ in Case 3. Indeed, 
let $x\in C_K(v)$. Then $x=v^j u^i w^k$ for some $i,j,k\in\N$ with $i\geq 2$. The same argument used in the 
proof of Proposition \ref{centra} now yields $i\equiv 0\mod p^{m}$ in Case 1, $i\equiv 0\mod 2^{m-1}$ in Case 2,
and $i\equiv 0\mod 9$ in Case 3. Moreover, in Case 2, we also obtain
$$
(i-1)i/2\equiv 1+2+\cdots+i-1\equiv \alpha+2\alpha^2+\cdots+(i-1)\alpha^{i-1}\equiv 0\mod 2^{m-1}.
$$
We use for the first time that $m>1$. As $i\equiv 0\mod 2^{m-1}$, then $i$ is even. Since
$(i-1)i/2\equiv 0\mod 2^{m-1}$ and $i-1$ is odd, we infer $i\equiv 0\mod 2^{m}$. On the other hand, in Case 1, we have
$$
(u^{p^m})^v=(u^v)^{p^m}=(uw)^{p^m}=w^{p^m} u^{\alpha(1+\alpha+\cdots+\alpha^{p^m-1})}=u^{p^m}
$$
since $\alpha(\alpha^{p^m}-1)/(\alpha-1)\equiv p^m\mod p^{2m}$ by Lemma \ref{valdif}; in Case 2, we have
$$
(u^{2^m})^v=(u^v)^{2^m}=(uw)^{2^m}=w^{2^m} u^{\alpha(1+\alpha+\cdots+\alpha^{2^m-1})}=u^{2^m},
$$
since $\alpha(\alpha^{2^m}-1)/(\alpha-1)\equiv 2^m\mod 2^{2m-1}$ by Lemma \ref{valdif2}; and 
in Case~3, we have
$$
(u^{9})^v=(u^v)^{9}=(uw)^{9}=w^{9} u^{\alpha(1+\alpha+\cdots+\alpha^{8})}=u^{9}
$$
since $\alpha(\alpha^{9}-1)/(\alpha-1)\equiv 9\mod 27$ by Lemma \ref{valdif}. Thus $C_K(v)$ contains
$u^{p^m}$ in Case 1, $u^{2^m}$ in Case~2, and $u^{9}$ in Case 3. It follows that $w^k\in C_K(v)$
in all cases, which implies $\alpha^k\equiv 1\mod p^{2m}$ in Case~1, $\alpha^k\equiv 1\mod 2^{2m-1}$ in Case 2,
and $\alpha^k\equiv 1\mod 27$ in Case 3. Exactly as above, this implies 
$k\equiv 0\mod p^{m}$ in Case 1, $k\equiv 0\mod 2^{m-1}$ in Case 2, and $k\equiv 0\mod 9$ in Case 3.
This proves the claim. The automorphism $u\leftrightarrow v$ of $K$ now yields that $C_K(u)$ is equal to 
$\langle u,v^{p^{m}}\rangle$ in Case 1, $\langle u,v^{2^{m}}\rangle$
in Case 2, and $\langle u,v^{9}\rangle$ in Case 3. Thus
$Z(H)$ is equal to $\langle u,v^{p^{m}}\rangle\cap\langle v,u^{p^{m}}\rangle$ in Case 1,
$\langle u,v^{2^{m}}\rangle\cap\langle v,u^{2^{m-1}}\rangle$ in Case 2, and 
$\langle u,v^{9}\rangle\cap\langle v,u^{9}\rangle$ in Case 3. The normal form of the elements of $K$
forces this intersection to be $\langle u^{p^{m}},v^{p^{m}}\rangle$ in Case 1, $\langle u^{2^{m}},v^{2^{m}}\rangle$ in Case 2,
and $\langle u^{9},v^{9}\rangle$ in Case 3. The preimage of this group under the canonical
projection $J\to K$ is $\langle A^{p^{m}},B^{p^{m}},C^{p^{m}}\rangle$ in Case 1, $\langle A^{2^{2m}},B^{2^{2m}},C^{2^{m-1}}\rangle$ 
in Case 2, and $\langle A^{9},B^{9},C^{9}\rangle$ in Case 3, which confirms the stated description of $Z_3(J)$ in all cases.

Suppose next we are in Case 1, and let 
$$M=\langle x,y,z\,|\, x^{p^{m}}=1, y^{p^{m}}=1, z^{p^{m}}=1, [x,y]=z, [x,z]=1, [y,z]=1\rangle\cong\mathrm{Heis}(\Z/p^m\Z).$$ 
Consider the assignment $A\mapsto x$, $B\mapsto y$. The defining relations
of $J$ are preserved, which yields a group epimorphism $f:J\to M$. As $M$ has exponent $p^{m}$, we have 
$J^{p^{m}}\subseteq \ker(f)$. Recalling that $Z_3(J)=\langle A^{p^{m}},B^{p^{m}},C^{p^{m}}\rangle$, it follows that
$Z_3(J)\subseteq J^{p^{m}}$. On the other hand, if $A^i B^j C^k\in\ker(f)$, then the normal form of the elements of $M$ implies that all of $i,j,k$ are
multiples of $p^{m}$, so $\ker(f)\subseteq Z_3(J)$. This proves that $Z_3(J)=J^{p^{m}}=\ker(f)$, 
whence $J/Z_3(J)\cong M$. As $Z(M)=\langle z\rangle$,
it follows that $Z_4(J)=\langle A^{p^{m}},B^{p^{m}},C\rangle$. Given that $C\in Z_4(J)$, we deduce $Z_5(J)=J$.

Suppose next we are in Case 2, and let 
$$M=\langle x,y,z\,|\, x^{2^m}=1, y^{2^m}=1, z^{2^{m-1}}=1, [x,y]=z, [x,z]=1, [y,z]=1\rangle,$$
a quotient of the
Heisenberg group over $\Z/2^m\Z$ by the $2^{m-1}$th power of its center.
Consider the assignment $A\mapsto x$, $B\mapsto y$. The defining relations
of $J$ are preserved, which yields a group epimorphism $f:J\to M$.  Recalling that
$Z_3(J)=\langle A^{2^{m}},B^{2^{m}},C^{2^{m-1}}\rangle$, we deduce
$Z_3(J)\subseteq \ker(f)$. Moreover, if $A^i B^j C^k\in\ker(f)$, then the normal form of the elements of 
$M$ implies that $i,j$ are multiples of $2^m$ and $k$ is a multiple of $2^{m-1}$, which proves $Z_3(J)=\ker(f)$, 
whence $J/Z_3(J)\cong M$. As the nilpotency class of $M$ is 2, that of $J$ is 5. In fact,
as $Z(M)=\langle x^{2^{m-1}}, y^{2^{m-1}}, z\rangle$,
it follows that $Z_4(J)=\langle A^{2^{m-1}},B^{2^{m-1}},C\rangle$. Given that $C\in Z_4(J)$, we deduce $Z_5(J)=J$.

Suppose next we are in Case 3, and set $L=J/Z_3(J)$. As $|Z(K)|=9$, we have $|L|=3^{6}$. Repeating with $L$ the analysis made with $H$,
we find $|Z(L)|=3$ and $Z_4(J)=\langle A^{9},B^9,C^{3}\rangle$. Next set $T=J/Z_4(J)$. Since $|Z(L)|=3$,
we have $|T|=3^{5}$. Mimicking  with $T$ the argument used with~$K$,
we find $|Z(T)|=9$, $Z_5(J)=\langle A^{3},B^3,C^{3}\rangle$, and $|J/Z_5(J)|=3^3$.
Let 
$$M=\langle r,s,t\,|\, r^{3}=1, s^{3}=1, t^{3}=1, [r,s]=t, [r,t]=1, [s,t]=1\rangle\cong \mathrm{Heis}(\Z/3\Z).$$
Then $A\mapsto r$, $B\mapsto s$  extends to an epimorphism
$f:J\to M$.  As $M$ has exponent $3$, we have 
$J^{3}\subseteq \ker(f)$. But $Z_5(J)=\langle A^{3},B^{3},C^{3}\rangle$, so $Z_5(J)\subseteq J^{3}$. 
As $|J/Z_5(J)|=3^3$, we infer $Z_5(J)=J^{3}=\ker(f)$ and $J/Z_5(J)\cong M$.
Since $Z(M)=\langle t\rangle$,
it follows that $Z_6(J)=\langle A^{3},B^{3},C\rangle$. Given that $C\in Z_6(J)$, we deduce $Z_7(J)=J$.

It remains to show that the lower central series of $J$ is as stated. We will make use of the formulas:
\begin{equation}
\label{conmutador}
[A,B^{i}]=B^{(\alpha-1)(\alpha+2\alpha^2+\cdots+(i-1)\alpha^{i-1})}C^{i},\;
[B,A^{i}]=A^{(\alpha-1)(\alpha+2\alpha^2+\cdots+(i-1)\alpha^{i-1})}C^{-i},\;
i\geq 2.
\end{equation}
Suppose first we are in Case 1 (resp. Case 3). 
Since $J=Z_5(J)$ (resp. $J=Z_7(J)$), we infer $\gamma_2(J)\subseteq Z_4(J)$ (resp. $\gamma_2(J)\subseteq Z_6(J)$).
But $[A,B]=C$, $[A,C]=A^{\alpha-1}$, and $[B,C^{-1}]=B^{\alpha-1}$, so $A^{p^m},B^{p^m},C\in \gamma_2(J)$. 
Thus $Z_4(J)=\gamma_2(J)$ (resp. $Z_6(J)=\gamma_2(J)$). It follows that 
$\gamma_3(J)\subseteq Z_3(J)$ (resp. $\gamma_3(J)\subseteq Z_5(J)$). As above, $A^{p^m},B^{p^m}\in \gamma_3(J)$.
Due to (\ref{conmutador}), we see that $C^{p^m}\in \gamma_3(J)$, so $\gamma_3(J)=Z_3(J)$ (resp. $\gamma_3(J)=Z_5(J)$). This implies 
$\gamma_4(J)\subseteq Z_2(J)$ (resp. $\gamma_4(J)\subseteq Z_4(J)$). Making use of Lemma \ref{valdif}, we find
that $v_p(\alpha^{p^m}-1)=2m$, which gives $A^{p^{2m}},B^{p^{2m}}\in \gamma_4(J)$. As
$$
\alpha+2\alpha^2+\cdots+(p^{m}-1)\alpha^{p^{m}-1}\equiv 1+2+\cdots+(p^{m}-1)\equiv 0\mod p^m,
$$
the case $i=p^{m}$ of (\ref{conmutador}) ensures that $C^{p^{m}}\in \gamma_4(J)$, 
so $\gamma_4(J)=Z_2(J)$ (resp. $\gamma_4(J)=Z_4(J)$).  We infer 
$\gamma_5(J)\subseteq Z(J)$ (resp. $\gamma_5(J)\subseteq Z_3(J)$). As above, $A^{p^{2m}},B^{p^{2m}}\in \gamma_5(J)$,
and the case $i=p^{2m}$ of (\ref{conmutador}) ensures that $C^{p^{2m}}\in \gamma_5(J)$. Thus 
$\gamma_5(J)=Z(J)$ (resp. $\gamma_5(J)=Z_3(J)$). This completes the proof in Case 1.

Suppose next we are in Case 3. From $\gamma_5(J)=Z_3(J)$, we infer $\gamma_6(J)\subseteq Z_2(J)$.
As $v_3(\alpha^9-1)=27$, we deduce $A^{27}\in \gamma_6(J)$.
Since $\alpha\equiv -2\mod 9$, we see that $\alpha+2\alpha^2+\cdots+8\alpha^{8}\equiv 0\mod 9$. Thus (\ref{conmutador})
yields $C^{9}\in  \gamma_6(J)$, which implies
$\gamma_6(J)=Z_2(J)$. Thus $\gamma_7(J)\subseteq Z(J)$, with $A^{27}\in \gamma_7(J)$ as above,
so $\gamma_7(J)=Z(J)$. This completes the proof in Case 3.

Suppose finally we are in Case 2. For any $i\geq 1$, $\gamma_{i+1}(J)$ is the normal subgroup generated by the set of all possible brackets between $\{A,B\}$ and any fixed generating $S_i$ subset of $\gamma_{i}(J)$ \cite[5.1.7]{Ro}. Taking $S_1=\{A,B\}$,
we find that $\gamma_2(J)=\langle A^{2^{m}}, B^{2^{m}}, C\rangle$, which is the normal closure of~$\{C\}$. Alternatively,
since $\langle A^{2^{m}}, B^{2^{m}}, C\rangle$
is a normal subgroup of $J$ contained in $[J,J]$ with abelian quotient, it must be equal to it. In both cases, normality
follows from (\ref{conmutador}). Taking $S_2=\{A^{2^m}, B^{2^{m}}, C\}$, we find that 
$\gamma_3(J)=\langle A^{2^{m}}, B^{2^{m}}, C^{2^m}\rangle$, where normality 
follows from (\ref{conmutador}) and Lemma \ref{valdif2}.
Taking $S_3=\{A^{2^m}, B^{2^{m}}, C^{2^m}\}$, we find that $\gamma_4(J)=\langle A^{2^{2m}},[A,B^{2^{m}}]\rangle$, 
where $[A,B^{2^{m}}]=B^{\ell 2^{2m-1}}C^{2^m}$, with $\ell$ odd,  by (\ref{conmutador}). As $A^{2^{2m}}, B^{2^{2m-1}}\in Z(J)$
and $\langle A^{2^{2m}}\rangle=\langle B^{2^{2m}}\rangle$, the normality of $\langle A^{2^{2m}},[A,B^{2^{m}}]\rangle$ is ensured,
as well as the fact that $\gamma_5(J)=\langle A^{2^{2m}}\rangle$.
\end{proof}

 \begin{prop}\label{mau} Suppose $m=1$ and $p=2$. Then $J$ is isomorphic to the generalized quaternion group of order~16,
$Z(J)=\langle A^2\rangle=\gamma_3(J)$, $Z_2(J)=\langle A^2,C\rangle=\gamma_2(J)$, the nilpotency class of $J$ is 3,
and the exponent of $J$ is 8.
\end{prop}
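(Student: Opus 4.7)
The starting point is that $m=1$, $p=2$ forces $\alpha\equiv -1\pmod 4$. Specializing Propositions~\ref{masrex}, \ref{orden}, and~\ref{centra} to this case yields $|J|=16$ with normal form $A^iB^jC^k$ ($0\le i<4$, $0\le j<2$, $0\le k<2$), all three of $A,B,C$ of order $4$, the central coincidence $A^2=B^2=C^2$ (the equality $B^2=C^2$ is obtained from the symmetry $A\leftrightarrow B$, $C\leftrightarrow C^{-1}$), $Z(J)=\langle A\rangle\cap\langle B\rangle=\langle A^2\rangle$, and the conjugation rules $A^C=A^{-1}$ and $CBC^{-1}=B^{-1}$ (the latter from $B^{C^{-1}}=B^\alpha=B^{-1}$).

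The core step is to identify $J$ with $Q_{16}=\langle s,t\mid s^8=1,\ t^2=s^4,\ tst^{-1}=s^{-1}\rangle$ via $s\mapsto AB$, $t\mapsto B$. Using $BA=ABC^{-1}$ (from $[A,B]=C$) and $C^{-1}B=B^{-1}C^{-1}$ (a consequence of $CBC^{-1}=B^{-1}$ together with the centrality of $C^2$), one computes $(AB)^2=A^2BC^{-1}B=A^2C^{-1}$, whence $(AB)^4=A^4C^{-2}=A^2$ (using $C^2=A^2$) and $(AB)^8=1$. So $s^8=1$ and $t^2=B^2=A^2=s^4$ are immediate. For $tst^{-1}=s^{-1}$, $B(AB)B^{-1}=BA$, and the identity $(AB)(BA)=AB^2A=A\cdot A^2\cdot A=A^4=1$ gives $BA=(AB)^{-1}$. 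This yields a homomorphism $Q_{16}\to J$, which is surjective since its image contains $B$ and $AB$, hence $A$; order comparison promotes it to an isomorphism.

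The remaining assertions then fall out. The exponent of $J$ is $8$: at least $8$ by the order of $AB$, and not $16$ because $J$ is non-abelian ($[A,B]=C\ne 1$). From $[A,C]=A^{-1}\cdot A^C=A^{-2}=A^2$ and, similarly, $[B,C]=A^2$, the subgroup $\langle C\rangle=\langle A^2,C\rangle$ is normal in $J$ with abelian quotient and contains $C=[A,B]$; hence $\gamma_2(J)=\langle A^2,C\rangle$, and then $\gamma_3(J)=[J,\langle C\rangle]=\langle A^2\rangle=Z(J)$, forcing $\gamma_4(J)=1$ and nilpotency class $3$. The same commutator formulas place $\langle C\rangle\subseteq Z_2(J)$, and the reverse inclusion follows because $J/Z(J)$ is non-abelian of order $8$, so $|Z_2(J)/Z(J)|=2$. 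The main subtlety throughout is keeping track of the collapse $A^2=B^2=C^2$ to a single involution; this collapse is precisely what distinguishes the present case from the generic $m>1$ behaviour in Case~2 of Theorem~\ref{main2}(b), where the class is $5$.
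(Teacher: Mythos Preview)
Your proof is correct and follows essentially the same route as the paper's: both identify $J$ with $Q_{16}$ via the element $AB$ of order $8$, then read off the central series and exponent. The only cosmetic differences are that the paper takes $A$ rather than $B$ as the second $Q_{16}$-generator, computes $(AB)^2=C$ directly (equivalent to your $A^2C^{-1}$ since $C^2=A^2$), and cites Theorem~\ref{main1} for $|J|=16$ where you invoke Propositions~\ref{masrex} and~\ref{orden}.
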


\begin{proof} The first part of the proof of Theorem \ref{main2} shows that
$Z(J)=\langle A^2\rangle$ and $Z_2(J)=\langle A^2,C\rangle$. The latter is a subgroup of $J$ of order 4
by Proposition \ref{orden} and $|J|=16$ by Theorem~\ref{main1}, so $J/Z_2(J)$ is abelian and therefore
$Z_3(J)=J$. Thus the nilpotency class of $J$  is 3. From $Z_3(J)=J$ we deduce $\gamma_2(J)\subseteq Z_2(J)$.
But $C,A^2\in\gamma_2(J)$, so $\gamma_2(J)=Z_2(J)$. This implies $\gamma_3(J)\subseteq Z(J)$.
As $A^2\in\gamma_3(J)$, we infer $\gamma_3(J)=Z(J)$. Since $v_2(\alpha-1)=1$, 
Theorem \ref{predela} ensures that $J$ has presentation
$$
\langle A,B\,|\, A^{[A,B]}=A^{-1},\; B^{[B,A]}=B^{-1},\; A^{4}=1=B^{4}\rangle,
$$
which, in accordance with the discussion following Theorem \ref{predela}, is independent of $\alpha$, as
all integers of the form $1+2k$, with $k$ odd, are congruent modulo 4. Set $D=AB$. Since $A^2=B^2=C^2$ by Proposition \ref{masrex},
we have  $D^2=ABAB=ABBAC=C$, so $D^4=A^2$, and $D^A=BA=D^{-1}$.
Thus $J$ is an epimorphic image of the generalized quaternion group $Q_{16}=\langle u,v\,|\, u^4=v^2, u^v=u^{-1}\rangle$
of order 16. As $|J|=16$, we have $J\cong Q_{16}$. As the order of $D$ is 8, this is the exponent of $J$.
\end{proof}

\section{Exponent of $J$}\label{secv8}

\begin{prop}\label{z3} Suppose we are in Case 1. Then  $Z(J)=J^{p^{2m}}$, $Z_3(J)=J^{p^{m}}$ is abelian of order $p^{4m}$ with presentation
$$
\langle X,Y,Z\,|\,XY=YX,\; YZ=ZY,\; XZ=ZX,\; X^{p^{2m}}=1, X^{p^{m}}Y^{p^{m}}=1,\;  Z^{p^{m}}=1\rangle,
$$
$J/Z_3(J)\cong\mathrm{Heis}(\Z/p^{m}\Z)$, and $J$ has exponent $p^{3m}$.
\end{prop}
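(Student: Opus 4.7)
The plan is to exploit the substantial work already done in Theorem \ref{main2}. In Case 1 the proof of that theorem already exhibits an epimorphism $f\colon J\to M\cong\mathrm{Heis}(\Z/p^m\Z)$ whose kernel is simultaneously $J^{p^m}$ and $Z_3(J)$. This immediately delivers two pieces of the proposition: the isomorphism $J/Z_3(J)\cong\mathrm{Heis}(\Z/p^m\Z)$, and, via $|Z_3(J)|=|J|/|M|=p^{7m}/p^{3m}=p^{4m}$, the claimed order. Theorem \ref{main2}(a) also pins down a generating set $Z_3(J)=\langle A^{p^m},B^{p^m},C^{p^m}\rangle$.

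The first substantive step is to show that $Z_3(J)$ is abelian. I would avoid direct commutator grinding and instead appeal to the lower central series. Theorem \ref{main2}(a) asserts that in Case 1 the upper and lower central series coincide in reverse order, so $Z_3(J)=\gamma_3(J)$, while the nilpotency class is $5$, so $\gamma_6(J)=1$. The standard inclusion $[\gamma_i(J),\gamma_j(J)]\subseteq\gamma_{i+j}(J)$ then yields $[Z_3(J),Z_3(J)]=[\gamma_3(J),\gamma_3(J)]\subseteq\gamma_6(J)=1$, so $Z_3(J)$ is abelian.

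With abelianness in hand, the presentation is bookkeeping. Setting $X=A^{p^m}$, $Y=B^{p^m}$, $Z=C^{p^m}$, the relations $X^{p^{2m}}=1$, $X^{p^m}Y^{p^m}=1$, $Z^{p^m}=1$ translate respectively the identities $A^{p^{3m}}=1$, $A^{p^{2m}}B^{p^{2m}}=1$, and $C^{p^{2m}}=1$ (the last from Proposition \ref{orden}(a)). A quick Smith-form or determinant computation on the relation matrix $\mathrm{diag}(p^{2m},p^m,p^m)$ plus the off-diagonal $p^m$ shows the abstract abelian group so defined has order exactly $p^{4m}$, matching $|Z_3(J)|$; hence the evident surjection from the abstract group onto $Z_3(J)$ is an isomorphism.

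For $Z(J)=J^{p^{2m}}$, I would use abelianness of $H=J^{p^m}=Z_3(J)$ to identify $J^{p^{2m}}=H^{p^m}$: in one direction $(\prod x_i^{p^m})^{p^m}=\prod x_i^{p^{2m}}\in J^{p^{2m}}$, and in the other $y^{p^{2m}}=(y^{p^m})^{p^m}\in H^{p^m}$. Reading $Z_3(J)^{p^m}$ off the presentation gives $\langle A^{p^{2m}},B^{p^{2m}},C^{p^{2m}}\rangle=\langle A^{p^{2m}}\rangle$, which equals $Z(J)$ by Theorem \ref{main2}(a). Finally, for the exponent: since $p$ is odd in Case 1, $\mathrm{Heis}(\Z/p^m\Z)$ has exponent $p^m$, so $J^{p^m}\subseteq Z_3(J)$; the presentation shows $Z_3(J)$ has exponent $p^{2m}$; composing, every element of $J$ has $p^{3m}$-th power trivial, and the bound is attained by $A$. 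The main obstacle is really just careful bookkeeping around the identity $(J^{p^m})^{p^m}=J^{p^{2m}}$, which genuinely requires abelianness of $J^{p^m}$ and is easy to assert without justification.
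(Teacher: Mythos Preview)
Your argument is correct. The route diverges from the paper's in one notable place: to obtain abelianness and the presentation of $Z_3(J)$, the paper invokes the explicit model $\langle X_0,Y_0\rangle$ built in Theorem~\ref{model} (the Appendix), observing that the isomorphism $J\to\langle X_0,Y_0\rangle$ carries $\langle A^{p^m},B^{p^m},C^{p^m}\rangle$ onto the abelian base group $\langle X,Y,Z\rangle$, which is \emph{defined} by the stated presentation. You instead deduce abelianness intrinsically from $Z_3(J)=\gamma_3(J)$ together with $[\gamma_3,\gamma_3]\subseteq\gamma_6=1$, and then pin down the presentation by verifying the relations and matching the order via the relation matrix. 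Your approach is self-contained and avoids the Appendix machinery; the paper's approach is quicker once the model is already on the table, and makes the isomorphism with the abstract group explicit rather than merely existential. For $Z(J)=J^{p^{2m}}$ and the exponent, your argument and the paper's are essentially identical, though the paper compresses the step $(J^{p^m})^{p^m}=J^{p^{2m}}$ into a single clause.
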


\begin{proof} That $Z_3(J)=J^{p^{m}}$ was established in the proof of Theorem \ref{main2}. Referring to
the group $\langle X_0,Y_0\rangle$ constructed in Theorem \ref{model}, we have an
isomorphism $J\to\langle X_0,Y_0\rangle$, defined by $A\mapsto X_0$, $B\mapsto Y_0$. Then 
$Z_3(J)=\langle  A^{p^{m}}, B^{p^{m}}, C^{p^{m}}\rangle$ corresponds to 
$\langle  X, Y, Z\rangle$, which gives the desired order and presentation of $Z_3(J)$. As $Z_3(J)$
is abelian, $A^{p^{2m}}B^{p^{2m}}=1$, and $C^{p^{2m}}=1$,  we infer $J^{p^{2m}}=Z(J)$.
The fact that 
$J/Z_3(J)\cong\mathrm{Heis}(\Z/p^{m}\Z)$ was demonstrated in the proof of Theorem \ref{main2}. Thus, 
there is an epimorphism $f:J\to \mathrm{Heis}(\Z/p^{m}\Z)$, where $\ker(f)=Z_3(J)=J^{p^m}$ has exponent $p^{2m}$.
Let $g\in J$. Then $g^{p^{m}}\in\ker(f)$, so $g^{p^{3m}}=1$. Since $A$ has order $p^{3m}$ by Proposition \ref{orden}, 
the result follows.
\end{proof}  

\begin{prop}\label{z3p2} Suppose we are in Case 2 and $m>1$. Then $Z_3(J)$ is abelian of order $2^{4m-2}$, with presentation
$$
\langle x,y,z\,|\, xy=yx, xz=zx, yz=zy, x^{2^{2m-2}}=z^{2^m}, x^{2^{m-1}}y^{2^{m-1}}=1, x^{2^{2m-1}}=1\rangle,
$$
$J/Z_3(J)\cong \mathrm{Heis}(\Z/2^{m}\Z)/U$, where $U$ is the $2^{m-1}$th power of the center of $\mathrm{Heis}(\Z/2^{m}\Z)$,
and $J$ has exponent $2^{3m-1}$.
\end{prop}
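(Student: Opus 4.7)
The proof plan mirrors that of Proposition \ref{z3}. First, I would invoke Theorem \ref{model2} to obtain an explicit realization $J\cong \langle X_0,Y_0\rangle$ via $A\mapsto X_0,\;B\mapsto Y_0$, under which the subgroup $Z_3(J)=\langle A^{2^m},B^{2^m},C^{2^{m-1}}\rangle$ (identified in Theorem \ref{main2}(b)) corresponds to an explicit \emph{abelian} subgroup in the model; this gives the commuting relations for free. Taking $x=A^{2^m}$, $y=B^{2^m}$, $z=C^{2^{m-1}}$, the defining relations of the statement are verified directly from Proposition~\ref{masrex}: $A^{2^{3m-2}}=C^{2^{2m-1}}$ gives $x^{2^{2m-2}}=z^{2^m}$; $A^{2^{2m-1}}B^{2^{2m-1}}=1$ gives $x^{2^{m-1}}y^{2^{m-1}}=1$; and $A^{2^{3m-1}}=1$ gives $x^{2^{2m-1}}=1$.

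Next, I would match orders. A Smith normal form computation (or equivalently, the determinant of the $3\times 3$ relation matrix) shows that the abelian group presented by these relations has order at most $2^{4m-2}$. On the other hand, the proof of Theorem \ref{main2}(b) produced an epimorphism $f\colon J\to M:=\mathrm{Heis}(\Z/2^m\Z)/U$ with kernel $Z_3(J)$, where $U$ is the $2^{m-1}$th power of the center. Since $|\mathrm{Heis}(\Z/2^m\Z)|=2^{3m}$ and $|U|=2$, we get $|M|=2^{3m-1}$, and Theorem~\ref{main1} gives $|J|=2^{7m-3}$, so $|Z_3(J)|=2^{4m-2}$. Therefore the displayed relations form a complete presentation.

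For the exponent, I would show first that $M$ has exponent dividing $2^m$: using the Heisenberg multiplication $(x^ay^b)^n=x^{an}y^{bn}z^{ab\binom{n}{2}}$ together with $z^{2^{m-1}}=1$ in $M$, one verifies $(x^ay^bz^c)^{2^m}=1$. Hence $g^{2^m}\in Z_3(J)$ for every $g\in J$. Next, the presentation of $Z_3(J)$ yields that its exponent divides $2^{2m-1}$: clearly $x^{2^{2m-1}}=1$; since $y^{2^{m-1}}=x^{-2^{m-1}}$ we get $y^{2^{2m-1}}=1$; and $z^{2^{2m-1}}=(z^{2^m})^{2^{m-1}}=x^{2^{3m-3}}=1$ using $3m-3\geq 2m-1$, which holds because $m\geq 2$. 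Combining, $g^{2^{3m-1}}=1$ for all $g\in J$. Since $A$ has order $2^{3m-1}$ by Proposition~\ref{orden}, the exponent of $J$ equals $2^{3m-1}$.

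The main obstacle is the reliance on Theorem \ref{model2} to certify that $Z_3(J)$ is actually abelian: without the explicit model from the appendix, establishing commutativity among $A^{2^m}$, $B^{2^m}$, and $C^{2^{m-1}}$ from the presentation alone would require a delicate separate argument. The one genuinely new verification, compared with Case~1, is the appearance of the nontrivial relation $x^{2^{2m-2}}=z^{2^m}$ reflecting the collapse $A^{2^{3m-2}}=C^{2^{2m-1}}$ peculiar to Case~2; this is already packaged in Proposition~\ref{masrex}.
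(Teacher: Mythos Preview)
Your proposal is correct and follows essentially the same route as the paper: invoke the model of Theorem~\ref{model2} to see that $Z_3(J)=\langle A^{2^m},B^{2^m},C^{2^{m-1}}\rangle$ corresponds to the abelian starting block $\langle x,y,z\rangle$ of that construction, quote Theorem~\ref{main2}(b) for $J/Z_3(J)\cong M$, and then bound the exponent via $\exp(M)\mid 2^m$ and $\exp(Z_3(J))\mid 2^{2m-1}$, matched by the order of $A$. The only difference is that the paper reads off the presentation and the order $2^{4m-2}$ directly from the model (where $\langle x,y,z\rangle$ is defined with exactly those relations and that order), so your separate verification via Proposition~\ref{masrex} and the Smith normal form/order-count argument is redundant, though not incorrect.
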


\begin{proof} Referring to
the group $\langle x_0,y_0\rangle$ constructed in the proof of Theorem \ref{model2}, we have an
isomorphism $J\to\langle x_0,y_0\rangle$ such that $A\mapsto x_0$ and $B\mapsto y_0$. Then 
$Z_3(J)=\langle  A^{2^m}, B^{2^m}, C^{2^{m-1}}\rangle$ corresponds to 
$\langle  x, y, z\rangle$, which gives the desired order and presentation of $Z_3(J)$. 
The fact that 
$J/Z_3(J)\cong\mathrm{Heis}(\Z/2^m\Z)/U$ was demonstrated in the proof of Theorem \ref{main2}. Thus, 
there is an epimorphism $f:J\to \mathrm{Heis}(\Z/2^{m}\Z)/U$, where $\ker(f)=Z_3(J)$ has exponent $2^{2m-1}$.
A matrix calculation shows that $\mathrm{Heis}(\Z/2^{m}\Z)$ has exponent $2^{m+1}$ and that $\mathrm{Heis}(\Z/2^{m}\Z)/U$
has exponent~$2^m$. Let $g\in J$. Then $g^{2^{m}}\in\ker(f)$, so $g^{2^{3m-1}}=1$. Since $A$ has order $2^{3m-1}$ 
by Proposition \ref{orden}, the result follows.
\end{proof}

\begin{prop}\label{z3p3} Suppose we are in Case 3. Then

(a) We have $Z_5(J)=\langle A^3,B^3,C^3\rangle=J^3$, where $J/Z_5(J)\cong\mathrm{Heis}(\Z/3\Z)$.

(b) The group $Z_3(J)=\langle A^9,B^9,C^9\rangle=J^9$ is abelian of exponent 9, and $Z_5(J)/Z_3(J)\cong (\Z/3\Z)^3$.

(c) We have $Z(J)=\langle A^{27}\rangle=J^{27}$ and the exponent of $J$ is equal to $81$.
\end{prop}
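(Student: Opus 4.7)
My plan is as follows. Part~(a) is essentially contained in the proof of Theorem~\ref{main2} for Case~3, where an epimorphism $f\colon J\to \mathrm{Heis}(\Z/3\Z)$ is constructed with $\ker(f)=J^{3}=Z_{5}(J)$; I would just re-record this.

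For part~(b), I would first pin down the structure of $Z_{3}(J)$ and of $Z_{5}(J)/Z_{3}(J)$ by exploiting the coincidence of the upper and lower central series of $J$ together with the nilpotency class being $7$ (Theorem~\ref{main2}(c)). Since $Z_{3}(J)=\gamma_{5}(J)$ and $[\gamma_{5}(J),\gamma_{5}(J)]\subseteq\gamma_{10}(J)=1$, the group $Z_{3}(J)$ is abelian; its generators $A^{9},B^{9},C^{9}$ of orders $9,9,3$ (Proposition~\ref{orden}) force its exponent to be $9$. Similarly $[\gamma_{3}(J),\gamma_{3}(J)]\subseteq\gamma_{6}(J)=Z_{2}(J)\subseteq Z_{3}(J)$ makes $Z_{5}(J)/Z_{3}(J)$ abelian; order counting from the proof of Theorem~\ref{main2} gives $|Z_{5}(J)/Z_{3}(J)|=3^{7}/3^{4}=27$, and the cosets of $A^{3},B^{3},C^{3}$ all cube into $Z_{3}(J)$, so $Z_{5}(J)/Z_{3}(J)\cong(\Z/3\Z)^{3}$. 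For $J^{9}=Z_{3}(J)$, one inclusion uses~(a): for $x\in J$, $x^{3}\in J^{3}=Z_{5}(J)$, so $x^{9}=(x^{3})^{3}\in Z_{3}(J)$ since the quotient has exponent $3$. For the reverse, $A^{9}$ and $B^{9}$ are obvious, and I would extract $C^{9}$ from the identity $[A,B^{9}]=B^{(\alpha-1)S}C^{9}$ supplied by~(\ref{conmutador}), where $S=\alpha+2\alpha^{2}+\cdots+8\alpha^{8}$; a short congruence calculation modulo $9$ using $\alpha=1+3k$ shows $S\equiv 0\mod 9$, hence $(\alpha-1)S\equiv 0\mod 27$, so $B^{(\alpha-1)S}\in\langle B^{27}\rangle=\langle A^{27}\rangle\subseteq J^{9}$; since $J^{9}\triangleleft J$, the identity yields $C^{9}\in J^{9}$.

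For part~(c), $Z(J)=\langle A^{27}\rangle$ comes from Theorem~\ref{main2}. The exponent is $81$: for any $x\in J$, $x^{9}\in J^{9}=Z_{3}(J)$ has order dividing $9$ by~(b), so $x^{81}=1$; since $A$ has order $81$, the exponent is exactly $81$. For $J^{27}=Z(J)$, I would use that $Z_{5}(J)$ has nilpotency class $\leq 2$ (because $\gamma_{3}(Z_{5}(J))\subseteq[\gamma_{3}(J),\gamma_{6}(J)]\subseteq\gamma_{9}(J)=1$) and that $[Z_{5}(J),Z_{5}(J)]\subseteq\gamma_{6}(J)=Z_{2}(J)$ has exponent $3$. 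The class-$2$ power identity $(uv)^{9}=u^{9}v^{9}[v,u]^{\binom{9}{2}}$, together with $\binom{9}{2}=36\equiv 0\mod 3$, then shows that the $9$th-power map is a homomorphism on $Z_{5}(J)$. Evaluated on the generators $A^{3},B^{3},C^{3}$, its image equals $\langle A^{27},B^{27},C^{27}\rangle=\langle A^{27}\rangle=Z(J)$ (using $A^{27}B^{27}=1$ and $C^{27}=1$). Applied to $x^{3}\in Z_{5}(J)$ this gives $x^{27}\in Z(J)$ for every $x\in J$, so $J^{27}\subseteq Z(J)$; the reverse inclusion is immediate from $A^{27}\in J^{27}$.

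The main technical hurdle is the valuation calculation $(\alpha-1)\sum_{j=1}^{8}j\alpha^{j}\equiv 0\mod 27$, which is what lets us isolate $C^{9}$ as a single element of $J^{9}$ via $[A,B^{9}]$; once this is secured, the rest is a combination of centrality bookkeeping, the orders supplied by Proposition~\ref{orden}, and the class-$2$ power identity.
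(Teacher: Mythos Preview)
Your argument is correct, and it takes a genuinely different route from the paper. The paper proves (b) and (c) by transporting everything to the explicit model $\langle x_0,y_1\rangle$ built in Theorem~\ref{model3}: it identifies $Z_3(J)$ and $Z_5(J)$ with concrete subgroups of that model and reads off abelianness, exponent, and the structure of $Z_5(J)/Z_3(J)$ directly from the defining relations there. You instead work intrinsically, using only Theorem~\ref{main2} (coincidence of upper and lower central series, class $7$) together with the basic fact $[\gamma_i,\gamma_j]\subseteq\gamma_{i+j}$: this immediately gives $Z_3(J)=\gamma_5(J)$ abelian and $Z_5(J)=\gamma_3(J)$ of class $\leq 2$ with commutator in $\gamma_6(J)=Z_2(J)$, after which the class-$2$ power formula $(uv)^9=u^9v^9[v,u]^{36}$ and the exponent-$3$ bound on $Z_2(J)$ (a group of order $9$) do the rest. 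Your approach is more self-contained and avoids invoking the lengthy Appendix construction; the paper's approach is more concrete but leans on machinery that had to be built anyway.

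One small inefficiency: your detour through $[A,B^9]$ to get $C^9\in J^9$ is unnecessary, since $C\in J$ already gives $C^9\in J^9$ by definition of $J^9$ (the paper simply says ``clearly $\langle A^9,B^9,C^9\rangle\subseteq J^9$''). The congruence $(\alpha-1)S\equiv 0\bmod 27$ is correct, just not needed here.
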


\begin{proof} (a) This was demonstrated in the proof of Theorem \ref{main2}.

(b) We already know that $Z_3(J)=\langle A^9,B^9,C^9\rangle$ from Theorem \ref{main2}. Clearly
$\langle A^9,B^9,C^9\rangle\subseteq J^9$. Referring to the group $\langle x_0,y_1\rangle$ constructed in the proof of Theorem \ref{model3}, we have an isomorphism $J\to\langle x_0,y_1\rangle$ such that $A\mapsto x_0$ and $B\mapsto y_1$. Then 
$Z_3(J)$ corresponds to $\langle  x^3, y, z^3\rangle$ and $Z_5(J)$ corresponds to $\langle  x, y_0, z\rangle$.
The stated defining relations of $\langle  x, y, z\rangle$ show that $\langle  x, y, z^3\rangle$ is abelian
and its subgroup $\langle  x^3, y, z^3\rangle$ has exponent 9, while the stated defining relations
of $\langle  x, y_0, z\rangle$ prove that $Z_5(J)/Z_3(J)$ is abelian of exponent 3
and order 27, so $Z_5(J)/Z_3(J)\cong (\Z/3\Z)^3$. This and part (a) yield $J^9\subseteq \langle A^9,B^9,C^9\rangle$.

(c) That  $Z(J)=J^{27}$ follows from $A^{27}B^{27}=1$ and parts (a) and (b). As $A$ has order 81, this is the exponent of $J$.
\end{proof}

\section{Order, nilpotency class, and exponent of $G(\alpha)$}\label{secv9}

Recall that $\alpha$ is 3-admissible if one the following three possibilities occur:
$3\nmid(\alpha-1)$; the multiplicity of 3 as a factor of $\alpha-1$ is larger than 1;
$3|(\alpha-1)$ and $(\alpha-1)/3\equiv 1\mod 3$. 

\begin{theorem}\label{todojunto} The nilpotency class of 
the Macdonald group $G(\alpha)$ is equal to 
3 if $\alpha\in\{-1,3\}$; 5 if $\alpha\notin\{-1,3\}$ and $\alpha$ is 3-admissible; and
7 if  $3|(\alpha-1)$ and $(\alpha-1)/3\equiv -1\mod 3$. Moreover, the order and exponent of 
$G(\alpha)$ are respectively equal to $|\alpha-1|^7$ and $|\alpha-1|^3$ if $2\nmid (\alpha-1)$
and $\alpha$ is 3-admissible; $|\alpha-1|^7/8$ and $|\alpha-1|^3/2$ if $v_2(\alpha-1)>1$
and $\alpha$ is 3-admissible;  $|\alpha-1|^7/8$ and $|\alpha-1|^3$ if $v_2(\alpha-1)=1$
and $\alpha$ is 3-admissible; $27|\alpha-1|^7$ and $3|\alpha-1|^3$ if $2\nmid(\alpha-1)$
and $\alpha$ is not 3-admissible; $27|\alpha-1|^7/8$ and $3|\alpha-1|^3/2$ if $v_2(\alpha-1)>1$
and $\alpha$ is not 3-admissible; and $27|\alpha-1|^7/8$ and $3|\alpha-1|^3$ if $v_2(\alpha-1)=1$
and $\alpha$ is not 3-admissible.
\end{theorem}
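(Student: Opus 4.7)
The plan is to exploit the fact, established in Theorem \ref{nul}, that $G(\alpha)$ is a finite nilpotent group, hence the internal direct product of its Sylow subgroups $J(\alpha)_p = J$, where $p$ runs over the prime divisors of $\alpha-1$. All three invariants (order, nilpotency class, and exponent) behave well under direct products: the order is the product of the orders, the exponent is the least common multiple of the exponents, and the nilpotency class is the maximum of the nilpotency classes. So the proof reduces to a case-by-case bookkeeping argument that combines the Sylow-by-Sylow data already established.

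First I would collect the numerical data. By Theorem \ref{main1}, $|J|$ equals $p^{7m}$ in Case 1, $2^{7m-3}$ in Case 2, and $3^{10}$ in Case 3 (where $m = v_p(\alpha-1)$). Write $|\alpha-1| = \prod_p p^{m_p}$. In each of the six sub-cases of the statement, I would simply multiply the Sylow orders together, comparing the extra factors of $2$ and $3$ picked up by Cases 2 and 3 against the ``expected'' $|\alpha-1|^7$. For instance, when $v_2(\alpha-1) > 1$ and $\alpha$ is 3-admissible, only the prime $2$ contributes a deviation (dividing by $2^3$), yielding $|\alpha-1|^7/8$; when $\alpha$ is not 3-admissible, the prime $3$ additionally contributes a factor of $3^{10}/3^7 = 27$; and so on through all six combinations.

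Next, for the exponent, I would invoke Propositions \ref{z3}, \ref{z3p2}, \ref{z3p3}, and \ref{mau}, according to which the exponent of $J$ is $p^{3m}$ in Case 1, $2^{3m-1}$ in Case 2 with $m > 1$, $8$ in Case 2 with $m = 1$ (so $\alpha \in \{-1,3\}$), and $81$ in Case 3. Taking the least common multiple over all primes $p \mid \alpha-1$ and again splitting on whether $2$ contributes a ``loss of a factor of $2$'' (when $v_2(\alpha-1) > 1$) and whether $3$ contributes an ``excess factor of $3$'' (when $\alpha$ is not 3-admissible) yields exactly the six formulas in the statement. This is entirely mechanical modulo the observation that $8 = 2^3 \cdot 1$ agrees with $|\alpha-1|^3$ in the borderline $m = 1$ sub-case.

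Finally, the nilpotency class of $G(\alpha)$ is the maximum over $p$ of the class of $J(\alpha)_p$. By Theorem \ref{main2} this class is $5$ in Cases 1 and 2 (for $m > 1$), and $7$ in Case 3; by Proposition \ref{mau} it is $3$ in Case 2 with $m = 1$. Observing that the Case 2 with $m = 1$ situation in isolation (i.e., when $2$ is the only prime divisor of $\alpha-1$ and $v_2(\alpha-1) = 1$) forces $\alpha - 1 = \pm 2$, hence $\alpha \in \{-1,3\}$, gives the class $3$ conclusion for precisely those two values; the presence of any other prime divisor $p$, or of $v_2(\alpha-1) \geq 2$, immediately raises the maximum to at least $5$. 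Finally, a class-$7$ Sylow 3-subgroup occurs exactly when $\alpha$ fails to be 3-admissible, which dominates all other Sylow contributions. The only subtle point---and the one bit of checking worth doing carefully---is confirming that the three nilpotency-class regimes are genuinely exhaustive and mutually exclusive in terms of the conditions stated on $\alpha$; everything else reduces to the arithmetic of $|\alpha-1|$ and the 3-admissibility dichotomy already encoded in the Sylow-by-Sylow results.
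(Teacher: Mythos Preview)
Your proposal is correct and follows essentially the same approach as the paper: decompose $G(\alpha)$ as the direct product of its Sylow subgroups via Theorem \ref{nul}, then read off order, class, and exponent from Theorems \ref{main1} and \ref{main2} and Propositions \ref{mau}, \ref{z3}, \ref{z3p2}, \ref{z3p3}. The paper's proof is in fact just a one-sentence citation of exactly these results, so your more detailed bookkeeping is simply an expanded version of the same argument.
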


\begin{proof} By Theorem \ref{nul}, $G(\alpha)$ is the direct product of its Sylow subgroups, whose orders, nilpotency classes,
and exponents are given in Theorems \ref{main1} and \ref{main2} and Propositions \ref{mau}, \ref{z3}, 
\ref{z3p2}, and \ref{z3p3}.
\end{proof}

\section{Appendix}

By a {\em model} of $J$ we understand a
group $T$ that is an image of $J$ and whose order attains the upper bound stated in Proposition \ref{masrex},
that is, $|T|=p^{7m}$ in Case 1, $|T|=2^{7m-3}$ in Case 2, and $|T|=3^{10}$ in Case 3.

The following well-known gadget (cf. \cite[Chapter III, Section 7]{Z}) will be used repeatedly and implicitly to construct a model of 
$J$.

\begin{theorem} Let $T$ be an arbitrary group and $L$ a cyclic group of finite order $n\in\N$. Suppose that $t\in T$
and that $\Omega$ is an automorphism of $T$ fixing $t$ and such that $\Omega^n$ is conjugation by $t$. Then there
is a group $E$ containing $T$ as a normal subgroup, such that $E/T\cong L$, and for some $g\in E$ of order $n$ modulo $T$, we have 
$g^n=t$ and $\Omega$ is conjugation by $g$.
\end{theorem}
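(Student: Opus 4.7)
The plan is to realise $E$ as a quotient of a semidirect product. I would form $S=T\rtimes_\Omega\Z$, with multiplication $(x,k)(y,\ell)=(x\Omega^k(y),k+\ell)$, so that $\Z$ acts on $T$ through the powers of $\Omega$. Inside $S$, single out the element $\sigma=(t^{-1},n)$ and set $E=S/\langle\sigma\rangle$; this $\sigma$ is tailor-made to force $g^n=t$ in the quotient, where $g$ will be the image of $(1,1)$.

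The first and essentially the only nontrivial step will be to verify that $\sigma$ lies in the centre of $S$, so that $\langle\sigma\rangle$ is automatically normal. For any $(x,0)$ with $x\in T$, a direct computation yields
$$(x,0)\,\sigma\,(x,0)^{-1}=(xt^{-1}\Omega^n(x^{-1}),n),$$
which collapses to $\sigma$ precisely because $\Omega^n$ is conjugation by $t$. For the generator $(1,1)$ one similarly obtains $(1,1)\,\sigma\,(1,-1)=(\Omega(t^{-1}),n)=\sigma$, using that $\Omega$ fixes $t$. Since $S$ is generated by these two families of elements, centrality of $\sigma$ follows.

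Identifying $T$ with $T\times\{0\}$, note that $\sigma^k=(t^{-k},kn)$ has second coordinate $kn\neq 0$ whenever $k\neq 0$, so $\langle\sigma\rangle\cap T=1$ and $T$ embeds as a normal subgroup of $E$. With $g$ the image of $(1,1)$, the equality $(1,n)=(t,0)\sigma$ in $S$ gives $g^n=t$ in $E$. Conjugation by $(1,1)$ sends $(x,0)$ to $(\Omega(x),0)$, so conjugation by $g$ realises $\Omega$ on $T$. Finally, the second-coordinate projection $S\to\Z$ descends to an isomorphism $E/T\to\Z/n\Z\cong L$, since $\langle\sigma\rangle$ maps onto $n\Z$; in particular $g$ has order $n$ modulo $T$.

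The main obstacle is the centrality of $\sigma$, and it is precisely there that both hypotheses enter in tandem: fixing $t$ is what makes $\sigma$ commute with the new cyclic generator, while $\Omega^n$ being conjugation by $t$ is what makes $\sigma$ commute with all of $T$. Once centrality is secured, every remaining assertion follows at once from the normal form of elements in a semidirect product.
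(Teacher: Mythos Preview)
Your construction is correct and is the standard route to such cyclic extensions: form $T\rtimes_\Omega\Z$, locate the central element $(t^{-1},n)$, and quotient by it. The paper does not actually prove this theorem; it merely cites it as a well-known device from Zassenhaus, so there is no argument in the paper to compare yours against. Your identification of exactly where each hypothesis enters the centrality check is spot on.

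One caveat on conventions. Throughout this paper ``conjugation by $g$'' means $x\mapsto g^{-1}xg$ (for instance, in the appendix one has $X\Omega=X^{Z_0}=Z_0^{-1}XZ_0$). With your multiplication $(x,k)(y,\ell)=(x\,\Omega^{k}(y),k+\ell)$, the element $g$ you produce satisfies $gxg^{-1}=\Omega(x)$, and your centrality computation for $(t^{-1},n)$ actually uses $\Omega^{n}(x)=txt^{-1}$. Under the paper's convention $(t^{-1},n)$ is \emph{not} central in your $S$; the fix is simply to let $\Z$ act via $\Omega^{-1}$, i.e., set $(x,k)(y,\ell)=(x\,\Omega^{-k}(y),k+\ell)$, after which every line of your argument goes through verbatim. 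This is a cosmetic adjustment, not a gap in the mathematics.
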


Suppose first we are in Case 1. It turns out that 
$Z_3(J)=\langle  A^{p^{m}}, B^{p^{m}}, C^{p^{m}}\rangle$, a normal abelian subgroup of $J$ 
of order $p^{4m}$. The defining relations of $J$ allow us to see how $A$, $B$, and $C$
conjugate the given generators of $Z_3(J)$. This prompts the construction of $J$ below.

\begin{theorem}\label{model} Suppose we are in Case 1. Then there is a model of $J$.
\end{theorem}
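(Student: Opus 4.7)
\emph{Proof plan.} The plan is to construct a group $T$ of order $p^{7m}$ that is an image of $J$, by building $T$ as a tower of three cyclic extensions using the gadget theorem recalled above. Combined with Proposition~\ref{masrex}, which bounds the order of any image of $J$ by $p^{7m}$, this will make $T$ a model.

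The construction starts from the abelian group
$$
N=\langle X,Y,Z\mid X^{p^{2m}}=1,\; X^{p^m}Y^{p^m}=1,\; Z^{p^m}=1,\; XY=YX,\; YZ=ZY,\; XZ=ZX\rangle,
$$
which by a direct count has order $p^{4m}$; it will play the role of $Z_3(J)=\langle A^{p^m},B^{p^m},C^{p^m}\rangle$. From $N$ one adjoins, in turn, three new elements $z_0,x_0,y_0$, each of order $p^m$ modulo the previous group, subject to $z_0^{p^m}=Z$, $x_0^{p^m}=X$, $y_0^{p^m}=Y$ and modelling $C,A,B$ respectively. Each adjunction invokes the gadget applied to an automorphism whose formulas on the current generators are read off from the Macdonald relations; for instance, $\Omega_C$ sends $X\mapsto X^\alpha$, $Y\mapsto Y^{\alpha^{*}}$, $Z\mapsto Z$, where $\alpha^{*}$ is the inverse of $\alpha$ modulo $p^{3m}$. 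At the end, $|T|=p^{4m}\cdot p^m\cdot p^m\cdot p^m=p^{7m}$.

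At each step one must verify the two hypotheses of the gadget: that the proposed automorphism $\Omega$ fixes the targeted element $t$ of the current group, and that $\Omega^{p^m}$ equals conjugation by $t$. Both verifications reduce, via standard commutator expansions such as $[A,C]=A^{\alpha-1}$, $A^B=AC$, and $(BC^{-1})^n=B^{1+\alpha^{-1}+\cdots+\alpha^{-(n-1)}}C^{-n}$, to the valuation identity $\alpha^{p^m}\equiv 1\pmod{p^{2m}}$ of Lemma~\ref{valdif}, together with the consequence $1+\alpha+\cdots+\alpha^{p^m-1}\equiv p^m\pmod{p^{2m}}$. The hard part will be the last extension, the one introducing $y_0$: one must verify that the prescribed $\Omega_B$ respects every relation accumulated on $N$, on $z_0$, and on $x_0$, and that each commutator computation performed inside the partially-built group is consistent with the analogous computation in $J$. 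This is where the delicate choice of the order of extensions and the tracking of the auxiliary exponents (the quantities like $1+\alpha^{-1}+\cdots+\alpha^{-(p^m-1)}\bmod p^{2m}$ appearing in $\Omega_B(Z)$ and $\Omega_B(x_0)$) becomes essential.

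Finally, by construction the elements $x_0,y_0\in T$ satisfy the Macdonald relations $x_0^{[x_0,y_0]}=x_0^\alpha$ and $y_0^{[y_0,x_0]}=y_0^\alpha$, together with $x_0^{p^{3m}}=1=y_0^{p^{3m}}$; by Theorem~\ref{predela} the universal property of the presentation of $J$ then supplies a surjection $J\twoheadrightarrow T$, whence $T$ is a model.
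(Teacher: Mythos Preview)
Your plan is exactly the paper's: start from the abelian group $N$ of order $p^{4m}$ and adjoin $z_0,x_0,y_0$ in that order by three applications of the cyclic-extension gadget. So the architecture is right.

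Where the plan understates the work is in the middle extension and in the claim that ``both verifications reduce to $\alpha^{p^m}\equiv 1\pmod{p^{2m}}$''. The automorphism $\Psi$ introducing $x_0$ cannot simply send $Y\mapsto Z^{-1}Y$; one needs $Y\mapsto Z^{-1}Y^{1+b}$ for a carefully chosen constant $b$. In the paper $b=p^mk/2$ when $p\neq 3$ (after first replacing $\alpha$ by $\alpha+p^{3m}$ if necessary to make $k$ even), and when $p=3$ an extra correction $c=k^23^{2m-1}$ must be added. Without this $b$, the check that $\Psi^{p^m}$ is conjugation by $X$ fails on $Y$: one gets an unwanted factor $Y^{{p^m\choose 2}(1-\alpha)}$ that the $b$-term is designed to cancel.

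The genuinely hard verification is not that $\Pi^{p^m}$ is inner, but that the assignment $\Pi\colon X_0\mapsto X_0Z_0$, $Z_0\mapsto Y^kZ_0$ preserves the accumulated relation $X_0^{Z_0}=X_0^\alpha$. This amounts to showing $(X_0Z_0)^{Y^kZ_0}=(X_0Z_0)^\alpha$, which after expansion becomes a congruence modulo $p^{3m}$ involving $\alpha(\alpha^\alpha-1)/(\alpha-1)$ and $\binom{\alpha}{3}(\alpha-1)^2$; it is here that the $p=3$ correction $c$ earns its keep, and the identity does not follow from $\alpha^{p^m}\equiv1\pmod{p^{2m}}$ alone. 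You have correctly located the difficulty in the last extension, but the specific constants and the $p=3$ case distinction are the missing ideas you would still need to supply.
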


\begin{proof} We have $\alpha=1+k p^m$, with $k\in\N$. By adding $p^{3m}$ to $\alpha$, if necessary,
we may assume that $k$ is even. 

We start with an abelian group $\langle X,Y,Z\rangle$ of order $p^{4m}$ and defining
relations
$$
XY=YX,\; YZ=ZY,\; XZ=ZX,\; X^{p^{2m}}=1, X^{p^{m}}Y^{p^{m}}=1,\;  Z^{p^{m}}=1.
$$

We next construct a cyclic extension $\langle X,Y,Z_0\rangle$ 
of $\langle X,Y,Z\rangle$ of order $p^{5m}$, where
$Z_0^{p^{m}}=Z$, by means of an automorphism $\Omega$ 
of $\langle X,Y,Z\rangle$ that fixes $Z$ and such that
$\Omega^{p^{m}}$ is conjugation by~$Z$, that is, the trivial automorphism. In order to achieve this goal,
we consider the assignment
$$
X\mapsto X^{1+ k p^m},\; Y\mapsto Y^{1- k p^m},\; Z\mapsto Z,
$$
where $\beta=1-k p^m$ is the inverse of $\alpha$ modulo $p^{2m}$. We easily verify that the defining relations
of $\langle X,Y,Z\rangle$ are preserved. Thus
the above assignment extends to an endomorphism $\Omega$ of $\langle X,Y,Z\rangle$, which is clearly surjective
and hence an automorphism.

We have $\alpha^{p^{m}}\equiv 1\mod p^{2m}$ by Lemma \ref{valdif}, so $\beta^{p^{m}}\equiv 1\mod p^{2m}$, whence
$\Omega^{p^{m}}$ is the trivial automorphism of $\langle X,Y,Z\rangle$. This produces the required extension, 
where $\Omega$ is conjugation by $Z_0$. We readily verify that $\langle X,Y,Z_0\rangle$
has defining relations:
$$
X^{Z_0}=X^\alpha,\; {}^{Z_0} Y=Y^\alpha,\; XY=YX,\; X^{p^{2m}}=1,\; X^{p^{m}}Y^{p^{m}}=1,
Z_0^{p^{2m}}=1.
$$

We next construct a cyclic extension $\langle X_0,Y,Z_0\rangle$ 
of $\langle X,Y,Z_0\rangle$ of order $p^{6m}$ with
$X_0^{p^{m}}=X$, by means of an automorphism $\Psi$ 
of $\langle X,Y,Z_0\rangle$ that fixes $X$ and such that
$\Psi^{p^{m}}$ is conjugation by~$X$. For this purpose, recalling that $k$ is even, we set
$$
c=\begin{cases} 0 & \text{ if }p\neq 3,\\ k^2 3^{2m-1} & \text{ if }p=3,\end{cases}\quad 
b=\begin{cases} p^m k/2 & \text{ if }p\neq 3,\\ 3^m k/2+k^2 3^{2m-1} & \text{ if }p=3,\end{cases}
$$
so that $b=p^m k/2+c$, and consider the assignment
$$
X\mapsto X,\; Y\mapsto Z_0^{-p^m}\, Y^{1+b}=Z^{-1} Y^{1+b},\; Z_0\mapsto Z_0 X^{-k}.
$$

Let us verify that the defining relations of $\langle X,Y,Z_0\rangle$ are preserved. This is obvious for the first 
and fourth relations. As for the sixth, we need to see that $(Z_0 X^{-k})^{p^{2m}}=1$. This holds because
$$
(Z_0 X^{-k})^{p^{2m}}=Z_0^{p^{2m}} X^{-k(\alpha^{p^{2m}}-1)/(\alpha-1)}=1
$$
by Lemma \ref{valdif}. The second, third and fifth relations are easily seen to be preserved, using that $Z_0^{-p^{m}}=Z^{-1}$ commutes with $X$ and $Y$, as well as $Z^{p^m}=1=Y^{p^{2m}}$.
Thus the above assignment extends to an endomorphism $\Psi$ of $\langle X,Y,Z_0\rangle$. As $\gcd(1+b,p^{2m})=1$,  $\Psi$
is an automorphism. We next show that $\Psi^{p^{m}}$ is 
conjugation by $X$. By definition, $\Psi^{p^{m}}$ fixes $X$. Moreover, 
$$Z_0\Psi^{p^{m}}=Z_0X^{-k p^{m}}=Z_0X^{1-\alpha},\; Z_0^X=Z_0 Z_0^{-1}X^{-1}Z_0 X=Z_0X^{-\alpha} X=Z_0X^{1-\alpha}.
$$
It remains to verify that $\Psi^{p^{m}}$ fixes $Y$. By definition, we have 
$$
Y\Psi=Z^{-1} Y Y^b,
$$
which implies that $\Psi$ fixes $Y^{\alpha-1}$ and $Y^b$. On the other hand, use of Lemma \ref{valdif} yields
$$
Z\Psi=Z_0^{p^m}\Psi=(Z_0\Psi)^{p^m}=(Z_0 X^{-k})^{p^m}=Z_0^{p^m}
X^{-k(\alpha^{p^m}-1)/(\alpha-1)}=ZX^{-k p^m}=Z X^{1-\alpha}=Z Y^{\alpha-1}.
$$
Thus
$$
Y\Psi^2=(Z^{-1}YY^b)\Psi=Z^{-1}Y^{1-\alpha}Z^{-1} Y Y^b Y^b=Z^{-2}Y Y^{1-\alpha} Y^{2b},
$$
$$
Y\Psi^3=(Z^{-2}Y Y^{1-\alpha} Y^{2b})\Psi=Z^{-2}Y^{2(1-\alpha)}Z^{-1} Y Y^b Y^{1-\alpha} Y^{2b}=Z^{-3}Y Y^{3(1-\alpha)} Y^{3b},
$$
and in general
$$
Y\Psi^i=Z^{-i}Y Y^{{{i}\choose{2}}(1-\alpha)} Y^{ib},\quad i\geq 2.
$$
In particular,
$$
Y\Psi^{p^m}=Z^{-p^m}Y Y^{{{p^m}\choose{2}}(1-\alpha)} Y^{p^m b}=Y.
$$
This produces the required extension, where $\Psi$ is conjugation by $X_0$. We readily verify that $\langle X_0,Y,Z_0\rangle$ has defining relations 
$$
X_0^{p^{3m}}=1,\; X_0^{Z_0}=X_0^\alpha,\; 
 Y^{X_0}=Z_0^{-p^m}\, Y^{1+b},\;
{}^{Z_0} Y=Y^\alpha,
$$
$$
X_0^{p^{m}}Y=YX_0^{p^{m}},\;
X_0^{p^{2m}}Y^{p^m}=1,\;
Z_0^{p^{2m}}=1.
$$
Here $X_0^{Z_0}=X_0^\alpha$ is equivalent to $Z_0^{X_0}=Z_0 X^{-k}=Z_0 X_0^{-k p^{m}}=
Z_0 X_0^{1-\alpha}$.

We finally construct a cyclic extension $\langle X_0,Y_0,Z_0\rangle$ 
of $\langle X_0,Y,Z_0\rangle$ of order $p^{7m}$ with
$Y_0^{p^{m}}=Y$, by means of an automorphism $\Pi$
of $\langle X_0,Y,Z_0\rangle$ that fixes $Y$ and such that
$\Pi^{p^{m}}$ is conjugation by~$Y$. With this aim in mind, we consider the assignment 
$$
X_0\mapsto X_0Z_0,\; Y\mapsto Y,\; Z_0\mapsto Y^{k} Z_0. 
$$
Let us verify that the defining relations of $\langle X_0,Y,Z_0\rangle$ are preserved. Regarding the first relation,
$$
(X_0Z_0)^{p^{3m}}=Z_0^{p^{3m}} X_0^{\alpha (\alpha^{p^{3m}}-1)/(\alpha-1)}=1
$$
by Lemma \ref{valdif}. Likewise, the seventh relation is preserved, as Lemma \ref{valdif} ensures
$$
(Y^{k}Z_0)^{p^{2m}}= Y^{k(\alpha^{p^{2m}}-1)/(\alpha-1)} Z_0^{p^{2m}} =Y^{k p^{2m}} Z_0^{p^{2m}}=1.
$$
The preservation of the fourth relation is obvious. Regarding the fifth relation, Lemma \ref{valdif} yields
$$
(X_0 Z_0)^{p^{m}}=Z_0^{p^{m}} X_0^{\alpha (\alpha^{p^{m}}-1)/(\alpha-1)}=Z X^\ell,\quad \ell\in\Z,
$$
where both factors commute with $Y$.
As for the sixth relation, observe that
$$
(X_0 Z_0)^{p^{2m}}=Z_0^{p^{2m}} X_0^{\alpha (\alpha^{p^{2m}}-1)/(\alpha-1)}=X_0^{p^{2m}}
$$
by Lemma \ref{valdif}, so $(X_0 Z_0)^{p^{2m}}Y^{p^m}=X_0^{p^{2m}}Y^{p^m}=1$. In regards to the third relation, we must prove 
$$
Y^{X_0 Z_0}=(Y^{k}Z_0)^{-p^m} Y Y^b.
$$
Note that $(Y^b)^{Z_0}=Y^b$, so
$$
Y^{X_0 Z_0}=(Z_0^{-p^m} Y Y^b)^{Z_0}=Z_0^{-p^m} Y^{Z_0}\, Y^b=Z_0^{-p^m} Y^{1-k p^m}\, Y^b.
$$
On the other hand,  Lemma \ref{valdif} ensures that
$$
(Y^k Z_0)^{p^m}=Y^{k (\alpha^{p^m}-1)/(\alpha-1)}Z_0^{p^m}=
Y^{k p^m}Z_0^{p^m},
$$
so
$$
(Y^{k}Z_0)^{-p^m} Y Y^b=Z_0^{-p^m}Y^{-k p^m}Y\, Y^b=Z_0^{-p^m} Y^{1-k p^m}\, Y^b,
$$
as required. The preservation of the second relation requires more work. We must show that
\begin{equation}
\label{mus}
(X_0 Z_0)^{Y^{k} Z_0}=(X_0 Z_0)^{\alpha}.
\end{equation}
We begin by obtaining a formula for $X_0^{Y^{k}}$. From $X_0^{-1}Y X_0=Y^{X_0}=Z^{-1} Y Y^b$, we deduce
$Y^{-1}X_0^{-1} Y=Z^{-1} Y^b X_0^{-1}$, and therefore 
\begin{equation}
\label{antesqw}
X_0^Y=X_0 Y^{-b} Z.
\end{equation}
Thus
\begin{equation}
\label{antes}
X_0^{Y^{k}}=X_0 Y^{-bk} Z^k=X_0 Y^{-bk} Z_0^{k p^m}.
\end{equation}
We next obtain a formula for $Z_0^{Y^{k}}$. From $Z_0Y Z_0^{-1}={}^{Z_0}Y=Y^\alpha$, we infer
$Y^{-1}Z_0 Y=Y^{\alpha-1}Z_0$. Noting that $[Z_0,Y^{\alpha-1}]=1$, we deduce $Z_0^Y=Z_0 Y^{\alpha-1}$,
and hence 
\begin{equation}
\label{pantes}
Z_0^{Y^k}=Z_0 Y^{k(\alpha-1)}.
\end{equation}
Using (\ref{antes}), (\ref{pantes}), and $[Z_0,Y^{\alpha-1}]=1$, we obtain
$$
(X_0 Z_0)^{Y^k}=X_0 Y^{-bk} Z_0^{k p^m}Z_0Y^{k(\alpha-1)}=X_0 Y^{k(-b+(\alpha-1))} Z_0^{\alpha}.
$$
As $Z_0$ commutes with $Y^b$ and $Y^{\alpha-1}$, we infer
\begin{equation}
\label{artes1}
(X_0 Z_0)^{Y^k Z_0}=X_0^\alpha Y^{k(-b+(\alpha-1))} Z_0^{\alpha},
\end{equation}

On the other hand, we have
\begin{equation}
\label{artes2}
(X_0 Z_0)^{\alpha}=Z_0^{\alpha} X_0^{\alpha (\alpha^{\alpha}-1)/(\alpha-1)}.
\end{equation}

Taking into account (\ref{artes1}) and (\ref{artes2}), as well as the fact that $Z_0$
commutes with $Y^b$ and $Y^{\alpha-1}$, we see that (\ref{mus}) is equivalent to
\begin{equation}
\label{artes3}
X_0^{\alpha (\alpha^{\alpha}-1)/(\alpha-1)}=X_0^{\alpha\alpha^{\alpha}}\, Y^{k(-b+(\alpha-1))}.
\end{equation}
Making use of the fundamental relation $X_0^{p^{2m}}Y^{p^m}=1$ and the meanings of $b$ and $c$, we find that
$$
Y^{k(\alpha-1)}=X_0^{-(\alpha-1)^2},\; Y^{-kb}=X_0^{(\alpha-1)^2/2+kc3^m}.
$$
Thus (\ref{artes3}) is equivalent to
\begin{equation}
\label{artes4}
X_0^{\alpha (\alpha^{\alpha}-1)/(\alpha-1)}=X_0^{\alpha\alpha^{\alpha}-(\alpha-1)^2/2+kc 3^m}.
\end{equation}
Here
\begin{equation}
\label{artes5}
\alpha^{\alpha}=(1+(\alpha-1))^{\alpha}=1+\alpha(\alpha-1)+{{\alpha}\choose{2}}(\alpha-1)^2+{{\alpha}\choose{3}}(\alpha-1)^3+
{{\alpha}\choose{4}}(\alpha-1)^4+\cdots
\end{equation}
We see from (\ref{artes5}) that $\alpha^\alpha\equiv 1+\alpha(\alpha-1)\mod p^{3m}$, so
\begin{equation}
\label{artes6}
\alpha\alpha^\alpha\equiv \alpha+\alpha^2(\alpha-1)\mod p^{3m}.
\end{equation}
We also derive from (\ref{artes5}) that 
$$
(\alpha^{\alpha}-1)/(\alpha-1)\equiv \alpha+{{\alpha}\choose{2}}(\alpha-1)+{{\alpha}\choose{3}}(\alpha-1)^2\mod p^{3m},
$$
and since $\alpha p^{2m}\equiv p^{2m}\mod p^{3m}$, we infer
\begin{equation}
\label{artes7}
\alpha (\alpha^{\alpha}-1)/(\alpha-1)\equiv \alpha^2+(\alpha-1)^2/2+{{\alpha}\choose{3}}(\alpha-1)^2\mod p^{3m}.
\end{equation}
From (\ref{artes6}) and (\ref{artes7}) we see that (\ref{artes4}) follows from
$$
\alpha^2+(\alpha-1)^2/2+{{\alpha}\choose{3}}(\alpha-1)^2\equiv \alpha+\alpha^2(\alpha-1)-(\alpha-1)^2/2+kc 3^m
\mod p^{3m},
$$
which is equivalent to
\begin{equation}
\label{artes8}
\alpha^2+(\alpha-1)^2+{{\alpha}\choose{3}}(\alpha-1)^2\equiv \alpha+\alpha^2(\alpha-1)+kc 3^m
\mod p^{3m}.
\end{equation}
Here
$$
\alpha^2+(\alpha-1)^2\equiv 1+2(\alpha-1)+2(\alpha-1)^2\equiv \alpha+\alpha^2(\alpha-1)
\mod p^{3m},
$$
so (\ref{artes8}) means
$$
{{\alpha}\choose{3}}(\alpha-1)^2\equiv kc 3^m\mod p^{3m},
$$
which is readily seen to be true whether $p\neq 3$ or $p=3$. This
completes the verification that the second defining relation of $\langle  X_0,Y,Z_0\rangle$ is preserved, which
ensures the existence of an automorphism~$\Pi$
of $\langle X_0,Y,Z_0\rangle$ fixing $Y$ and extending the given assignment. We must now verify that $\Pi^{p^m}$
is conjugation by~$Y$. By definition, $Y\Pi^{p^m}=Y$ and 
$$
Z_0\Pi^{p^m}=Y^{k p^m}Z_0=Y^{\alpha-1}Z_0=Z_0^Y.
$$
It remains to show that $X_0\Pi^{p^m}=X_0^Y$. The calculation of $X_0^Y$ is achieved in (\ref{antesqw}).
On the other hand, repeated application of $X_0\Pi=X_0 Z_0$, 
$Y=Y\Pi$, $Z_0\Pi=Y^{k} Z_0$ yields
$$
X_0\Pi^i=X_0 Y^{k\alpha(1+2\alpha+3\alpha^2+\cdots+(i-1)\alpha^{i-2})}Z_0^i,\quad i\geq 2.
$$
Set $U=k\alpha(1+2\alpha+3\alpha^2+\cdots+(p^m-1)\alpha^{p^m-2})$. We are thus reduced to show that
$$
U\equiv -b\mod p^{2m}.
$$
Now
$$
\alpha^i=(1+(\alpha-1))^i\equiv 1+i(\alpha-1)\mod   p^{2m},\quad i\geq 0,
$$
so 
$$
U\equiv k\alpha(1+2+\cdots+(p^m-1)+(\alpha-1)(1\times 2+2\times 3+\cdots+
(p^m-2)\times (p^m-1)))\mod p^{2m}.
$$
As is well known, we have
$$
\underset{1\leq i\leq n}\sum i=\frac{n(n+1)}{2},\; \underset{1\leq i\leq n}\sum i^2=\frac{n(n+1)(2n+1)}{6},\quad n\geq 1,
$$
which implies
$$
\underset{1\leq i\leq n-2}\sum i(i+1)=\frac{(n-2)(n-1)n}{3},\quad n\geq 3.
$$
Set $n=p^m$. We then have
$$
k\alpha \frac{(n-1)n}{2}\equiv \frac{k\alpha (p^m-1)p^m}{2}\equiv \frac{-k p^m}{2}\mod p^{2m},
$$
so we are reduced to show that
$$
k\alpha(\alpha-1)\frac{(n-2)(n-1)n}{3}\equiv -c\mod p^{2m},
$$
which is readily seen to be true whether $p\neq 3$ or $p=3$. This produces the required extension, where $\Pi$ is conjugation by $Y_0$. From $X_0^{Y_0}=X_0Z_0$, we deduce $[X_0,Y_0]=Z_0$, whence $\langle X_0,Y_0,Z_0\rangle=\langle X_0,Y_0\rangle$.
Moreover, we have $X_0^{Z_0}=X_0^\alpha$ and $Y_0^{-1}Z_0Y_0=Z_0^{Y_0}=Y^{k}Z_0=
Y_0^{k p^m}Z_0=Y_0^{\alpha-1}Z_0$, which implies
${}^{Z_0} Y_0=Y^\alpha$. Finally, we also have $X_0^{p^{3m}}=1=Y_0^{p^{3m}}$.
\end{proof}

\begin{note}
The attentive reader will notice that if $p=3$ the restriction $m>1$ or $(\alpha-1)/3\equiv 1\mod 3$
was never used in the proof of Theorem \ref{model}. Thus, if  $m=1$ and $(\alpha-1)/3\equiv -1\mod 3$,
Theorem \ref{model} still constructs an image of $J$ of order $3^7$. However,
$|J|=3^{10}$ in this case, as seen in Theorem \ref{model3} below.
\end{note}

Suppose next we are in Case 2. It transpires that
$\langle  A^{2^{m}}, B^{2^{m}}, C^{2^{m-1}}\rangle$, a normal abelian subgroup of $J$ of
order $2^{4m-2}$, is equal to $Z_3(J)$ if $m>1$ and to $Z_2(J)$ if $m=1$.
Using the defining relations of~$J$, we can determine the precise way in which $A$, $B$, and $C$
conjugate the given generators of~$Z_3(J)$, which suggest the following construction of $J$.

\begin{theorem}\label{model2} Suppose we are in Case 2. Then there is a model of $J$.
\end{theorem}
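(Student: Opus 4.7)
The plan is to mimic the three-stage cyclic extension construction of Theorem \ref{model}, adjusted for the $p=2$ peculiarities. First dispose of the case $m=1$: Proposition \ref{mau} already exhibits $Q_{16}$ as an image of $J$ of order $2^4 = 2^{7\cdot 1-3}$, so that provides the model. For the remainder, assume $m>1$, write $\alpha=1+k2^m$ with $k$ odd, and (if necessary) replace $\alpha$ by $\alpha+2^{3m-1}$ so that $k$ has whatever parity the later calculations require (the analogue of the parity adjustment in Case 1).

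Stage~0 starts with the abelian group $N=\langle x,y,z\rangle$ of order $2^{4m-2}$ and presentation described in Proposition \ref{z3p2}, which will eventually realize $Z_3(J)$ via $x\mapsto A^{2^m}$, $y\mapsto B^{2^m}$, $z\mapsto C^{2^{m-1}}$. Stage~1 enlarges $N$ to $\langle x,y,z_0\rangle$ of order $2^{5m-3}$ by a cyclic extension of degree $2^{m-1}$, with $z_0^{2^{m-1}}=z$. This requires an automorphism $\Omega$ of $N$ fixing $z$ with $\Omega^{2^{m-1}}$ trivial; the natural choice is $x\mapsto x^\alpha$, $y\mapsto y^\beta$, $z\mapsto z$, where $\beta$ is the inverse of $\alpha$ modulo $2^{2m-1}$. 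Lemma \ref{valdif2}, applied with $a=m$ and $b=m-1$, shows that $\alpha^{2^{m-1}}\equiv 1\pmod{2^{2m-1}}$, which is exactly what is needed so that $\Omega^{2^{m-1}}$ is trivial and the defining relations of $N$ are preserved.

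Stage~2 adjoins $x_0$ with $x_0^{2^m}=x$, via an automorphism $\Psi$ of $\langle x,y,z_0\rangle$ fixing $x$, such that $\Psi^{2^m}$ is conjugation by $x$. Following Case~1, I would set
\[
x\mapsto x,\qquad y\mapsto z_0^{-2^{m-1}}y^{1+b},\qquad z_0\mapsto z_0 x^{-k},
\]
for a carefully chosen $b$ of the shape $2^{m-1}k+c$ with correction term $c$; the $p=2$ complication is that $\binom{\alpha}{2}(\alpha-1)^2$ contributes an extra factor of $2$ compared to the odd case, so $b$ must be adjusted so the key relation $x^{2^{2m-2}}=z^{2^m}$ (and its $z_0$-twisted form) survives. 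One then verifies the Stage~1 relations are preserved and computes $\Psi^{2^m}$ explicitly via the usual iteration formula $y\Psi^i=z^{-i}y\,y^{\binom{i}{2}(1-\alpha)}y^{ib}$, checking it equals conjugation by $x$ at $i=2^m$ using Lemma \ref{valdif2}. Stage~3 adjoins $y_0$ with $y_0^{2^m}=y$, via $\Pi:x_0\mapsto x_0 z_0,\ y\mapsto y,\ z_0\mapsto y^k z_0$, again fixing $y$ and raising to the $2^m$ power to conjugation by $y$. The iteration gives $x_0\Pi^i=x_0\,y^{k\alpha(1+2\alpha+\cdots+(i-1)\alpha^{i-2})}z_0^i$, and the verification at $i=2^m$ reduces to a congruence of the form $k\alpha\cdot(n-1)n/2\equiv -2^{m-1}k\pmod{2^{2m-1}}$ with $n=2^m$, plus a similar congruence controlling the quadratic Bernoulli-type sum, both handled by elementary arithmetic.

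The final group $\langle x_0,y_0,z_0\rangle$ has order $2^{4m-2}\cdot 2^{m-1}\cdot 2^m\cdot 2^m=2^{7m-3}$ and, by construction, satisfies $[x_0,y_0]=z_0$, $x_0^{z_0}=x_0^\alpha$, ${}^{z_0}y_0=y_0^\alpha$, and $x_0^{2^{3m-1}}=1=y_0^{2^{3m-1}}$, so the assignment $A\mapsto x_0$, $B\mapsto y_0$ extends to an epimorphism from $J$ onto it by Theorem \ref{predela}. The main obstacle, as in Theorem \ref{model}, will be the Stage~3 verification that the conjugation relation $x_0^{z_0}=x_0^\alpha$ is preserved by $\Pi$: this is the analogue of equation \eqref{mus} and expands into several binomial congruences modulo $2^{3m-1}$, where the $p=2$ loss (the extra factor of $2$ in $\binom{\alpha}{2}$ and in $\alpha(\alpha-1)(\alpha+1)/3$ type expressions) must be absorbed by the correction constant $c$ in the definition of $b$. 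Once that calibration is correctly made, the remaining verifications are routine applications of Lemma \ref{valdif2} and the closed-form identities for $\sum i$ and $\sum i(i+1)$ used in Theorem \ref{model}.
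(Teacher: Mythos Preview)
Your plan mirrors the paper's three-stage cyclic extension, but two points fail.

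First, invoking Proposition~\ref{mau} for the case $m=1$ is circular: that proposition relies on $|J|=16$ from Theorem~\ref{main1}, which in turn rests on the theorem you are proving. The paper's construction treats all $m\geq 1$ uniformly; no case split is needed (and, incidentally, replacing $\alpha$ by $\alpha+2^{3m-1}$ shifts $k$ by the even number $2^{2m-1}$, so cannot change its parity---no such adjustment is available or required here).

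Second, your Stage~2 assignment $y\mapsto z_0^{-2^{m-1}}y^{1+b}=z^{-1}y^{1+b}$ does not extend to an endomorphism of $\langle x,y,z_0\rangle$, for \emph{any} choice of $b$. Checking the relation ${}^{z_0}y=y^\alpha$ requires ${}^{z_0x^{-k}}(z^{-1}y^{1+b})=(z^{-1}y^{1+b})^\alpha$; since $x$ commutes with $z$ and $y$, and $z_0$ commutes with its power $z$, the left side equals $z^{-1}y^{\alpha(1+b)}$ while the right side equals $z^{-\alpha}y^{\alpha(1+b)}$, forcing $z^{\alpha-1}=1$. But $\alpha-1=k\,2^m$ with $k$ odd, and $z$ has order $2^{m+1}$ (from $z^{2^m}=x^{2^{2m-2}}\neq 1$ and $z^{2^{m+1}}=x^{2^{2m-1}}=1$), so $z^{\alpha-1}=x^{2^{2m-2}}\neq 1$. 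The paper instead takes
\[
y\mapsto z_0^{-2^m}y^{1+2^{m-1}k}=z^{-2}y^{1+2^{m-1}k},\qquad z_0\mapsto z_0x^{-k},
\]
and then the same check requires only $z^{2(\alpha-1)}=1$, which holds. This doubling of the $z_0$-exponent is the genuine $p=2$ adjustment: because the Stage~1 extension has degree $2^{m-1}$ rather than $2^m$, the element $(B^{2^m})^A$ being modelled contributes $C^{-2^m}=z_0^{-2^m}=z^{-2}$, not $z^{-1}$. With this choice no correction constant in $b$ is needed, and the iteration reads $y^{\Psi^n}=z^{-2n}y^{1+n(2-n)2^{m-1}k}$ rather than the Case~1 formula you quote.
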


\begin{proof} We have $\alpha = 1 + 2^mk$, with $k\in\N$ odd. 
We start with an abelian group $\langle x,y,z\rangle$ of order $2^{4m-2}$ generated by elements $x,y,z$ subject to the
defining relations:
    \[
    xy=yx,\;
    xz=zx,\;
    yz=zx,\;
    z^{2^m}=x^{2^{2m-2}},\;
    x^{2^{m-1}}y^{2^{m-1}}=1,\;
    x^{2^{2m-1}}=1.
    \]
    
		We next construct a cyclic extension $\langle x,y,z_0\rangle$ 
of $\langle x,y,z\rangle$ of order $2^{5m-3}$, where
$z_0^{2^{m-1}}=z$, by means of an automorphism $\Omega$ 
of $\langle x,y,z\rangle$ that fixes $z$ and such that
$\Omega^{2^{m-1}}$ is conjugation by~$z$, that is, the trivial automorphism. In order to achieve this goal,
we consider the assignment
		\[
    x\mapsto x^\alpha,\;
    y\mapsto y^\beta,\;
    z\mapsto z,
    \]
		where $\beta=1-2^mk$ is the inverse of $\alpha$ modulo $2^{2m}$. The defining relations of $\langle x,y,z\rangle$ are easily seen to be preserved. Thus the above assignment extends to an endomorphism $\Omega$ of $\langle x,y,z\rangle$ which is clearly surjective
		and hence an automorphism of $\langle x,y,z\rangle$. Let us verify that $\Omega^{2^{m-1}}$ acts trivially on $x,y,z$.
		This is obviously true for $z$, and since $\alpha^{2^{m-1}}\equiv 1 \mod 2^{2m-1}$ and $\beta^{2^{m-1}}\equiv 1 \mod 2^{2m-1}$,
		it is also true of $x$ and $y$. This produces the required extension, where $\Omega$ is conjugation by $z_0$. We readily verify that $\langle x,y,z_0\rangle$ has defining relations:
    \[
		xy = yx,\;
    x^{z_0} = x^\alpha,\;
    {}^{z_0}y=y^\alpha,\;
    z_0^{2^{2m-1}} = x^{2^{2m-2}},\;
		x^{2^{m-1}} y^{2^{m-1}} = 1,\;
    x^{2^{2m-1}} = 1.
    \]
    
		We next construct a cyclic extension $\langle x_0,y,z_0\rangle$ 
of $\langle x,y,z_0\rangle$ of order $2^{6m-3}$, where
$x_0^{2^{m}}=x$, by means of an automorphism $\Psi$ 
of $\langle x,y,z_0\rangle$ that fixes $x$ and such that
$\Psi^{2^{m}}$ is conjugation by~$x$. For this purpose,
we consider the assignment
	 \[
    x\mapsto x,\;
    y\mapsto z_0^{-2^m}y^{1+2^{m-1}k}=z^{-2} y^{1+2^{m-1}k},\;
    z_0\mapsto z_0x^{-k}.
    \]
    Let us verify that the defining relations of $\langle x,y,z_0\rangle$ are preserved. This is easily seen to be true for
	the first, second, and sixth relations. As for the fifth relation, since $k$ is odd, we have
	$$
	(z^{-2} y^{1+2^{m-1}k})^{2^{m-1}}=z^{-2^m}y^{2^{2m-2}k}y^{2^{m-1}}=x^{-2^{2m-2}}y^{2^{2m-2}}y^{2^{m-1}}=
	x^{-2^{2m-2}}x^{-2^{2m-2}}y^{2^{m-1}}=y^{2^{m-1}},
	$$
	as required. Regarding the fourth relation, we have
	\[
    (z_0x^{-k})^{2^{2m-1}}
    = z_0^{2^{2m-1}} x^{-k(1 + \alpha + \cdots + \alpha^{2^{2m-1}-1})}=z_0^{2^{2m-1}}=x^{2^{2m-2}},
    \]
	since $(\alpha^{2^{2m-1}}-1)/(\alpha-1)\equiv  0\mod 2^{2m-1}$. In regards to third relation, we have
    \[
    {}^{(z_0x^{-k})}(z^{-2}y^{1+2^{m-1}k})= z^{-2} y^{\alpha (1+2^{m-1}k)}=z^{-2\alpha} y^{\alpha (1+2^{m-1}k)}=
		(z^{-2}y^{1+2^{m-1}k})^\alpha,
    \]
		as $2\alpha\equiv 2\mod 2^{m+1}$. Thus the above assignment extends to an endomorphism $\Psi$ of $\langle x,y,z_0\rangle$.
		Since $x,y^{1+2^{m-1}k},z_0\in\mathrm{im}(\Psi)$ and $x^{2^{m-1}}y^{2^{m-1}}=1$, it follows that $\Psi$ is 
		surjective and hence an automorphism of $\langle x,y,z_0\rangle$. 
		
		Let us verify that $\Psi^{2^{m}}$ 
		acts via conjugation by $x$ on $x,y,z$. This is obviously true for $x$. As for~$z_0$, from $x^{z_0}=x^\alpha$ we derive
		$z_0^x=z_0x^{1-\alpha}=z_0 x^{-2^m k}=z_0^{\Psi^{2^m}}$. Regarding $y$, carefully using the defining relations of $\langle x,y,z_0\rangle$ we see by induction that 
		$y^{\Psi^n} = z^{-2n} y^{1+n(2-n)2^{m-1}k}$ for all $n\in\N$. In particular,
    \[
    y^{\Psi^{2^m}}
    = z^{-2(2^m)} y y^{(2-2^m)2^{2m-1}k}
    = y
    = y^x.
    \]
    
		This produces the required extension, where $\Psi$ is conjugation by $x_0$. We readily verify that $\langle x_0,y,z_0\rangle$ has defining relations:
    \[
    y^{x_0} = z_0^{-2^m}y^{1+2^{m-1}k},\;
    x_0^{z_0} = x_0^\alpha,\;
    {}^{z_0}y = y^\alpha,\;
    z_0^{2^{2m-1}} = x_0^{2^{3m-2}},\;
		x_0^{2^{2m-1}}y^{2^{m-1}}=1,\;
		x_0^{2^{3m-1}} = 1,
    \]
		where $x_0^{z_0} = x_0^\alpha$ is equivalent to the given relation $z_0^{x_0}=z_0x^{-k}=z_0x_0^{1-\alpha}$.
		
		We finally construct a cyclic extension $\langle x_0,y_0,z_0\rangle$ 
of $\langle x_0,y,z_0\rangle$ of order $2^{7m-3}$, where
$y_0^{2^{m}}=y$, by means of an automorphism $\Pi$ 
of $\langle x_0,y,z_0\rangle$ that fixes $y$ and such that
$\Pi^{2^{m}}$ is conjugation by~$y$. For this purpose,
we consider the assignment
    \[
    x_0\mapsto x_0z_0,\;
    y\mapsto y,\;
    z_0\mapsto y^kz_0.
    \]
    
    Let us verify that the defining relations of $\langle x_0,y,z_0\rangle$ are preserved. This is obviously true for the third
		relation. As for the first relation, we have
    \[
    y^{x_0z_0}
    = (z_0^{-2^m} y^{1+2^{m-1}k})^{z_0}
    = z_0^{-2^m} y^{\beta (1+2^{m-1}k)}
    = z_0^{-2^m} y^{\beta + 2^{m-1}k},
    \]
    since $\beta 2^{m-1}k\equiv 2^{m-1}k\mod 2^{2m-1}$. On the other hand,
    \[
    (y^kz_0)^{-2^m} y^{1+2^{m-1}k}
    = (y^{k(1+\alpha +\cdots +\alpha^{2^m-1})}z_0^{2^m})^{-1} y^{1+2^{m-1}k},
    \] 
		where $(\alpha^{2^m}-1)/(\alpha-1)\equiv 2^m\mod 2^{2m-1}$, so
    \[
    (y^kz_0)^{-2^m} y^{1+2^{m-1}k}
    = (y^{2^mk}z_0^{2^m})^{-1} y^{1+2^{m-1}k}
    = z_0^{-2^m} y^{-2^mk} y^{1+2^{m-1}k}
    = z_0^{-2^m} y^{\beta + 2^{m-1}k},
    \]
    as required. Regarding the sixth relation, we have
    \[
    (x_0z_0)^{2^{3m-1}}
    = z_0^{2^{3m-1}} x_0^{\alpha (1 + \alpha + \cdots + \alpha^{2^{3m-1}-1})}
    = z_0^{2^{3m-1}}
    = 1,
    \]
    since $(\alpha^{2^{3m-1}}-1)/(\alpha-1)\equiv 0\mod 2^{3m-1}$. In regards to the fifth relation, we have
    \[
    (x_0z_0)^{2^{2m-1}}
    = z_0^{2^{2m-1}} x_0^{\alpha (1 + \alpha + \cdots + \alpha^{2^{2m-1}-1})} 
    = z_0^{2^{2m-1}} x_0^{2^{2m-1} - 2^{3m-2}}=x_0^{2^{2m-1}},
    \]
    as $\alpha(\alpha^{2^{2m-1}}-1)/(\alpha-1)\equiv 2^{2m-1} - 2^{3m-2}\mod 2^{3m-1}$ by Lemma \ref{valdif2}.
    We next deal with the fourth relation. We have
    \[
    (y^kz_0)^{2^{2m-1}}
    = y^{k(1+\alpha +\cdots +\alpha^{2^{2m-1}-1})}z_0^{2^{2m-1}}
    = z_0^{2^{2m-1}}
    = x_0^{2^{3m-2}},
    \]
		using once again that $(\alpha^{2^{2m-1}}-1)/(\alpha-1)\equiv  0\mod 2^{2m-1}$. On the other hand, we have
    \[
    (x_0z_0)^{2^{3m-2}}
    = z_0^{2^{3m-2}} x_0^{\alpha (1 + \alpha + \cdots + \alpha^{2^{3m-2}-1})}
    = z_0^{2^{3m-2}} x_0^{2^{3m-2} - 2^{4m-3}}
    = x_0^{2^{4m-3}} x_0^{2^{3m-2} - 2^{4m-3}}
    = x_0^{2^{3m-2}}.
    \]
    Indeed, setting $\gamma=\alpha^{2m-1}$, then $\gamma-1\equiv 0\mod 2^{3m-1}$, so 
		$(\gamma^{2^{m-1}}-1)/(\gamma-1)\equiv 2^{m-1}\mod 2^{3m-1}$, and therefore 
		$$
	\alpha\frac{\alpha^{2^{3m-2}}-1}{\alpha-1}\equiv \alpha  \frac{\alpha^{2^{m-1}}-1}{\alpha-1} \frac{\gamma^{2^{m-1}}-1}{\gamma-1}
	\equiv (2^{2m-1} - 2^{3m-2})2^{m-1}\equiv 2^{3m-2} - 2^{4m-3}\mod 2^{3m-1}.
	$$
		
	It remains to verify that the second relation is preserved.	From $z_0yz_0^{-1}=y^\alpha$, we infer
	$y^{-1}z_0 y=y^{\alpha-1}z_0$, so $z_0^y=y^{2^m k} z_0$ and therefore $z_0^{y^k}=y^{2^m k^2} z_0$.
	Moreover, from $x_0^{-1}yx_0=z^{-2}y^{1+2^{m-1}k}$, we deduce
	$y^{-1}x_0^{-1} y=z^{-2}y^{2^{m-1}k}x_0^{-1}$, hence $x_0^{y}=x_0y^{-2^{m-1} k} z^2$, and therefore
	$x_0^{y^k}=x_0y^{-2^{m-1} k^2} z^{2k}$. Thus
    \begin{align*}
        (x_0z_0)^{y^kz_0}
        &=(x_0 y^{-2^{m-1}k^2} z^{2k} y^{2^mk^2}z_0)^{z_0}
        = (x_0 y^{2^{m-1}k^2} z_0^\alpha)^{z_0}
        =x_0^\alpha y^{\beta 2^{m-1}k^2} z_0^\alpha\\
        &= x_0^{\alpha - 2^{2m-1}\beta k^2} z_0^\alpha
        = z_0^\alpha z_0^{-\alpha} x_0^{\alpha - 2^{2m-1}\beta k^2} z_0^\alpha
        = z_0^\alpha x_0^{\alpha^\alpha (\alpha - 2^{2m-1}\beta k^2)}.
    \end{align*} 
    Here
    $$
		\alpha^\alpha\equiv 1+\alpha(\alpha-1)\equiv 1 + 2^mk + 2^{2m}k^2\mod 2^{3m-1},
    $$
    and
    \[
    \alpha - 2^{2m-1}\beta k^2
    \equiv 1 + 2^mk - 2^{2m-1}k^2
    \mod 2^{3m-1},
    \] 
    so
    \[
    \alpha^\alpha (\alpha - 2^{2m-1}\beta k^2)
    \equiv (1 + 2^mk + 2^{2m}k^2)(1 + 2^mk - 2^{2m-1}k^2)
    \equiv 1 + 2^{m+1}k + 3\times 2^{2m-1}k^2\mod 2^{3m-1}.
    \] 
    It follows that
    $$(x_0z_0)^{y^kz_0} = z_0^\alpha x_0^{1 + 2^{m+1}k + 3\times 2^{2m-1}k^2}.$$
    
    On the other hand, we have
    $
    (x_0z_0)^\alpha
    = z_0^\alpha x_0^{\alpha (1 + \alpha + \cdots + \alpha^{\alpha-1})},
    $
    where
		$$
		\frac{\alpha^\alpha-1}{\alpha-1}\equiv \alpha+\alpha(\alpha-1)^2/2\equiv 1 + 2^mk + 2^{2m-1}k^2\mod 2^{3m-1},
    $$
    so
    \[
    \alpha \frac{\alpha^\alpha-1}{\alpha-1}\equiv\alpha (1 + 2^mk + 2^{2m-1}k^2)
    \equiv 1 + 2^{m+1}k + 3\times 2^{2m-1}k^2\mod 2^{3m-1},
    \]
    and therefore
    $$(x_0z_0)^\alpha = z_0^\alpha x_0^{1 + 2^{m+1}k + 3\times 2^{2m-1}k^2},$$
    as required. Thus the given assignment extends to an endomorphism $\Pi$ of $\langle x_0,y,z_0\rangle$, which is
		clearly surjective and hence an automorphism $\Pi$ of $\langle x_0,y,z_0\rangle$. We next verify that $\Pi^{2^m}$ acts
		as conjugation by $y$ on $x_0,y,z_0$. This is obvious for $y$. As for $z_0$, we have 
   $z_0^{\Pi^{2^m}} = y^{2^m k}z_0=z_0^y$, as computed earlier. Regarding $x_0$, we easily see by
   induction that
    $x_0^{\Pi^n}
    = x_0 y^{k(\alpha + 2\alpha^2 + 3\alpha^3 + \cdots + (n-1)\alpha^{n-1})} z_0^n$
    for all $n\in\N$, where the indicated sum has $n-1$ terms and is equal to 0 when $n=1$. Now  
		$\alpha^i\equiv (1+(\alpha-1))^i\equiv 1+i(\alpha-1)\mod 2^{2m-1}$ for all $i\in\N$, so
		$$
		\alpha + 2\alpha^2 + 3\alpha^3 + \cdots + (n-1)\alpha^{n-1}\equiv 1+\cdots +(n-1)+(\alpha-1)(1^2+\cdots +(n-1)^2)\mod 2^{2m-1},
		\quad n\in\N,
		$$
		that is,
		$$
    \alpha + 2\alpha^2 + 3\alpha^3 + \cdots + (n-1)\alpha^{n-1}\equiv \frac{(n-1)n}{2} + (\alpha-1)
		\frac{(n-1)n(2(n-1)+1)}{6},\quad n\in\N.
		$$
		In particular, for $n=2^m$, we have
		\begin{align*}
		\alpha + 2\alpha^2 + 3\alpha^3 + \cdots + (2^m-1)\alpha^{2^m-1} &\equiv 2^{2m-1} - 2^{m-1} + 2^{2m-1}k\frac{(2^m-1)(2^{m+1}-1)}{3}\\
    &\equiv -2^{m-1}\mod 2^{2m-1}.
    \end{align*}
    It follows that
    $x_0^{\Pi^{2^m}}= x_0 y^{-2^{m-1}k} z_0^{2^m} = x_0^y$, as computed earlier.
		
		This produces the required extension, where $\Pi$ is conjugation by $y_0$. We already had $x_0^{z_0}=x_0^\alpha$.
		Moreover, the new relation $z_0^{y_0}=y^k z_0=y_0^{2^m k}z_0=y_0^{\alpha-1}z_0$ is equivalent to ${}^{z_0} y_0=y_0^{\alpha}$.
		Furthermore, from $x_0^{y_0}=x_0 z_0$ we infer $[x_0,y_0]=z_0$, so $\langle x_0,y_0,z_0\rangle=\langle x_0,y_0\rangle$.
		As $x_0^{2^{3m-1}}=1=y_0^{2^{3m-1}}$ the proof is complete.
    \end{proof}

Suppose finally that we are in Case 3.
It turns out that $\langle  A^{3}, B^{9}, C^{3}\rangle$ is a normal subgroup of $J$ of order $3^6$.
The defining relations of $J$ allow us to see how $A$, $B$, and $C$ act on $\langle  A^{3}, B^{9}, C^{3}\rangle$
by conjugation, which prompts the construction of the model of $J$ below.

\begin{theorem}\label{model3} Suppose we are in Case 3. Then there is a model of $J$.
\end{theorem}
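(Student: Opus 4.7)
The strategy is to adapt the successive cyclic-extension approach of Theorems \ref{model} and \ref{model2} to the arithmetic of Case 3, where $\alpha=1+3k$ with $k\equiv -1\mod 3$. I will take as starting point an explicit abelian group $\langle x,y,z\rangle$ of order $3^6$ modeling the subgroup $\langle A^3,B^9,C^3\rangle$ of $J$, so that $x,y,z$ correspond to $A^3,B^9,C^3$ respectively, with defining relations such as $xy=yx$, $xz=zx$, $yz=zy$, $x^{27}=1$, $x^{9}y^{3}=1$, and $z^{9}=1$ (the relation $x^{9}y^{3}=1$ coming from $A^{27}B^{27}=1$).

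On this base I will build four successive cyclic extensions of order $3$, each realized through the standard machinery stated at the start of the appendix: at every step, exhibit an automorphism of the current group that fixes a central element $t$ and whose cube is conjugation by $t$, and then extend by a new generator. The four extensions will adjoin, in turn: $z_0$ with $z_0^3=z$ (modeling $C^3\to C$), then $x_0$ with $x_0^3=x$ (modeling $A^3\to A$), then $y_0$ with $y_0^3=y$ (modeling $B^9\to B^3$), and finally $y_1$ with $y_1^3=y_0$ (modeling $B^3\to B$). This yields a group $\langle x_0,y_1,z_0\rangle$ of order $3^{10}$; taking $x_0$ and $y_1$ as the images of $A$ and $B$ and verifying that the Macdonald relations $x_0^{[x_0,y_1]}=x_0^\alpha$ and $y_1^{[y_1,x_0]}=y_1^\alpha$ hold will confirm that this group is a model of $J$.

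The explicit automorphism assignments will be direct analogs of those in Theorem \ref{model}, modified to account for $p=3$, $m=1$, and $k\equiv -1\mod 3$. For instance, the automorphism producing $x_0$ will send $y\mapsto z^{-1}y^{1+b}$ and $z_0\mapsto z_0 x^{-k}$ for a suitable integer $b$, while the automorphism producing $y_1$ will require further corrective factors to accommodate the compounded effect of the two consecutive $y$-steps. The order-3 cyclic extension by $z_0$ is straightforward since $z$ is central and $\alpha^3\equiv 1\mod 9$ (because $\alpha\equiv 7\mod 9$), so $x\mapsto x^\alpha$, $y\mapsto y^\beta$, $z\mapsto z$ works.

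The main obstacle, exactly as in Theorems \ref{model} and \ref{model2}, will be the identity verifications at each extension: that each assignment respects all defining relations accumulated so far, and that the cube of each automorphism equals the claimed inner conjugation. These reduce to congruences modulo suitable powers of $3$, obtained by expanding $\alpha=1+3k$ via the binomial theorem and using the formulas from Lemma \ref{valdif}. The delicate point is precisely the one highlighted in Proposition \ref{calc1}: when $k\equiv -1\mod 3$, the factor $k+1$ contributes an extra $3$ to various valuations, so many congruences that hold trivially in Case~1 now require the explicit introduction of correction terms, of the same flavor as the quantity $c=k^2 3^{2m-1}$ used in Theorem \ref{model}, in the definitions of the automorphisms. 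Bookkeeping these correction terms through all four extensions, and in particular ensuring that the two consecutive $y$-extensions are mutually consistent (so that $\Pi_1^3$ is conjugation by $y_0$ and $\Pi_2^3$ is conjugation by $y$ on the already-enlarged group), is what I expect to be the hardest part of the argument.
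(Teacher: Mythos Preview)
Your plan has a genuine error at its very first step. You model $\langle A^3,B^9,C^3\rangle$ by an \emph{abelian} group with the relation $xz=zx$, but in Case~3 this subgroup is not abelian. Indeed $A^C=A^\alpha$ gives $(A^3)^{C^3}=(A^3)^{\alpha^3}$, and with $\alpha=1+3k$, $k\equiv -1\bmod 3$, one computes $\alpha^3\equiv 1+9k\equiv -8\not\equiv 1\bmod 27$; since $A^3$ has order $27$, this means $C^3$ does not centralize $A^3$. The congruence $\alpha^3\equiv 1\bmod 9$ that you invoke is true but irrelevant: the modulus that matters is the order of $x$, namely $27$. Consequently your first cyclic extension (by $z_0$, with the cube of the automorphism supposed to be the identity) cannot work as stated, and the Case~1 formulas $y\mapsto z^{-1}y^{1+b}$, $z_0\mapsto z_0x^{-k}$ will not verify against an abelian base.

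The paper's construction confronts this head-on: its base group $\langle x,y,z\rangle$ carries the nontrivial relation $x^z=x^{-8}$ (a semidirect product $(\Z/27\Z\times\Z/3\Z)\rtimes\Z/9\Z$), and the extension order is different---first $y_0$, then $z_0$, then $x_0$, then $y_1$. The automorphism producing $x_0$ sends $y_0\mapsto z_0^{-3}y_0^{-2}$ rather than anything of the shape $z^{-1}y^{1+b}$, and the check that $\Omega_2^3$ is conjugation by $z$ uses precisely $\alpha^3\equiv -8\bmod 27$. Your outline would need to be rebuilt from a non-abelian base and with automorphism formulas genuinely tailored to Case~3; the Case~1 template does not transfer by inserting correction terms alone.
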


\begin{proof} We have $\alpha=1+3k$, where $k\in\N$ and $k\equiv -1\mod 3$.
We start with a group $\langle x,y,z\rangle$ of order $3^6$ generated by elements $x,y,z$ subject to defining relations:
    \[
    x^{27}=1,\;
		xy=yx,\;
    x^9y^3=1,\;
    z^9=1,\;
    x^z=x^{-8},\;
    yz=zy.
    \]
Note that $\langle x,y,z\rangle$ is a semidirect product of $\Z/27\Z\times \Z/3\Z$ by $\Z/9\Z$, where $[z,x^3]=1=[z^3,x]$.

We next construct a cyclic extension $\langle x,y_0,z\rangle$ 
of $\langle x,y,z\rangle$ of order $3^7$, where
$y_0^{3}=y$, by means of an automorphism $\Omega_1$ 
of $\langle x,y,z\rangle$ that fixes $y$ and such that
$\Omega_1^{3}$ is conjugation by~$y$, that is, the trivial automorphism. For this purpose,
we consider the assignment
$$
x\mapsto x^{-8} z^3,\; y\mapsto y,\; z\mapsto y^{-3}z.
$$
All defining relations of $\langle x,y,z\rangle$ are obviously preserved. Thus
the above assignment extends to an endomorphism $\Omega_1$ of $\langle x,y,z\rangle$, which is clearly surjective
and hence an automorphism. We next verify that $\Omega_1^{3}$ agrees with conjugation by $y$ on $x$, $y$, and $z$.
This is obviously true for $y$ and $z$, and noting that $\Omega_1$ fixes $z^3$, we find that it is also true for $x$.
This produces the required extension, 
where $\Omega_1$ is conjugation by $y_0$. We readily verify that $\langle x,y_0,z\rangle$
has defining relations:
$$
    x^{27}=1,\;
		x^{y_0}=x^{-8}z^3,\;
    x^9y_0^9=1,\;
    z^9=1,\;
    x^z=x^{-8},\;
    z^{y_0}=y_0^{-9}z.
$$
We next construct a cyclic extension $\langle x,y_0,z_0\rangle$ 
of $\langle x,y_0,z\rangle$ of order $3^8$, where
$z_0^{3}=z$, by means of an automorphism $\Omega_2$ 
of $\langle x,y_0,z\rangle$ that fixes $z$ and such that
$\Omega_2^{3}$ is conjugation by~$z$. With this goal in mind,
we consider the assignment
$$
x\mapsto x^{\alpha},\; y_0\mapsto y_0^\beta,\; z\mapsto z,
$$
where $\beta\in\Z$ satisfies $\alpha\beta\equiv 1\mod 27$. 
It is easy to see that all defining relations of $\langle x,y_0,z\rangle$ are preserved. Thus
the above assignment extends to an endomorphism $\Omega_2$ of $\langle x,y_0,z\rangle$, which is clearly surjective
and hence an automorphism. We next verify that $\Omega_2^{3}$ agrees with conjugation by $z$ on $x$, $y_0$, and $z$.
This is obviously true for $z$, and noting that $\alpha^3\equiv -8\mod 27$ and $\beta^3\equiv 10\mod 27$,
we find that it is also true for $x$ and $y$.
This produces the required extension, 
where $\Omega_2$ is conjugation by $z_0$. We readily verify that $\langle x,y_0,z_0\rangle$
has defining relations:
$$
    x^{27}=1,\;
		x^{y_0}=x^{-8}z_0^9,\;
    x^9y_0^9=1,\;
    z_0^{27}=1,\;
    x^{z_0}=x^{\alpha},\;
    {}^{z_0}y_0 = y_0^\alpha.
$$
We next construct a cyclic extension $\langle x_0,y_0,z_0\rangle$ 
of $\langle x,y_0,z_0\rangle$ of order $3^9$, where
$x_0^{3}=x$, by means of an automorphism $\Omega_3$ 
of $\langle x,y_0,z_0\rangle$ that fixes $x$ and such that
$\Omega_3^{3}$ is conjugation by~$x$. For this purpose,
we consider the assignment
$$
x\mapsto x,\; y_0\mapsto z_0^{-3}y_0^{-2},\; z_0\mapsto z_0 x^{-k}.
$$
The first, fourth and fifth defining relations of $\langle x,y_0,z_0\rangle$ are easily seen
to be preserved. As for the third, as $z$ commutes with $y$, it suffices to verify that $(z^{-1} y_0)^9=y_0^9$.
As indicated earlier, $y_0^z=y_0^{10}$, so indeed $(z^{-1} y_0)^9=y_0^{10+10^2+\cdots+10^9}z^{-9}=y_0^9$,
since $10(10^9-1)/9\equiv 9\mod 27$.

Regarding the second relation, we need to verify that $x^{z_0^{-3}y_0^{-2}}=x^{-8}(z_0 x^{-k})^9$. 
As $z$ normalizes~$\langle x\rangle$, $y$ commutes with $x$, and $z^3$ commutes with $x$, we have
$$
x^{z_0^{-3}y_0^{-2}}=x^{z^{-1} y_0}=(x^{10})^{y_0}=(x^{y_0})^{10}=(x^{-8}z^3)^{10}=xz^3.
$$
On the other hand, 
$$
(z_0 x^{-k})^9=z_0^9 x^{-k(1+\alpha+\cdots+\alpha^8)}=z^3 x^{-9k}=x^{-9k}z^3,
$$
since $(\alpha^9-1)/(\alpha-1)\equiv 9\mod 27$. Thus
$$
x^{-8}(z_0 x^{-k})^9=x^{-8}x^{-9k}z^3=x x^{-9-9k}z^3=xz^3,
$$
as $9(k+1)\equiv 0\mod 27$. This proves that the second relation is preserved.

In regards to the sixth relation, we need to verify that ${}^{z_0 x^{-k}}(z_0^{-3}y_0^{-2}) = (z_0^{-3}y_0^{-2})^\alpha$.
We have
$$
(z_0^{-3}y_0^{-2})^\alpha=(z^{-1}y_0y_0^{-3})^\alpha=(z^{-1}y_0)^\alpha y^{-\alpha}.
$$
Here
$$
(z^{-1}y_0)^\alpha=y_0^{10(1+10+\cdots+10^{\alpha-1})}z^{-\alpha}=y_0^{10\alpha}z^{-\alpha},
$$
since $(10^\alpha-1)/9\equiv \alpha\mod 27$. Since $k\equiv -1\mod 3$, we infer
$$
(z_0^{-3}y_0^{-2})^\alpha=y_0^{10\alpha}z^{-\alpha}y^{-\alpha}=y_0^{7\alpha}z^{-\alpha}=y_0^{7\alpha}z^{2}.
$$
On the other hand, from $x^{y_0}=x^{-8}z^3$ we deduce $(y_0^{-1})^x=x^{-9}z^3y_0^{-1}$. Here $y_0,x^3,z^3$
commute with each other, so $y_0^x=y_0x^{9}z^{-3}$. As $x$ commutes with $z^3$ and 
$k\equiv -1\mod 3$, it follows that $^{x^{-k}} y_0=y_0^{x^k}=y_0x^{-9}z^{3}$. In addition, from $x^z=x^{-8}$,
we deduce ${}^{x^{-1}} z^{-1}=x^{-9}z^{-1}=z^{-1}x^{-9}$, so ${}^{x^{-k}} z^{-1}=z^{-1}x^{-9k}=z^{-1}x^9$. Therefore,
$^{x^{-k}} (z_0^{-3}y_0^{-2})=z^{-1} x^9 y_0^{-2} x^{-9}z^{3}=y_0^{7}z^{2}$, whence 
${}^{z_0 x^{-k}} (z_0^{-3}y_0^{-2})=y_0^{7\alpha}z^2$. This demonstrates that the sixth relation is preserved.
Thus the above assignment extends to an endomorphism $\Omega_3$ of $\langle x,y_0,z_0\rangle$, which is clearly surjective
and hence an automorphism. We next verify that $\Omega_3^{3}$ agrees with conjugation by $x$ on $x$, $y_0$, and $z_0$.
This is obvious for $x$. Moreover, $\Omega_3^{3}$ sends $z_0$ to $z_0x^{-3k}=z_0x^{1-\alpha}$, and the given relation
$x^{z_0}=x^\alpha$ is equivalent to $z_0^x=z_0 x^{1-\alpha}$. We claim that $\Omega_3^{3}$ sends $y_0$ to
$y_0^x=x^9 z^{-3}y_0 $. Indeed, the hypothesis $k\equiv -1\mod 3$ implies $1+\alpha+\alpha^2\equiv 3\mod 27$, so
$$
z^{\Omega_3}=(z_0^3)^{\Omega_3}=(z_0 x^{-k})^3=z_0^3 x^{-k(1+\alpha+\alpha^2)}=z_0^3 x^{-3k}=z x^{-3k},
$$
and therefore $(z^{-1})^{\Omega_3}=z^{-1}x^{3k}$. By definition, we also have $y_0^{\Omega_3}=z^{-1}y_0^{-2}$, so
$$
y_0^{\Omega_3^2}=z^{-1}x^{3k}(z^{-1}y_0^{-2})^{-2}=z^{-1}x^{3k}(y_0^{2}z)^2=x^{3k}(y_0^2)^z y_0^2 z=x^{3k}y_0^{-5}z,
$$
$$
y_0^{\Omega_3^3}=x^{3k}(z^{-1}y_0^{-2})^{-5}zx^{-3k}=x^{3k}(y_0^{2}z)^{5}zx^{-3k}=x^{3k}z^5 y_0^{2\times 10(1+10+\cdots+10^4)}zx^{-3k}=
x^{3k}z^5 y_0^{10}zx^{-3k},
$$
since $(10^5-1)/9\equiv 14\mod 27$, so $2\times 10\times 14\equiv 10\mod 27$. As $x^9y_0^9=1$ and $[x^3,z]=1=[x^3,y_0]$, we infer
$$
y_0^{\Omega_3^3}=z^5 y_0^{10}z=z^5 y_0^9 y_0 z=x^{-9}z^6 y_0^z=x^{-9}z^6 y_0^{10}=x^9 z^{-3}y_0,
$$
as required. This produces the required extension, 
where $\Omega_3$ is conjugation by $x_0$. We see that $\langle x_0,y_0,z_0\rangle$
has defining relations:
$$
    x_0^{81}=1,\;
		y_0^{x_0}=z_0^{-3}y_0^{-2},\;
    x_0^{27}y_0^9=1,\;
    z_0^{27}=1,\;
    x_0^{z_0}=x_0^{\alpha},\;
    {}^{z_0}y_0 = y_0^\alpha.
$$
Observe that $x_0^{z_0}=x_0^{\alpha}$ is equivalent to $z_0^{x_0}=z_0x_0^{1-\alpha}=z_0 x_0^{-3k}=z_0 x^{-k}$, and
that $y_0^{x_0}=z_0^{-3}y_0^{-2}$ implies $(x_0^3)^{y_0}=x^{y_0}=x^{-8}z_0^9=x_0^{-24}z_0^9$. Indeed, we have just seen that
$x_0^{-3}y_0 x_0^3=y_0^{x_0^3}=y_0^x=y_0 z_0^{-9} x^9=y_0 z_0^{-9} x_0^{27}$, so $(x_0^{-3})^{y_0}=z_0^{-9}x_0^{24}$.

We finally construct a cyclic extension $\langle x_0,y_1,z_0\rangle$ 
of $\langle x_0,y_0,z_0\rangle$ of order $3^{10}$, where
$y_1^{3}=y_0$, by means of an automorphism $\Omega_4$ 
of $\langle x_0,y_0,z_0\rangle$ that fixes $y_0$ and such that
$\Omega_4^{3}$ is conjugation by~$y_0$. With  this goal in mind,
we consider the assignment
$$
x_0\mapsto x_0 z_0,\; y_0\mapsto y_0,\; z_0\mapsto y_0^{k}z_0.
$$
We proceed to show that the defining  relations of $\langle x_0,y_0,z_0\rangle$ are preserved.

We have $(x_0z_0)^{81}=z_0^{81}x_0^{\alpha(\alpha^{81}-1)/(\alpha-1)}=1$, because $(\alpha^{81}-1)/(\alpha-1)\equiv 0\mod 81$.

We next claim that $y_0^{x_0z_0}=(y_0^k z_0)^{-3}y_0^{-2}$. Indeed, on the one hand, we have 
$y_0^{x_0z_0}=(z_0^{-3}y_0^{-2})^{z_0}=z_0^{-3}y_0^{-2\beta}$, and on the other hand, since $1+\alpha+\alpha^2\equiv 3\mod 27$,
we have $(y_0^k z_0)^{3}=y_0^{k(1+\alpha+\alpha^2)}z_0^3=y_0^{3k}z_0^3$, so $(y_0^k z_0)^{-3}y_0^{-2}=z_0^{-3}y_0^{-3k-2}$,
and we are left to show that $2\beta\equiv 3k+2\mod 27$ or, equivalently, $(3k+2)(3k+1)\equiv 2\mod 27$,
which is true because $k\equiv -1\mod 3$.

We next prove that $(x_0z_0)^{27}y_0^9=1$. This is equivalent to  $(x_0z_0)^{27}=x_0^{27}$, which is true, since
$(x_0z_0)^{27}=z_0^{27}x_0^{\alpha(\alpha^{27}-1)/(\alpha-1)}$, $z_0^{27}=1$, and $(\alpha^{27}-1)/(\alpha-1)\equiv 27\mod 81$.

We next show that $(y_0^k z_0)^{27}=1$. We already computed $(y_0^k z_0)^{3}=y_0^{3k}z_0^3$, where $y_0^3=y$ and $z_0^3=z$
commute, so $(y_0^k z_0)^{27}=(y_0^{3k}z_0^3)^9=y_0^{27k}z_0^{27}=1$.

Next, from ${}^{z_0}y_0 = y_0^\alpha$, we derive ${}^{y_0^kz_0}y_0 = y_0^\alpha$, so the sixth relation is also preserved.

As for the fifth and last relation, we need to see that $(x_0z_0)^{y_0^k z_0}=(x_0z_0)^{\alpha}$. We have,
$$
(x_0z_0)^{\alpha}=z_0^\alpha x_0^{\alpha(1+\alpha+\cdots+\alpha^{\alpha-1})}.
$$
To find a formula for $x_0^{y_0}$, we start with $x_0^{-1}y_0x_0=y_0^{x_0}=z_0^{-3}y_0^{-2}$, and derive
$$
(x_0^{-1})^{y_0}x_0=y_0^{-1}x_0^{-1}y_0x_0=y_0^{-1}z_0^{-3}y_0^{-2}=y_0^{-1}z_0^{-3}y_0y_0^{-3}=(z^{-1})^{y_0}y^{-1}.
$$
Here we use the given relation $z^{y_0}=y_0^{-9}z=y^{-3}z$ to deduce $(z^{-1})^{y_0}=z^{-1}y^3$, where $y$ and $z$ commute.
Thus
$$
(x_0^{-1})^{y_0}=(z^{-1})^{y_0}y^{-1}x_0^{-1}=z^{-1}y^2x_0^{-1},
$$
$$
x_0^{y_0}=x_0y^{-2}z.
$$
Repeatedly using $x_0^{y_0}=x_0y^{-2}z$, $z^{y_0}=y^{-3}z$, $y^9=1$, and $[y,z]=1$, we find that
$$
x_0^{y_0}=x_0y^{7}z,\; x_0^{y_0^2}=x_0y^{2}z^2,\; x_0^{y_0^3}=x_0y^{3}z^3,\; x_0^{y_0^4}=x_0y z^4,\;
x_0^{y_0^5}=x_0y^5 z^5,
$$
$$
x_0^{y_0^6}=x_0y^{6}z^6,\; x_0^{y_0^7}=x_0y^{4}z^7,\; x_0^{y_0^8}=x_0y^{8}z^8,\; x_0^{y_0^9}=x_0,
$$
where the last relation is compatible with $x_0^{27}y_0^9=1$. Since $k\in\N$ and $k\equiv -1\mod  3$, we deduce
$$
x_0^{y_0^k}=x_0 y^k z^k.
$$

On the other hand, from $z_0 y_0 z_0^{-1}={}^{z_0} y_0=y_0^\alpha$, we derive  
$y_0^{-1}z_0 y_0 z_0^{-1}=y_0^{\alpha-1}$, so $z_0^{y_0}=y_0^{\alpha-1}z_0$, and therefore
$$
z_0^{y_0^k}=y_0^{(\alpha-1)k}z_0=y_0^{3k^2}z_0=y^{k^2}z_0.
$$
It follows that
$$
(x_0z_0)^{y_0^k}=x_0 y^{k} z^k y^{k^2}z_0=x_0 y^{k(k+1)} z_0^\alpha,
$$
$$
(x_0z_0)^{y_0^k z_0}=(x_0 y^{k(k+1)} z_0^\alpha)^{z_0}=x_0^\alpha y^{k(k+1)\beta} z_0^\alpha.
$$
Here $k+1\equiv 0\mod 3$, $\beta\equiv 1\mod 3$, and $y^9=1$, so
$$
(x_0z_0)^{y_0^k z_0}=x_0^\alpha y^{k(k+1)} z_0^\alpha.
$$
Now $k+1=3u$ with $u\in\N$. Making use of relations $x^9y^3=1$, $x_0^3=x$, and $y^9=1$, we see that
$$
y^{k(k+1)}=y^{(-1+3u)3u}=y^{-3u}=x^{9u}=x_0^{27u},
$$
$$
(x_0z_0)^{y_0^k z_0}=x_0^\alpha x_0^{27 u} z_0^\alpha.
$$
Thus, we are left to show that
\begin{equation}
\label{prr}
z_0^\alpha x_0^{\alpha(1+\alpha+\cdots+\alpha^{\alpha-1})}=x_0^\alpha x_0^{27 u} z_0^\alpha.
\end{equation}
From $x_0^{z_0}=x_0^\alpha$, $\alpha\equiv 1\mod 3$, and $x_0^{81}=1$, we see that $[x_0^{27},z_0]=1$. Thus (\ref{prr})
is equivalent to
$$
x_0^{\alpha(1+\alpha+\cdots+\alpha^{\alpha-1})}=x_0^{\alpha \alpha^\alpha} x_0^{27 u},
$$
that is,
$$
x_0^{-27 u}=x_0^{\alpha\gamma}.
$$
Assume first $v_3(k+1)\geq 2$. Then $3|u$, so $x_0^{-27 u}=1$. Moreover, $v_3(\gamma)\geq 4$ by Proposition~\ref{calc1},
so $x_0^{\alpha\gamma}=1$ as well. Suppose next that $v_3(k+1)=1$. Then $v_3(\gamma)=3$ with
$\gamma=27t$, $t\in\N$, and $t\equiv -u\mod 3$ by Proposition \ref{calc1}, so 
$\alpha\gamma\equiv (1+3k)27t\equiv 27t\equiv -27u\mod 81$, and $x_0^{\alpha\gamma}=x_0^{-27u}$. 
This proves that the fifth defining relation of $\langle x_0,y_0,z_0\rangle$ is preserved.
Thus the above assignment extends to an endomorphism $\Omega_4$ of $\langle x_0,y_0,z_0\rangle$, which is clearly surjective
and hence an automorphism. We next verify that $\Omega_4^{3}$ agrees with conjugation by $y_0$ on $x_0$, $y_0$, and $z_0$.
This is obvious for $y_0$. Moreover, from $z_0^{\Omega_4}=y_0^k z_0$, we infer $z_0^{\Omega_4^3}=
y_0^{3k} z_0=y_0^{\alpha-1} z_0$, which is indeed equal to $z_0^{y_0}$ as indicated earlier. 
Applying the definition of $\Omega_4$ twice yields 
$$x_0^{\Omega_4^2}=x_0z_0y_0^k z_0=x_0y_0^{k\alpha}z_0^2,$$
and a third application gives
$$x_0^{\Omega_4^3}=x_0z_0y_0^{k\alpha}(y_0^{k}z_0)^2=x_0y_0^{k(\alpha+2\alpha^2)} z.$$
On the other hand, we computed earlier $x_0^{y_0}=x_0 y^{-2}z$, and we are reduced to demonstrate that
$k(\alpha+2\alpha^2)\equiv -6\mod 27$. Making the substitution $\alpha=1+3k$, with $k=-1+3u$,
we see that this congruence holds. This produces the required extension, 
where $\Omega_4$ is conjugation by $y_1$. Thus $x_0^{y_1}=x_0 z_0$, so $z_0=[x_0,y_1]$,
which implies that $\langle x_0,y_1,z_0\rangle=\langle x_0,y_1\rangle$. Moreover,
we have $x_0^{z_0}=x_0^{z_0}$ and $y_1^{-1}z_0 y_1=z_0^{y_1}=y_0^k z_0=y_1^{3k}z_0$, so
$z_0y_1z_0^{-1}=y_1^{1+3k}$, that is, ${}^{z_0} y_1=y_1^\alpha$. Finally, we also have
$x_0^{81}=1=y_1^{81}$, with $|\langle x_0,y_1\rangle|=3^{10}$, so the proof is complete.
\end{proof}

\noindent{\bf Acknowledgments.} We thank Volker Gebhardt for his help with Magma computations,
and Andrea Previtali and the referee for a careful reading of the paper and useful comments.


\end{document}